\documentclass[11pt, reqno]{amsart}
\frenchspacing 


\usepackage[usenames,dvipsnames]{xcolor}
\definecolor{darkblue}{rgb}{0,0,.75}

\usepackage{amsfonts,amssymb,enumerate,bm,hhline,makecell,array,mathtools}
\usepackage{mathrsfs}
\usepackage{comment}
\usepackage[normalem]{ulem}
\usepackage[colorlinks=true, citecolor=darkblue, urlcolor=darkblue, linkcolor=darkblue, linktocpage = true]{hyperref}
\usepackage{amscd}
\usepackage[shortlabels]{enumitem}
\setlist[itemize]{topsep=5pt,itemsep=3pt}
\setlist[enumerate]{topsep=5pt,itemsep=3pt}
\usepackage{tensor}

\usepackage{tikz}
\usetikzlibrary{calc,trees,arrows,chains,shapes.geometric,%
	decorations.pathreplacing,decorations.pathmorphing,shapes,%
	matrix,shapes.symbols}
\tikzset{
	>=stealth',
	punktchain/.style={
		rectangle,
		rounded corners,
		draw=black, thick,
		minimum height=3em,
		text centered,
		on chain},
	line/.style={draw, thick, <-},
	element/.style={
		tape,
		top color=white,
		bottom color=blue!50!black!60!,
		minimum width=8em,
		draw=blue!40!black!90, very thick,
		text width=10em,
		minimum height=3.5em,
		text centered,
		on chain},
	every join/.style={->, thick,shorten >=1pt},
	decoration={brace},
	tuborg/.style={decorate},
	tubnode/.style={midway, right=2pt},
}
\usepackage{tikz-cd}
\usetikzlibrary{calc,matrix,patterns,shadings,backgrounds,fit}
\pgfdeclarelayer{myback}
\pgfsetlayers{myback,background,main}

\usepackage{extarrows}
\usepackage[new]{old-arrows}
\usepackage[toc,page]{appendix}
\usepackage[all,ps,cmtip,rotate]{xy}


\allowdisplaybreaks


\usepackage[top=3.75cm, bottom=3cm, left=3cm, right=3cm]{geometry}
\frenchspacing 



\newcommand{\rK}{\mathrm{K}}


\def\fS{\mathfrak{S}}


\def\cD{\mathscr{D}}
\def\cA{\mathscr{A}}

\def\cF{\mathscr{F}}

\def\cO{\mathscr{O}}

\def\cP{\mathscr{P}}
\def\cH{\mathscr{H}}

\def\cT{\mathscr{T}}


\def\C{\ensuremath{\mathbb{C}}}

\def\H{\ensuremath{\mathbb{H}}}

\def\Q{\ensuremath{\mathbb{Q}}}
\def\R{\ensuremath{\mathbb{R}}}
\def\Z{\ensuremath{\mathbb{Z}}}



\def\bv{\ensuremath{\mathbf{v}}}


\DeclareMathOperator{\Aut}{Aut}

\DeclareMathOperator{\ch}{ch}

\DeclareMathOperator{\Coh}{Coh}

\newcommand{\Db}{\mathrm{D}^{\mathrm{b}}}
\newcommand{\Dqc}{\mathrm{D_{qc}}}

\DeclareMathOperator{\Hilb}{Hilb}
\DeclareMathOperator{\Hom}{Hom}
\DeclareMathOperator{\id}{id}

\def\Im{\mathop{\rm Im}\nolimits}
\DeclareMathOperator{\Ker}{Ker}

\DeclareMathOperator{\Kum}{Kum}
\def\Jac{\mathrm{Jac}}

\DeclareMathOperator{\Pic}{Pic}

\def\Re{\mathop{\rm Re}\nolimits}

\DeclareMathOperator{\rk}{rk}

\DeclareMathOperator{\Supp}{Supp}

\DeclareMathOperator{\Spec}{Spec}
\DeclareMathOperator{\Stab}{Stab}
\DeclareMathOperator{\rank}{rank}

\DeclareMathOperator{\td}{td}

\newcommand{\st}{\mid} 

\DeclareMathOperator{\Knum}{\rK_{\mathrm{num}}}

\numberwithin{equation}{section}


\theoremstyle{plain}
\newtheorem{lemm}{Lemma}[section]
\newtheorem{theo}[lemm]{Theorem}
\newtheorem{coro}[lemm]{Corollary}
\newtheorem{prop}[lemm]{Proposition}

\newtheorem*{claim*}{Claim}

\theoremstyle{definition}
\newtheorem{defi}[lemm]{Definition}
\newtheorem{rema}[lemm]{Remark}

\newtheorem{exam}[lemm]{Example}



\def\blank{\underline{\hphantom{A}}}

\makeatletter
\@namedef{subjclassname@2020}{%
  \textup{2020} Mathematics Subject Classification}
\makeatother

\def\citestacks#1{\cite[\href{https://stacks.math.columbia.edu/tag/#1}{Tag #1}]{stacks-project}}

\DeclareRobustCommand\longtwoheadrightarrow{\relbar\joinrel\twoheadrightarrow}
     
\newcommand{\set}[1]{\left\{ \, #1 \, \right\}}
\def\abs#1{\left\lvert#1\right\rvert}


\usepackage{xpatch}
\makeatletter   
\xpatchcmd{\@tocline}
{\hfil\hbox to\@pnumwidth{\@tocpagenum{#7}}\par}
{\ifnum#1<0\hfill\else\dotfill\fi\hbox to\@pnumwidth{\@tocpagenum{#7}}\par}
{}{}
\makeatother 

\makeatletter 
\def\l@subsection{\@tocline{2}{0pt}{3pc}{6pc}{}} 
\makeatother


\title[Stability conditions on products and Hilbert schemes]{Stability conditions on products of curves and Hilbert schemes of surfaces}

\begin{document}

\author[C.~Li]{Chunyi Li}
\address{\parbox{0.9\textwidth}{Mathematics Institute (WMI), University of Warwick\\[1pt]
Coventry, CV4 7AL, United Kingdom
\vspace{1mm}}}
\email{C.Li.25@warwick.ac.uk}
\urladdr{\url{https://sites.google.com/site/chunyili0401/}}

\author[E.~Macr\`i]{Emanuele Macr\`i}
\address{\parbox{0.9\textwidth}{Universit\'e Paris-Saclay, CNRS, Laboratoire de Math\'ematiques d'Orsay\\[1pt]
Rue Michel Magat, B\^at. 307, 91405 Orsay, France
\vspace{1mm}}}
\email{{emanuele.macri@universite-paris-saclay.fr}}
\urladdr{\url{https://www.imo.universite-paris-saclay.fr/~macri/}}

\author[A.~Perry]{Alexander Perry}
\address{\parbox{0.9\textwidth}{Department of Mathematics, University of Michigan\\[1pt]
530 Church Street, Ann Arbor, MI 48109, USA
\vspace{1mm}}}
\email{arper@umich.edu}
\urladdr{\url{http://www-personal.umich.edu/~arper/}}

\author[P.~Stellari]{Paolo Stellari}
\address{\parbox{0.9\textwidth}{Dipartimento di Matematica ``F.~Enriques'', Universit\`a degli Studi di Milano\\[1pt]
Via Cesare Saldini 50, 20133 Milano, Italy
\vspace{1mm}}}
\email{paolo.stellari@unimi.it}
\urladdr{\url{https://sites.unimi.it/stellari}}

\author[X.~Zhao]{Xiaolei Zhao}
\address{\parbox{0.9\textwidth}{Department of Mathematics, University of California Santa Barbara\\[1pt]
552 University Rd, Santa Barbara, CA 93117, USA
\vspace{1mm}}}
\email{xlzhao@ucsb.edu}
\urladdr{\url{https://sites.google.com/site/xiaoleizhaoswebsite/}}


\begin{abstract}
We prove that stability conditions on the derived category of a product of curves of positive genus are uniquely determined by their central charge and the phase of skyscraper sheaves.
As an application, we construct stability conditions on Hilbert schemes of points on certain surfaces, including some K3 surfaces of Kummer type.
\end{abstract}

\maketitle

\setcounter{tocdepth}{2}
\tableofcontents


\section{Introduction}\label{sec:intro}

The theory of Bridgeland stability conditions \cite{Bridgeland:Stab} has been highly influential in the past two decades, with applications to several areas in algebraic geometry, ranging from moduli spaces (see e.g.~\cite{AB+:MMP,BM:MMP,LZ:MMP,Fey:Mukai,BLMNPS:Family}), to counting invariants (see e.g.~\cite{AF+:Quantum,KS:Stability,FT:DT,LR:Castelnuovo, LiuZ:Castelnuovo}), to quadratic differentials (see e.g.~\cite{BS:Quadratic,HKK:Flat}), and beyond. 
Despite these advances, it is still a difficult problem to construct stability conditions on the derived category of a high-dimensional projective variety. 
In the important case of varieties with trivial canonical bundle, the state of affairs is as follows: 
stability conditions have been constructed on all abelian threefolds and a number of Calabi--Yau threefolds \cite{MP:Abelian,BMS:StabCY3s,Li:CY3,FK+:CY3}, while in higher dimensions stability conditions are only known to exist on products of the known examples in dimension at most $3$ with elliptic curves \cite{LiuY:Products}. 

In the presence of a morphism to an abelian variety, stability conditions should be constrained by the extra symmetries of the situation; 
this idea was explored in~\cite{FLZ:Albanese} to study the case of irregular surfaces and abelian varieties.
In this paper, we take the next step, by analyzing a relative version of~\cite{FLZ:Albanese}. 
This leads to our main result, which says that stability conditions on products of curves of positive genus are completely determined by linear data. 
As an application, by using (and slightly expanding) the construction in~\cite{LiuY:Products} of stability conditions on products of curves, we construct the first examples of stability conditions on Hilbert schemes of points on (special) K3 surfaces.
This result should be regarded as the first step in a larger project which will be completed in the forthcoming papers~\cite{MPS:Deformation,LMPSZ:Deformation}, where we 
deduce the existence of stability conditions on very general polarized hyperk\"ahler manifolds of $\mathrm{K3}^{[n]}$-type.
Other applications of our results are contained  in~\cite{ChengY:Kummer} (to hyperk\"ahler manifolds of Kummer type) and in~\cite{HaidenSung:EEE} (to the triple product of a generic elliptic curve).

\subsection{Main results} 
For a triangulated category $\cD$ and a homomorphism $v \colon \rK_0(\cD) \to \Lambda$ from its Grothendieck group to a finite rank free abelian group, 
a stability condition $\sigma = (Z, \cP)$ on $\cD$ with respect to $(\Lambda, v)$ consists of two pieces of data --- 
a central charge $Z \colon \Lambda \to \C$ and a slicing $\cP$ of $\cD$ --- subject to various compatibility conditions. 
Importantly, any such $\sigma$ gives rise to a notion of $\sigma$-(semi)stable objects $E \in \cD$, each of which has a well-defined phase $\phi_{\cP}(E) \in \R$. 
Moreover, by Bridgeland's fundamental deformation theorem, the collection of all $\sigma$ forms a complex manifold $\Stab_{(\Lambda, v)}(\cD)$. 

When $\cD = \Db(X)$ for a smooth projective variety, one often assumes that $v$ factors through the numerical Grothendieck group of $X$, since in practice any naturally occurring~$v$ has this property. 
In this situation, if $X$ has finite Albanese morphism, then in \cite{FLZ:Albanese} it is shown that for any  
$\sigma = (Z, \cP) \in \Stab_{(\Lambda,v)}(\Db(X))$, the skyscraper sheaves $k(x)$ of closed points $x \in X$ are all $\sigma$-stable of the same phase $\phi_{\cP}(k(x))$. In particular, for any closed point $x \in X$ we obtain a continuous map 
\begin{equation}
\label{Stab-map-Hom}
\Stab_{(\Lambda,v)}(\Db(X))\longrightarrow \Hom_\Z(\Lambda,\C)\times \R, \qquad \sigma=(Z,\cP)\longmapsto (Z,\phi_{\cP}(k(x))), 
\end{equation} 
which is independent of the choice of $x \in X$. 
A natural class of varieties with finite Albanese are products of curves of positive genus. Our main result shows that, surprisingly, for such varieties the above map is injective, so that stability conditions are completely determined by linear data. 

\begin{theo}\label{thm:Main}
Let $X = C_1 \times \cdots \times C_n$, where $C_1, \dots, C_n$ are smooth projective curves of positive genus over an algebraically closed field $k$. 
Let $\Lambda$ be a finite rank free abelian group and let $v \colon \rK_0(X) \to \Lambda$ be a homomorphism which factors through the numerical Grothendieck group of~$X$. 
Then the map $\Stab_{(\Lambda,v)}(\Db(X))\longrightarrow \Hom_\Z(\Lambda,\C)\times \R$ defined in~\eqref{Stab-map-Hom} 
is injective. 
\end{theo}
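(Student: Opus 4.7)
The strategy is to turn the claimed injectivity into a statement about $\Pic^0(X)$-invariance, and then to reconstruct the slicing inductively in the number of curve factors. Let $\sigma_1=(Z,\cP_1)$ and $\sigma_2=(Z,\cP_2)$ be two stability conditions with the same image in $\Hom_\Z(\Lambda,\C)\times\R$, so that in particular all skyscraper sheaves $k(x)$ are $\sigma_i$-stable of a common phase $\phi_0$ by \cite{FLZ:Albanese} (applicable because each $C_i$ has positive genus, whence $X$ has finite Albanese). For any $L\in\Pic^0(X)=\Jac(C_1)\times\cdots\times\Jac(C_n)$, the autoequivalence $(-)\otimes L$ of $\Db(X)$ acts as the identity on $\Kn(X)$, and hence preserves $Z$, while $k(x)\otimes L\cong k(x)$ preserves $\phi_0$. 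Therefore $L\mapsto\sigma_i\otimes L$ is a continuous map from the connected abelian variety $\Pic^0(X)$ into the fiber of \eqref{Stab-map-Hom} over $(Z,\phi_0)$.

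By Bridgeland's deformation theorem \cite{Bridgeland:Stab}, the central-charge map $\sigma\mapsto Z$ is a local homeomorphism, so its fibers are discrete, and a fortiori so are the fibers of \eqref{Stab-map-Hom}. Hence the continuous image of the connected $\Pic^0(X)$ inside such a discrete fiber is a single point, giving $\sigma_i\otimes L=\sigma_i$ for every $L\in\Pic^0(X)$ and $i=1,2$. The problem is thereby reduced to the following reconstruction claim: \emph{a $\Pic^0(X)$-invariant stability condition on $\Db(X)$ is uniquely determined by its central charge and by the common phase of skyscraper sheaves.}

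I would establish the reconstruction claim by induction on $n$, using the projections $\pi_i\colon X\to C_1\times\cdots\times\widehat{C}_i\times\cdots\times C_n$. The case $n=1$ follows from the classification of $\Stab(\Db(C))$ for a curve of positive genus, where \eqref{Stab-map-Hom} is injective on connected components essentially by construction. For the inductive step, the $\Pic^0(X)$-invariance decomposes: invariance under $\pi_i^*\Pic^0(C_1\times\cdots\times\widehat{C}_i\times\cdots\times C_n)$ controls the ``base directions,'' while invariance under the pullback of $\Pic^0(C_i)$ controls the $C_i$-fiber direction. Together with the Fourier--Mukai transform by the Poincar\'e sheaf on $X\times\Pic^0(X)$, this relative translation-invariance should allow one to recognize $\sigma_i$ as the gluing of fiberwise data over the factors, and then to invoke the inductive hypothesis together with a relative tilting argument in the spirit of \cite{LiuY:Products} to pin down the slicing uniquely from $(Z,\phi_0)$.

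The principal obstacle lies precisely in this last reconstruction step. In the abelian variety case treated in \cite{FLZ:Albanese}, translation-invariance by $X$ itself forces the heart of $\sigma$ to be a tilt of $\Coh(X)$, and the remaining data of $(Z,\phi_0)$ then determines it. For a product of higher-genus curves, $X$ is strictly smaller than its Albanese, so one has only the weaker $\Pic^0(X)$-action and no self-translation; a genuinely \emph{relative} version of the heart-reconstruction argument must be developed, and showing that $\Pic^0(X)$-invariance suffices to force the slicing to be fiberwise geometric along each $\pi_i$ is the delicate technical point that drives the proof.
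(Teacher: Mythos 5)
Your opening reduction is sound: since the fibers of the forgetful map $\sigma \mapsto Z$ are discrete by Bridgeland's deformation theorem and $\Pic^0(X)$ is connected, any $\sigma$ in a fiber of~\eqref{Stab-map-Hom} is $\Pic^0(X)$-invariant. (The paper obtains the same invariance from Theorem~\ref{theorem-invariant-stability}, Polishchuk's result on connected group actions, which it needs anyway in a relative form for subcategories with support.) You have also correctly guessed the overall shape of the argument: work fiberwise along the projections to the curve factors, terminating in the category of objects supported at a single closed point, where the phase of $k(x)$ pins down the slicing (Lemma~\ref{lemma-stability-DxX}).

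The gap is that the entire content of the proof lives in the step you defer as ``the delicate technical point,'' and your sketch of it does not contain the ideas that make it work. Concretely, two things are needed and neither appears in your proposal. First, one must show that a stability condition on $\Db(X)$ actually restricts to a stability condition on the subcategory $\Db_{Y_p}(X)$ of objects set-theoretically supported on a fiber of a projection $f \colon X \to C_j$; this rests on the fact that an object invariant under $-\otimes f^*L$ for all $L \in \Pic^0(C_j)$ has support mapping to finitely many points of $C_j$ (Lemma~\ref{lem:FinSupport}), whence the Harder--Narasimhan factors of a fiber-supported object are again fiber-supported (Lemma~\ref{lem:HNfactors}, Propositions~\ref{prop:Restriction} and~\ref{prop:RestInjective}). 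Second, and more importantly, one must show that two stability conditions with the same central charge whose restrictions to \emph{all} fibers over $C_j$ agree are equal. The paper does this not by any ``gluing of fiberwise data'' or relative tilting in the style of \cite{LiuY:Products}, but by producing explicit test objects: for $E$ in the heart of $\sigma_1$, the object $E_p = E \otimes f^*k(p) = i_{p*}i_p^*E$ lies in $\cP_1((0,2])$ (Lemma~\ref{lem:Tensor}, proved by comparing the supports of the cohomology objects of $E_p$ and $E_q$ for $q \neq p$ using $\Pic^0(C_j)$-invariance), lies in a known range of slices of $\cP_2$ because it is fiber-supported and the fiberwise slicings agree, and satisfies $\Hom(E,E_p) \neq 0$ and $\Hom(E_p[-1],E)\neq 0$ by adjunction and Grothendieck duality for the Cartier divisor $X_p \subset X$. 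Feeding these into the general comparison criteria (Lemmas~\ref{lem:DistOfHearts} and~\ref{lem:SameCentralCharge}) forces $\cP_1 = \cP_2$. Without some substitute for this mechanism, your induction has no way to descend from $X$ to its fibers, so the proposal as written is a plausible plan rather than a proof.
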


The proof of the theorem is based on some results of independent interest about restricting stability conditions. 
Namely, for a morphism to an abelian variety we show that stability conditions on the total space restrict to the subcategory of objects set-theoretically supported on any fiber 
(Propositions~\ref{prop:Restriction}), and when the morphism factors through a morphism to a positive genus curve, we show that stability conditions are in fact determined by their central charge and restrictions to fibers (Proposition~\ref{prop:RestInjective}). 

Theorem~\ref{thm:Main} raises several natural questions that we do not answer in this paper, the main one being how to characterize the image of the map~\eqref{Stab-map-Hom}. 
On the other hand, the theorem already gives strong control of stability conditions on $X$. 
For instance, as an immediate corollary, we can detect whether a group of autoequivalences of $\Db(X)$ preserves a stability condition from its action on the central charge and skyscraper sheaves. 
In particular, we obtain the following result for automorphisms of $X$.

\begin{coro}\label{cor:Invariance}
In the setting of Theorem~\ref{thm:Main}, assume that a group $G$ acts on $X$ and $\Lambda$ in such a way that the homomorphism $v$ is $G$-equivariant. 
Then $\sigma = (Z, \cP) \in \Stab_{(\Lambda,v)}(\Db(X))$ is $G$-invariant if and only if $Z$ is $G$-invariant.
\end{coro}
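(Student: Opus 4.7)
The forward implication is immediate, since $Z$ is recorded as part of the data of $\sigma$. For the converse, my plan is to reduce the problem to the injectivity statement of Theorem~\ref{thm:Main}, applied to $\sigma$ and to its translate $g \cdot \sigma$ under the autoequivalence of $\Db(X)$ induced by each $g \in G$. Concretely, I would fix $g \in G$ and verify that $\sigma$ and $g \cdot \sigma$ have the same image under the map~\eqref{Stab-map-Hom}; Theorem~\ref{thm:Main} will then force $g \cdot \sigma = \sigma$.

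On the central charge factor, the $G$-equivariance of $v \colon \rK_0(X) \to \Lambda$ tells us that the endomorphism of $\rK_0(X)$ induced by $g$ descends through $v$ to the prescribed $G$-action on $\Lambda$. Hence the central charge of $g \cdot \sigma$ is $Z \circ g^{-1} \colon \Lambda \to \C$, which coincides with $Z$ by the assumed $G$-invariance. On the phase factor, unwinding the definition of the action of an autoequivalence on slicings, the phase of $k(x)$ with respect to $g \cdot \sigma$ equals the $\sigma$-phase of the skyscraper at $g^{-1}(x)$ (up to the chosen convention for the action). By the result of~\cite{FLZ:Albanese} recalled in the introduction, $X$ has finite Albanese and therefore all skyscraper sheaves are $\sigma$-stable of a common phase; in particular this number is independent of the point and equals $\phi_{\cP}(k(x))$. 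Thus both coordinates of~\eqref{Stab-map-Hom} coincide for $\sigma$ and $g \cdot \sigma$.

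Given Theorem~\ref{thm:Main} and~\cite{FLZ:Albanese}, the argument is essentially formal. The only real thing to be careful about will be matching conventions: one must check that the natural action of $\Aut(X)$ on $\Stab_{(\Lambda,v)}(\Db(X))$ is compatible, through the stated $G$-equivariance of $v$, with the $G$-action on $\Hom_\Z(\Lambda,\C)$ used to formulate the $G$-invariance of $Z$. I do not anticipate an obstacle beyond this bookkeeping.
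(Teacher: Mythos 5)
Your argument is correct and is exactly the intended one: the paper states Corollary~\ref{cor:Invariance} as an immediate consequence of Theorem~\ref{thm:Main}, and the way to make that precise is precisely your comparison of $\sigma$ with $g\cdot\sigma$ under the map~\eqref{Stab-map-Hom}, using the $G$-invariance of $Z$ for the first coordinate and the constancy of the skyscraper phase from~\cite{FLZ:Albanese} for the second. The convention-matching you flag is indeed the only bookkeeping, and it is supplied by the definition of the $G$-action on $\Stab_{(\Lambda,v)}(\Db(X))$ in Section~\ref{section-group-actions}.
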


Our main application is the construction of stability conditions on Hilbert schemes of points on special surfaces. 
For an abelian surface $A$, recall that 
the Kummer K3 surface $\Kum(A)$ is  the minimal resolution of singularities of the quotient of $A$ by the action of~$-1$.  

\begin{theo}
\label{theorem-Hilbn}
Let $S$ be a smooth projective surface over an algebraically closed field of characteristic $0$, which is of one of the following two types: 
\begin{enumerate}
    \item \label{Hilbn-C1C2}
    $S = C_1 \times C_2$, where $C_1$ and $C_2$ are smooth projective curves of positive genus; or 
    \item \label{Hilbn-Kum}
    $S = \Kum(E_1 \times E_2)$, where $E_1$ and $E_2$ are elliptic curves. 
\end{enumerate}
Then for all $n \geq 1$, stability conditions exist on $\Db(\Hilb^n(S))$. 
\end{theo}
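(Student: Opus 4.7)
The plan is to reduce the problem, in both cases, to constructing a stability condition on a product of curves of positive genus that is invariant under a suitable finite group action. The basic tool is the Bridgeland--King--Reid/Haiman equivalence
\[
\Db(\Hilb^n(S)) \cong \Db_{\fS_n}(S^n),
\]
and, in case~\eqref{Hilbn-Kum}, additionally the BKR equivalence $\Db(\Kum(A)) \cong \Db_{\{\pm 1\}}(A)$ for the crepant resolution $\Kum(A) \to A/\{\pm 1\}$, with $A = E_1\times E_2$. After these equivalences, I would apply (a slight expansion of) the construction of~\cite{LiuY:Products} to build a stability condition on the relevant product of $2n$ curves, arrange the input data so that the resulting central charge is invariant under the acting group, and then invoke Corollary~\ref{cor:Invariance} together with standard equivariant descent to produce a stability condition on the Hilbert scheme.

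In case~\eqref{Hilbn-C1C2}, after BKR we have $S^n = C_1^n \times C_2^n$ with $\fS_n$ acting by simultaneous permutation of the two collections of factors. Starting from any stability conditions $\sigma_i$ on $\Db(C_i)$, the (expanded) Liu--Y construction produces a stability condition on $\Db(C_1^n \times C_2^n)$ whose central charge depends symmetrically on the $\sigma_i$, hence is automatically $\fS_n$-invariant. Corollary~\ref{cor:Invariance} then upgrades this to $\fS_n$-invariance of the full stability condition, and descent from a $\fS_n$-invariant stability condition on $\Db(X)$ to a stability condition on $\Db_{\fS_n}(X)$ concludes.

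In case~\eqref{Hilbn-Kum}, chaining the two BKR equivalences yields
\[
\Db(\Hilb^n(\Kum(A))) \cong \Db_G(A^n), \qquad G = \{\pm 1\}^n \rtimes \fS_n,
\]
the hyperoctahedral group acting on $A^n = E_1^n \times E_2^n$ by simultaneous sign changes on matched $E_1$- and $E_2$-factors and by permutation of the $n$ pairs. Since $A^n$ is again a product of $2n$ curves of positive genus, and since one may choose $\{\pm 1\}$-invariant stability conditions on each $E_j$ (for instance, the classical ones), the same strategy produces a stability condition on $\Db(A^n)$ with $G$-invariant central charge, hence $G$-invariant by Corollary~\ref{cor:Invariance}. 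Equivariant descent finishes the argument.

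The hard part will be, first, checking that the Liu--Y construction genuinely extends to products of $2n$ curves while preserving the symmetric form of the input data required here (this is the content of the phrase ``slightly expanding'' in the introduction); and, second, for case~\eqref{Hilbn-Kum}, carrying out equivariant descent cleanly for the non-abelian wreath product $G$, where the $\{\pm 1\}$-factors have fixed loci corresponding to $2$-torsion points of $A$. The conceptual role of Corollary~\ref{cor:Invariance} is precisely to reduce the otherwise delicate $G$-invariance of the slicing to the purely linear invariance of the central charge, so that once the expanded Liu--Y construction is in hand, the passage through the BKR equivalences is formal.
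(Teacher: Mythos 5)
Your proposal is correct and follows essentially the same route as the paper: BKR/Haiman equivalences to reduce to $\fS_n$- (resp.\ $\bm{\mu}_2^n\rtimes\fS_n$-) equivariant categories of a product of $2n$ positive-genus curves, invariance of the central charge upgraded to invariance of the stability condition via Corollary~\ref{cor:Invariance}, and descent via the inducing machinery of Theorem~\ref{theorem-G-invariant-stability}. The ``hard part'' you correctly flag is made precise in the paper as Theorem~\ref{thm:LiuY+}: the iterated construction of \cite{LiuY:Products} only yields the support property over a quotient lattice that is not preserved by the group action, so one must upgrade it to the full lattice $\Lambda_{2n}$ before the equivariant descent can be applied.
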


The proof of the theorem relies on a strengthening of the support property (see Definition~\ref{def:StabilityCondition}) for the stability conditions on a product $C_1 \times \cdots \times C_n$ of positive genus curves constructed in \cite{LiuY:Products}, which may be of independent interest. 
Namely, in Theorem~\ref{thm:LiuY+} we prove the support property holds 
with respect to a natural homomorphism $v_n \colon \rK_0(C_1 \times \cdots \times C_n) \to \Lambda_n$, where $\Lambda_n \coloneqq (\Z \oplus \Z)^{\otimes n}$ is a free abelian group of rank $2^n$. 
In these terms, the 
stability conditions we construct in Theorem~\ref{theorem-Hilbn} are with respect to a pair $(\Lambda, v)$, where $\Lambda = \Lambda_{2n}^{\fS_n}$ is the invariant lattice for the action of the symmetric group $\fS_n$ on $\Lambda_{2n} = ((\Z \oplus \Z) \otimes (\Z \oplus \Z))^{\otimes n}$ permuting the $n$ factors (see Remark~\ref{rmk:InvariantLatticeHilb}). 

\subsection{Outline of the proofs} 
The proof of Theorem~\ref{thm:Main} builds on the approach of~\cite{FLZ:Albanese}. 
The starting observation is that, on an abelian variety $A$, the only objects in $\Db(A)$ which are invariant with respect to tensoring by line bundles in $\Pic^0(A)$ are objects supported on a finite set of closed points. 
In Section~\ref{sec:Support}, 
we apply this idea to the case of a 
morphism $a\colon X\to A$ from a projective variety to an abelian variety, and show that a stability condition on $\Db(X)$ induces a stability condition on the subcategory $\Db_{X_p}(X)$ of objects supported on the fiber $X_p$, for each closed point $p \in A$ (see Proposition~\ref{prop:Restriction}). 
Building on this, for a morphism $f \colon X \to C$ to a curve of positive genus, we show that stability conditions restrict in the same way to subcategories supported on fibers over closed points $p \in C$, and that any stability condition is uniquely determined by its central charge and these restrictions (see Proposition~\ref{prop:RestInjective}). 
This allows us to prove Theorem~\ref{thm:Main} in  Section~\ref{subsec:Proof} by reducing to an analogous assertion for stability conditions on the subcategory $\Db_{x}(X)$ of objects supported at a closed point $x \in X$. 
Corollary~\ref{cor:Invariance} is an immediate consequence. 

In Section~\ref{subsec:Hilbert} we prove Theorem~\ref{theorem-Hilbn} by reducing to the case of products of curves. 
This uses two additional techniques: 
the inducing machinery for stability conditions~\cite{Pol:Family, MMS:Inducing}, which allows for descending invariant stability conditions on a variety to a quotient stack, 
and the BKR correspondence for Hilbert schemes of points on surfaces~\cite{BKR:MukaiMcKay,Hai:Hilb}, which identifies the derived category of such a Hilbert scheme with that of the stacky $n$-fold symmetric product of the surface.  
The construction of stability conditions on products with curves from \cite{LiuY:Products} is reviewed in Section~\ref{subsec:Products}. 
In Section~\ref{subsec:StabilityProductCurves}, we prove a stronger support property for the resulting  stability conditions on products of positive genus curves, which is needed in order to apply the inducing machinery to the relevant group actions. 

In Section~\ref{sec:Bridgeland} we gather some preliminaries on stability conditions that are needed in the above proofs, including a useful criterion for the equality of two stability conditions and some theorems about the behavior of stability conditions under group actions.

\subsection{Conventions}
A variety over a field $k$ is an integral scheme which is separated and finite type over $k$. 
A curve is a $1$-dimensional variety. 
For a variety $X$, we denote by $\Db(X)$ its bounded derived category of coherent sheaves. 
If $Y\subset X$ is a closed subset, we denote by $\Db_Y(X)$ the full subcategory of $\Db(X)$ consisting of those objects with (set-theoretic) support on $Y$. 
All functors are derived.

\subsection{Acknowledgements}
The paper benefited from many useful discussions with the following
people, whom we gratefully acknowledge: 
Arend Bayer, Warren Cattani, Yiran Cheng, Lie Fu, Alexander Kuznetsov, Shengxuan Liu, Alekos Robotis, Ian Selvaggi, and Benjamin Sung.

During the preparation of this paper, C.L.~ was supported by the Royal Society University Research Fellowship URF\textbackslash R1\textbackslash 201129 ``Stability condition and application in algebraic geometry''; E.M.~ was partially supported by the ERC Synergy grant ERC-2020-SyG-854361-HyperK; A.P.~ was partially supported by the NSF grant DMS-2112747, the NSF FRG grant DMS-2052665, the NSF CAREER grant DMS-2143271, and a Sloan Research Fellowship; P.S.~ was partially supported by the ERC Consolidator grant ERC-2017-CoG-771507-StabCondEn, by the research project PRIN 2017 ``Moduli and Lie Theory'', and by the research project FARE 2018 HighCaSt (grant number R18YA3ESPJ); and X.Z.~ was partially supported by
the NSF grants DMS-2101789 and the NSF FRG grant DMS-2052665.
Part of this work was also completed while the authors were in residence at the Simons Laufer Mathematical Sciences Institute in Spring 2024 under the support of the NSF grant DMS-1928930. 


\section{Preliminaries on stability conditions}\label{sec:Bridgeland}

In this section, after briefly reviewing the basics of stability conditions in Section~\ref{section-stability-basics}, 
we discuss two important ingredients for our main results: 
a criterion for the equality of two stability conditions (Section~\ref{section-comparing-stability})
and group actions on stability conditions (Section~\ref{section-group-actions}).   

Throughout this section, $\cD$ denotes a triangulated category.

\subsection{Stability conditions} 
\label{section-stability-basics}
We closely follow~\cite{Bridgeland:Stab}, as reviewed in \cite[Section~12]{BLMNPS:Family}.

\begin{defi}\label{def:slicing}
A \emph{slicing} $\cP$ of $\cD$ is a collection of full additive subcategories $\cP(\phi) \subset \cD$, indexed by $\phi\in \mathbb{R}$, satisfying:
\begin{enumerate}
\item\label{enum:slicing1} For all $\phi\in\mathbb{R}$, $\cP(\phi+1)=\cP(\phi)[1]$. 
\item\label{enum:slicing2} If $\phi_1>\phi_2$ and $E_1\in\cP(\phi_1)$, $E_2\in\cP(\phi_2)$, then $\Hom_{\cD}(E_1,E_2)=0$. 
\item\label{enum:slicing3}
For every object $E \in \cD$, there exists a finite sequence of morphisms
\[ 0 = E_0 \xrightarrow{s_1} E_1 \xrightarrow{s_2} \cdots \xrightarrow{s_m} E_m = E \]
such that $F_i\coloneqq\mathrm{cone}(s_i)\in\cP(\phi_i)$ for a sequence of real numbers 
$\phi_1 > \phi_2 > \cdots > \phi_m$. 
\end{enumerate}
\end{defi}

The nonzero objects of $\cP(\phi)$ are called \emph{semistable} of \emph{phase} $\phi$; the simple objects in $\cP(\phi)$ are called \emph{stable}. 
The sequence of morphisms appearing in condition~\eqref{enum:slicing3} is unique and called the \emph{Harder--Narasimhan (HN) filtration} of $E$, and the objects $F_i$ are called the \emph{Harder--Narasimhan factors} of $E$. 
For a nonzero object $E \in \cD$, we denote by $\phi_\cP^+(E)\coloneqq \phi_1$ and $\phi_\cP^-(E)\coloneqq \phi_m$ the maximal and minimal phases of the Harder--Narasimhan factors; when $E$ is semistable, these numbers coincide and are denoted simply by $\phi_{\cP}(E)$.
A slicing gives a collection of bounded t-structures, parameterized by $\phi\in\R$.
More precisely, for an interval $I \subset \R$, we write 
\begin{equation*}
 \cP(I) \coloneqq \set{E\in\cD \st \phi_\cP^+(E), \phi_\cP^-(E) \in I} = \langle \cP(\phi) \rangle_{\phi \in I} \subset \cD, 
\end{equation*}
where $\langle \cP(\phi) \rangle_{\phi \in I} \subset \cD$ denotes the extension closure of the subcategories $\cP(\phi)$ for $\phi \in I$. 
Then, for each $\phi\in\R$, we get a bounded t-structure on $\cD$ with heart $\cP((\phi, \phi+1])$.

The weakest notion of a stability condition consists of a slicing and a central charge which are suitably compatible:

\begin{defi}
Let $v \colon \rK_0(\cD)\to \Lambda$ be a homomorphism from the Grothendieck group of $\cD$ to an abelian group $\Lambda$. 
A \emph{pre-stability condition} on $\cD$ with respect to $(\Lambda,v)$ is a pair $(Z, \cP)$ where $\cP$ is a slicing of $\cD$ and $Z\colon\Lambda\to\C$ is a group homomorphism, which satisfy the following condition: for all $\phi \in \R$ and $0 \neq E \in \cP(\phi)$, we have $Z(v(E)) \in\R_{>0}\cdot e^{\mathfrak{i}\pi\phi}$. 
\end{defi}

We often suppress $v$ from the notation, for instance writing $Z(E)$ for $Z(v(E))$. 

\begin{defi}
    Suppose $\cD$ is a $k$-linear triangulated category, where $k$ is a field. 
    We say that $\cD$ is \emph{proper} over $k$ if for every $E, F \in \cD$, the $k$-vector space $\bigoplus_{n \in \Z} \Hom(E, F[n])$ is finite-dimensional. 
    In this case, the \emph{Euler form} is the bilinear form $\chi \colon \rK_0(\cD) \times \rK_0(\cD) \to \Z$ induced by the Euler characteristic 
    \begin{equation*}
        \chi(E,F) = \sum_{n} (-1)^n \dim_k \Hom(E,F[n]) 
    \end{equation*} 
    for $E, F \in \cD$. 
    The \emph{numerical Grothendieck group} of $\Knum(\cD)$ is the quotient of $\rK_0(\cD)$ by the right kernel of this form, i.e.\ by the subgroup of elements $\beta \in \rK_0(\cD)$ for which $\chi(\alpha, \beta) = 0$ for all $\alpha \in \rK_0(\cD)$. 
    We denote by $v_{\mathrm{num}} \colon \rK_0(\cD) \to \Knum(\cD)$ the canonical homomorphism, and we say that a homomorphism $v \colon \rK_0(\cD) \to \Lambda$ to an abelian group is \emph{numerical} if it factors through $v$. 
\end{defi}

\begin{rema}
    The numerical Grothendieck group $\Knum(\cD)$ is particularly well-behaved when $\cD$ is of geometric origin, in the sense that it admits an admissible embedding into the derived category of a smooth projective variety. First, in this case $\cD$ admits a Serre functor, which implies that the left and right kernel of the Euler form coincide, so that $\Knum(\cD)$ can equivalently be described as the quotient of $\rK_0(\cD)$ by the left kernel. 
    Second, $\Knum(\cD)$ is a finite rank free abelian group by \cite[Lemma~12.7]{BLMNPS:Family}. 
\end{rema}

\begin{defi}\label{def:StabilityCondition}
Let $v \colon \rK_0(\cD)\to \Lambda$ be a homomorphism from the Grothendieck group of $\cD$ to a finite rank free abelian group $\Lambda$. 
A \emph{stability condition} on $\cD$ with respect to $(\Lambda,v)$ is a pre-stability condition $\sigma=(Z,\cP)$ satisfying the following \emph{support property}:
there exists a quadratic form $Q$ on the vector space $\Lambda_\R\coloneqq\Lambda\otimes\R$ such that
\begin{itemize}
\item $Q$ is negative definite on the kernel of the $\R$-linear extension $Z_{\R}\colon\Lambda_\R\to\C$, and
\item for any $\sigma$-semistable object $E \in \cD$, we have $Q(v(E)) \geqslant 0$. 
\end{itemize}
\end{defi}

\begin{rema}
The support property implies that for any $C > 0$, there can only be finitely many classes $\bv\in\Lambda$ with $\abs{Z(\bv)} < C$ and $Q(\bv) \geqslant 0$. 
It follows that for a stability condition $\sigma = (Z, \cP)$, 
$\cP(\phi)$ is a finite length category for every $\phi \in \R$, as the image of objects in $\cP(\phi)$ in $\R_{>0}\cdot e^{i\pi\phi}$ is a discrete set.
In other words, the support property guarantees the existence of a \emph{Jordan--H\"older filtration} of a semistable object $E$, which has stable filtration quotients.
\end{rema}

\begin{rema}
The support property admits an alternative formulation which is sometimes convenient. 
Fix any norm $\|\blank\|$ on the finite dimensional vector space $\Lambda_\R$.
Then a pre-stability condition $\sigma=(Z,\cP)$ satisfies the support property if and only if there exists a constant $C_\sigma>0$ such that, for any $\sigma$-semistable object $E\in\cD$, we have
\begin{equation}\label{eq:SupportPropertyEquiv}
|Z(v(E))|\geq C_\sigma\|v(E)\|.    
\end{equation}
\end{rema}

Sometimes we need to compare stability conditions with respect to different data $(\Lambda,v)$.
The following two elementary observations are useful for this purpose. 

\begin{rema}\label{rmk:Surjectivity}
Let $\sigma=(Z,\cP)$ be a stability condition on $\cD$ with respect to $(\Lambda,v)$.
Let $\Lambda'$ denote the image of $v$ in $\Lambda$ and $\iota\colon\Lambda'\hookrightarrow\Lambda$ the inclusion. If we set 
\[
Z'\coloneqq Z\circ\iota\colon\Lambda'\longrightarrow\C
\]
and let $v'$ be the induced morphism $\rK_0(\cD)\to\Lambda'$, then $\sigma'=(Z',\cP)$ is a stability condition on $\cD$ with respect to $(\Lambda',v')$.
\end{rema}

\begin{rema}\label{rmk:CompareSupportProperty}
Let $\sigma=(Z,\cP)$ be a stability condition on $\cD$ with respect to $(\Lambda,v)$, where $v$ is surjective.
Let $\overline{\Lambda}$ denote the image of $Z$ in $\C$, seen as an abstract finite rank free abelian group, and denote by $p\colon\Lambda\twoheadrightarrow\overline{\Lambda}$ the surjection.
If we set
\[
\overline{v}\coloneqq p\circ v\colon \rK_0(\cD)\longrightarrow\overline{\Lambda}
\]
and let $\overline{Z}$ be the induced group homomorphism $\overline{\Lambda}\to\C$, then $\overline{\sigma}=(\overline{Z},\cP)$ is a stability condition on $\cD$ with respect to $(\overline{\Lambda},\overline{v})$.

Indeed, it is immediate to see that $\overline{\sigma}$ is a pre-stability condition with respect to $(\overline{\Lambda},\overline{v})$.
Hence, we only have to check the support property.
By fixing a splitting of the surjection $p$ as real vector spaces, we can choose a norm $\|\blank\|$ on $\Lambda_\R$ extending orthogonally a norm on $\overline{\Lambda}_\R$.
Hence, by using~\eqref{eq:SupportPropertyEquiv}, we have
\[
|\overline{Z}(\overline{v}(E))|=|Z(v(E))|\geq C_\sigma\|v(E)\|\geq C_\sigma \|p(v(E))\| = C_\sigma\|\overline{v}(E)\|,
\]
for any semistable object $E\in\cD$, which is what we wanted.
\end{rema}

The set $\Stab_{(\Lambda,v)}(\cD)$ of all stability conditions with respect to $(\Lambda,v)$ has a natural topology induced by the following generalized metric function: 
for stability conditions $\sigma_1=(Z_1, \cP_1)$ and $\sigma_2=(Z_2, \cP_2)$ with respect to $(\Lambda, v)$, their distance is defined as
\[
\mathrm{dist}(\sigma_1,\sigma_2)\coloneqq\sup_{\substack{ 0 \neq E\in\cD}}\left\{\abs{\phi_{\cP_1}^-(E)-\phi_{\cP_2}^-(E)},\abs{\phi_{\cP_1}^+(E)-\phi_{\cP_2}^+(E)}, \left\|Z_1-Z_2\right\|\right\}\in [0,+\infty],
\]
where $\|\blank\|$ denotes any fixed norm on the finite-dimensional vector space $\mathrm{Hom}_{\mathbb Z}(\Lambda, \mathbb C)$.

\begin{theo}[Bridgeland's Deformation Theorem]\label{thm:Bridgeland}
The space $\Stab_{(\Lambda,v)}(\cD)$ of stability conditions
is naturally a complex manifold of dimension $\rank(\Lambda)$ such that the map forgetting the slicing
\[
\mathcal{Z} \colon\Stab_{(\Lambda,v)}(\cD) \longrightarrow \mathrm{Hom}_{\mathbb Z}(\Lambda, \mathbb C), \qquad \sigma =(Z,\cP) \longmapsto Z 
\]
is a local biholomorphism at every point of $\Stab_{(\Lambda,v)}(\cD)$.
\end{theo}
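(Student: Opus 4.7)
I would follow Bridgeland's classical strategy. The statement splits into two parts: showing that $\cZ$ is a local homeomorphism onto an open subset of $\Hom_\Z(\Lambda, \C)$, and then using this to transport the complex structure. For local injectivity around a fixed $\sigma_0 = (Z_0, \cP_0)$, suppose $(Z, \cP_1)$ and $(Z, \cP_2)$ both lie close to $\sigma_0$ in the generalized metric. The phase estimates $|\phi^{\pm}_{\cP_i}(E) - \phi^{\pm}_{\cP_0}(E)| < 1$ combined with the common central charge $Z$ force the two perturbed phases of every object to agree, and then the finite-length property of the slices (guaranteed by the support property) yields $\cP_1(\phi) = \cP_2(\phi)$ for every $\phi \in \R$.

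The technical heart is local surjectivity: given $Z$ sufficiently close to $Z_0$, construct a slicing $\cP$ such that $\sigma = (Z, \cP)$ is a stability condition in a prescribed neighborhood of $\sigma_0$. The support property of $\sigma_0$, witnessed by a quadratic form $Q$ (equivalently by the bound~\eqref{eq:SupportPropertyEquiv}), enters in three crucial ways: (i) $Z(v(E)) \neq 0$ for every nonzero $\sigma_0$-semistable $E$, so a new phase $\phi'$ is well-defined for each such $E$ by lifting $\arg Z(v(E))/\pi$; (ii) the shift $|\phi' - \phi_{\cP_0}(E)|$ is uniformly bounded in terms of $\|Z - Z_0\|$; and (iii) only finitely many numerical classes of $\sigma_0$-semistable objects fall in any bounded region of $\C$, so the perturbed phases do not accumulate on any compact interval. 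One then sets $\cP(\phi)$ to be generated, under extensions, by those $\sigma_0$-semistable objects whose perturbed phase equals $\phi$, verifies the three axioms of Definition~\ref{def:slicing}, and upgrades HN filtrations for $\sigma_0$ to HN filtrations for $\sigma$ by refining them according to the new phase ordering. The support property for $\sigma$ is inherited from $\sigma_0$ via a small perturbation of $Q$ that remains negative definite on the modified $\ker Z_\R$.

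Once $\cZ$ is a local homeomorphism, the complex manifold structure on $\Stab_{(\Lambda,v)}(\cD)$ is defined by pulling back the structure from the finite-dimensional complex vector space $\Hom_\Z(\Lambda, \C)$; this has complex dimension $\rank(\Lambda)$, and compatibility on chart overlaps is automatic because transition maps are restrictions of the identity on $\Hom_\Z(\Lambda, \C)$. The local biholomorphism statement is then tautological. The main obstacle throughout is the construction of the perturbed slicing — and in particular the verification of axiom~\eqref{enum:slicing3} (existence of HN filtrations for $\sigma$), which requires delicate control of potentially long chains of morphisms between semistable objects. This is precisely what the support property, through the discreteness statement~(iii) above, is designed to provide.
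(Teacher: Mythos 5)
This theorem is not proved in the paper: it is quoted as Bridgeland's classical result, with references to \cite[Theorem 7.1]{Bridgeland:Stab} and \cite[Theorem 1.2]{Bayer:ShortProof}. So the relevant comparison is with the standard proof, and your outline does follow its overall architecture (local injectivity via the distance estimate and equality of central charges, which is Bridgeland's Lemma 6.4 and is fine as you sketch it; local surjectivity via a deformation of the slicing controlled by the support property; then pulling back the complex structure along the local homeomorphism $\mathcal{Z}$, which is indeed routine).

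However, there is a genuine gap in the technical heart, namely your definition of the deformed slicing. You propose to set $\cP(\phi)$ equal to the extension closure of the $\sigma_0$-semistable objects whose perturbed phase equals $\phi$. This fails already at axiom~\eqref{enum:slicing2} of Definition~\ref{def:slicing}: if $A$ and $B$ are $\sigma_0$-semistable with $\phi_{\cP_0}(A)$ slightly less than $\phi_{\cP_0}(B)$, then $\Hom(A,B)$ may be nonzero, and an arbitrarily small perturbation of the central charge can reverse the order of their phases, so that $\phi'(A)>\phi'(B)$ while $\Hom(A,B)\neq 0$. The point is that old semistable objects can \emph{destabilize} under the deformation, so the new semistable objects are not iterated extensions of old semistables of a fixed new phase. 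The correct construction (Bridgeland, Section 7) defines the new slicing intrinsically: one fixes a small $\eta>0$, observes that each thin subcategory $\cP_0((\phi-\eta,\phi+\eta))$ is quasi-abelian, and declares $\cP(\phi)$ to consist of the objects of $\cP_0((\phi-\eta,\phi+\eta))$ that are semistable with respect to the new central charge $Z$ as a stability function on that quasi-abelian category. The real work --- and the place where the support property (or Bridgeland's local finiteness, or Bayer's quadratic-form argument) is actually used --- is then to prove that Harder--Narasimhan filtrations exist inside these thin categories and glue to a slicing of $\cD$, together with the uniform phase bound that keeps the new stability condition within distance less than $1/2$ of $\sigma_0$. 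Your items (i)--(iii) correctly identify the inputs, but the construction they are meant to feed is the one above, not the extension-closure one you wrote down.
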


Theorem~\ref{thm:Bridgeland} can be made explicit as well, describing the image of $\mathcal{Z}$ in terms of $Q$ (see~\cite[Theorem 7.1]{Bridgeland:Stab} and \cite[Theorem 1.2]{Bayer:ShortProof}), but we will not need this.

Stability conditions can equivalently be described in terms of t-structures and stability functions on their hearts, as we briefly review now. 

\begin{defi}\label{def:StabilityFunctionHeart}
Let $\cA$ be an abelian category and let $v\colon \rK_0(\cA)\to \Lambda$ be a homomorphism from the Grothendieck group of $\cA$ to an abelian group $\Lambda$. 
A \emph{stability function} $Z$ on $\cA$ with respect to $(\Lambda, v)$ is a group homomorphism $Z\colon \Lambda \to \C$ such that for all $0 \neq E \in\cA$, the complex number $Z(v(E))$ is contained in the semi-closed upper half plane
\[
\H\sqcup \R_{<0} \coloneqq \{z\in\C  \st \Im z > 0, \text{ or } \Im z = 0\text{ and }\Re z < 0\}.
\]
\end{defi}

For $0\neq E \in \cA$ we define its \emph{phase} by $\phi(E) \coloneqq \frac{1}{\pi} \arg Z(E) \in (0, 1]$. 
\index{phi@$\phi(E)$, phase of an object}
An object $0\neq E \in \cA$ is called \emph{$Z$-semistable} if for all subobjects $0 \neq A \hookrightarrow E$, we have $\phi(A) \leqslant \phi(E)$.

\begin{defi}\label{def:satisfies_HN}
A stability function $Z$ on an abelian category $\cA$ satisfies the \emph{Harder--Narasimhan property} if every object $E \in\cA$ admits a \emph{Harder--Narasimhan filtration}, i.e. a sequence 
\[0 = E_0\hookrightarrow E_1 \hookrightarrow 
E_2\hookrightarrow \ldots \hookrightarrow E_m = E\]
of subobjects in $\cA$ such that $E_i /E_{i-1}$ is $Z$-semistable for $i = 1,\ldots, m$ and 
\begin{equation*} 
\phi (E_1 /E_0 ) > \phi (E_2 /E_1 ) > \cdots > \phi (E_m /E_{m-1}).
\end{equation*}
\end{defi}

Note that if $\cA \subset \cD$ is the heart of a bounded t-structure, then $\rK_0(\cD) = \rK_0(\cA)$, so a homomorphism $v \colon \rK_0(\cD) \to \Lambda$ is the same as a homomorphism $v \colon \rK_0(\cA) \to \Lambda$. 
\begin{lemm}[{\cite[Proposition~5.3]{Bridgeland:Stab}}]\label{lem:Bridgeland-stabviaheart} 
Let $v \colon \rK_0(\cD) \to \Lambda$ be a homomorphism to an abelian group $\Lambda$. 
The following data are equivalent: 
\begin{itemize}
    \item A pre-stability condition on $\cD$ with respect to $(\Lambda, v)$. 
    \item A pair $(Z, \cA)$ where $\cA \subset \cD$ is the heart of a bounded t-structure and $Z$ is a stability function on $\cA$ with respect to $(\Lambda,v)$ which satisfies the Harder--Narasimhan property. 
\end{itemize}
\end{lemm}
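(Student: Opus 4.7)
The plan is to prove the equivalence by constructing each datum explicitly from the other and verifying that the two assignments are mutually inverse; the work splits into the two directions, with the principal obstacle arising in $(\Leftarrow)$.

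For $(\Rightarrow)$, starting from a pre-stability condition $(Z, \cP)$ I would take $\cA \coloneqq \cP((0,1])$, which by the discussion after Definition~\ref{def:slicing} is the heart of a bounded t-structure. To check that $Z$ is a stability function on $\cA$, note that for $E \in \cP(\phi)$ with $\phi \in (0,1]$ the hypothesis $Z(E) \in \R_{>0} \cdot e^{\mathfrak{i}\pi\phi}$ places $Z(E)$ in the semi-closed upper half plane $\H \sqcup \R_{<0}$; for a general $0 \neq E \in \cA$ the slicing HN filtration expresses $E$ as an iterated extension of semistable factors $F_j \in \cP(\phi_j)$ with $\phi_j \in (0,1]$, so that $Z(E) = \sum_j Z(F_j)$ still lies in $\H \sqcup \R_{<0}$ since that set is closed under addition. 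Because each $F_j$ also lies in $\cA$ and $\cA$ is closed under extensions, the slicing HN filtration of $E$ is an HN filtration in the abelian sense; the $Z$-semistability of each $F_j$ in $\cA$ follows from the Hom-vanishing axiom~\ref{enum:slicing2}, and the phases in the two senses coincide by construction.

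For $(\Leftarrow)$, given $(Z, \cA)$ satisfying the HN property, I would define $\cP(\phi)$ for $\phi \in (0,1]$ to be the full additive subcategory generated by zero and the $Z$-semistable objects of $\cA$ of phase $\phi$, and extend to all real $\phi$ by $\cP(\phi+1) \coloneqq \cP(\phi)[1]$. Axiom~\ref{enum:slicing1} is automatic. For the Hom-vanishing axiom~\ref{enum:slicing2}, given $E_i \in \cP(\phi_i)$ with $\phi_i \in (n_i, n_i+1]$ and $\phi_1 > \phi_2$, two cases arise: if $n_1 > n_2$, shifting reduces $\Hom_\cD(E_1, E_2)$ to a $\Hom$ in $\cD$ from $\cA$ to $\cA[n_2-n_1]$ with $n_2 - n_1 < 0$, which vanishes by the t-structure axioms; if $n_1 = n_2$, shifting by $[-n_1]$ reduces to the standard fact that $\Hom_\cA(A, B) = 0$ for $Z$-semistables $A, B \in \cA$ with $\phi(A) > \phi(B)$.

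The main content, and also the main obstacle, is axiom~\ref{enum:slicing3}: producing the slicing HN filtration of a general $E \in \cD$. I would proceed by induction on the cohomological amplitude with respect to the t-structure on $\cD$ with heart $\cA$. When the cohomology is concentrated in a single degree $i$, the object $E \simeq H^i(E)[-i]$ inherits its filtration from the HN filtration of $H^i(E)$ in $\cA$, with factors in $\cP((-i, -i+1])$ of strictly decreasing phases. In general, with $b$ the top cohomological degree of $E$, consider the truncation triangle $\tau^{\leq b-1} E \to E \to H^b(E)[-b]$: by induction the left term has an HN filtration with factors of phases in $\bigcup_{i<b}(-i, -i+1]$, all strictly greater than $-b+1$, while the factors of $H^b(E)[-b]$ have phases in $(-b, -b+1]$. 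Concatenating these filtrations via repeated application of the octahedral axiom produces a strictly decreasing HN filtration of $E$; the key point making this work is that the intervals $(-i, -i+1]$ for $i \in \Z$ tile $\R$ consistently with the shift, so phases strictly decrease across the boundary between consecutive cohomological degrees. To close, one verifies that the two constructions are mutually inverse: $\cP((0,1])$ recovers the given heart in one direction, and the $Z$-semistables of phase $\phi \in (0,1]$ in $\cA$ are exactly the nonzero objects of $\cP(\phi)$ in the other.
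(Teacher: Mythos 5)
The paper does not prove this lemma; it cites Bridgeland's Proposition~5.3 and only records the correspondence ($\cA = \cP((0,1])$ in one direction, unique extension of phases to a slicing in the other), which is exactly what you implement. Your argument is the standard proof from Bridgeland's paper and is correct: the case analysis for axiom~(2), and the induction on cohomological amplitude with octahedral concatenation for axiom~(3), are the right (and essentially only) way to do it. The one spot you gloss over is the claim in the forward direction that each $\cP$-semistable $F_j \in \cP(\phi)$ with $\phi \in (0,1]$ is $Z$-semistable in $\cA$: this needs the small standard argument that for a nonzero subobject $A \hookrightarrow F_j$ in $\cA$, all slicing-HN factors of $A$ have phase at most $\phi$ (else a Hom-vanishing contradiction via the quotient $F_j/A$), whence $\arg Z(A) \leq \pi\phi$ since $Z(A)$ is the sum of the $Z$'s of those factors; but this is routine and does not affect correctness.
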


Explicitly, the correspondence assigns to a pre-stability condition $\sigma = (Z, \cP)$ the pair $(Z, \cA)$, where $\cA$ is the heart $\cA=\cP((0,1])$; conversely, given $(Z, \cA)$, there is a unique extension of the phases of objects in $\cA$ to a slicing $\cP$ on $\cD$, determined by property~\eqref{enum:slicing1} in Definition~\ref{def:slicing}. 
When $\Lambda$ is free of finite rank, this correspondence also extends to stability conditions, since the support property may be checked for $\sigma$-semistable objects in the heart $\cA$. 
When we want to stress the abelian category $\cA$ instead of the slicing, we will use the notation $\sigma=(Z,\cA)$ for a (pre-)stability condition on $\cD$.

\subsection{Comparing stability conditions} 
\label{section-comparing-stability}
For later use, we prove here two useful results for comparing pre-stability conditions. 

\begin{lemm}\label{lem:DistOfHearts}
Let $\cP_1$ and $\cP_2$ be two slicings on $\cD$.
Assume that there exist $a_-,a_+\in\Z$, $a_-<a_+$, with the following property: for any nonzero object $E\in\cP_1((0,1])$, there exist objects
\[
E^-\in\cP_1((-\infty,2])\cap\cP_2((-\infty,1-a_-])
\quad 
\text{and} 
\quad 
E^+\in\cP_1((-1,\infty))\cap\cP_2((1-a_+,\infty))
\]
such that
\[
\Hom_\cD(E,E^-)\neq0\quad \text{and} \quad\Hom_\cD(E^+,E)\neq0.
\]
Then
\[
\cP_2((0,1])\subset\cP_1((a_-,a_+]).
\]
\end{lemm}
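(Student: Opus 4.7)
The plan is to translate the hypothesis into pointwise phase inequalities via the standard slicing fact $\Hom(A,B) \neq 0 \Rightarrow \phi_\cP^-(A) \leq \phi_\cP^+(B)$, and then derive the conclusion by contradiction through an iterated HN-peeling argument.

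First I would deduce phase bounds: applying the standard inequality to each of $\Hom(E, E^-) \neq 0$ and $\Hom(E^+, E) \neq 0$ shows that every nonzero $E \in \cP_1((0,1])$ satisfies $\phi_{\cP_2}^-(E) \leq 1 - a_-$ and $\phi_{\cP_2}^+(E) > 1 - a_+$. Shifting (i.e.\ applying the hypothesis to $E[-m] \in \cP_1((0,1])$) upgrades this to: every nonzero $E \in \cP_1((m, m+1])$ satisfies $\phi_{\cP_2}^-(E) \leq m + 1 - a_-$ and $\phi_{\cP_2}^+(E) > m + 1 - a_+$. Taking contrapositives yields the key orthogonality
\[
\cP_1((m, m+1]) \cap \cP_2((0, 1]) = 0 \qquad \text{for every integer } m \text{ with } m \geq a_+ \text{ or } m \leq a_- - 1.
\]

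For the main step I would argue by contradiction. Take a nonzero $F \in \cP_2((0,1])$ and assume $\phi_{\cP_1}^+(F) > a_+$ (the case $\phi_{\cP_1}^-(F) \leq a_-$ is symmetric, using $E^-$ and the dual triangle $F_{j-1} \to F \to F/F_{j-1}$). Let $G_1, \dots, G_\ell$ be the $\cP_1$-HN factors of $F$ with phases $\phi_1 > \dots > \phi_\ell$, let $F_j$ be the partial filtrations with $F_\ell = F$, and set $m := \lceil \phi_1 \rceil - 1 \geq a_+$. I claim, by induction on $j$, that each $F_j$ lies in the single shifted heart $\cP_1((m, m+1])$. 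The base case $F_1 = G_1 \in \cP_1((m, m+1])$ is immediate. For the inductive step, the shifted hypothesis applied to $F_j$ produces an object $\tilde E^+_j \in \cP_1((m-1, \infty)) \cap \cP_2((m+1-a_+, \infty))$ with $\Hom(\tilde E^+_j, F_j) \neq 0$; since $m \geq a_+$, one has $\phi_{\cP_2}^-(\tilde E^+_j) > 1 \geq \phi_{\cP_2}^+(F)$, so $\Hom(\tilde E^+_j, F) = 0$. The long exact sequence obtained by applying $\Hom(\tilde E^+_j, -)$ to the distinguished triangle $F_j \to F \to F/F_j$ then forces $\Hom(\tilde E^+_j, F/F_j[-1]) \neq 0$, and the $\cP_1$-phase inequality gives $\phi_{j+1} = \phi_{\cP_1}^+(F/F_j) > m$; together with $\phi_{j+1} < \phi_j \leq m+1$ this yields $G_{j+1} \in \cP_1((m, m+1])$, so that $F_{j+1}$, being an extension of $G_{j+1}$ by $F_j$ within this heart, also lies in $\cP_1((m, m+1])$.

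Iterating to $j = \ell$ gives $F = F_\ell \in \cP_1((m, m+1])$ with $m \geq a_+$, contradicting the orthogonality from the first paragraph and completing the proof. The main subtlety is that the induction must use the fixed ambient object $F$ throughout, rather than iterating on the quotients $F/F_j$: the uniform vanishing $\Hom(\tilde E^+_j, F) = 0$, which requires $F \in \cP_2((0,1])$ specifically, is precisely what drives the long exact sequence to produce the key nonvanishing $\Hom(\tilde E^+_j, F/F_j[-1]) \neq 0$ at each step, and a naive iteration on $F/F_j$ would break down since these quotients are not in $\cP_2((0,1])$.
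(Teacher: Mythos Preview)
Your proof is correct, but it takes a more roundabout path than the paper's. The paper's argument is a one-step truncation, with no induction: given nonzero $F \in \cP_2((0,1])$, let $E \to F \to E'$ be the top $\cP_1$-truncation triangle with $0 \neq E \in \cP_1((n-1, n])$ and $E' \in \cP_1((-\infty, n-1])$. The shifted hypothesis gives $E^+ \in \cP_1((n-2, \infty)) \cap \cP_2((n-a_+, \infty))$ with $\Hom(E^+, E) \neq 0$; the $\cP_1$-constraint on $E^+$ kills $\Hom(E^+, E'[-1])$, so the long exact sequence forces $\Hom(E^+, F) \neq 0$, and then the $\cP_2$-constraint on $E^+$ together with $F \in \cP_2((0,1])$ yields $n \leq a_+$ directly. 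The lower bound is analogous via the bottom truncation and $E^-$.

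By contrast, your argument reverses the roles of the two constraints: you use the $\cP_2$-bound on $\tilde E^+_j$ to kill $\Hom(\tilde E^+_j, F)$ and push the nonvanishing to $\Hom(\tilde E^+_j, F/F_j[-1])$, then invoke the $\cP_1$-bound to pin down $\phi_{j+1}$, and only reach a contradiction after inductively collapsing the entire $\cP_1$-HN filtration of $F$ into a single shifted heart and appealing to your first-paragraph orthogonality. This works, but the induction is avoidable precisely because the $\cP_1$-condition $E^+ \in \cP_1((-1, \infty))$ is exactly calibrated to annihilate the map into the lower truncation tail $E'[-1]$ in one shot. Recognising this is what makes the paper's proof shorter.
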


\begin{proof}
Let $F\in\cP_2((0,1])$ be a nonzero object.
If we consider the bounded t-structure $\tau_1$ associated to the slicing $\cP_1$, we get a distinguished triangle
\begin{equation}\label{eq:DistOfHearts}
E\longrightarrow F \longrightarrow E',    
\end{equation}
where $0\neq E\in\cP_1((n-1,n])$ is the (shifted) minimal cohomology object of $F$ with respect to $\tau_1$ and  $E'\in\cP_1((-\infty,n-1])$, for some $n\in\Z$.
By assumption, there exists
\[
E^+\in\cP_1((n-2,\infty))\cap\cP_2((n-a_+,\infty))
\]
such that $\Hom_\cD(E^+,E)\neq0$.
By applying $\Hom_\cD(E^+,-)$ to~\eqref{eq:DistOfHearts}, we get an exact sequence
\[
\dots\rightarrow \Hom_\cD(E^+,E'[-1])\rightarrow\underbrace{\Hom_\cD(E^+,E)}_{\neq0}\rightarrow\Hom_\cD(E^+,F)\rightarrow\dots
\]
Since $E^+\in\cP_1((n-2,\infty))$ and $E'[-1]\in\cP_1((-\infty,n-2])$, we get
\[
\Hom_\cD(E^+,E'[-1])=0.
\]
Hence, $\Hom_\cD(E^+,F)\neq0$.
Since $E^+\in\cP_2((n-a_+,\infty))$ and $F\in\cP_2((0,1])$, we must have $n-a_+<1$, namely $n\leq a_+$.

Similarly, we get a distinguished triangle
\[
E'\longrightarrow F \longrightarrow E\longrightarrow E'[1],
\]
where $E\in\cP_1((n,n+1])$ and $E'\in\cP_1((n+1,+\infty))$ for a suitable $n \in \Z$.
Then we can argue as above using the object 
\[
E^-\in \cP_1((-\infty,n+2])\cap\cP_2((-\infty,n+1-a_-])
\]
to deduce that $n \geq a_-$, as we wanted.
\end{proof}

Later we will use the above result for $a_-=-1$ and $a_+=2$ in combination with the following lemma (which is similar to~\cite[Lemma 8.11]{BMS:StabCY3s}).

\begin{lemm}\label{lem:SameCentralCharge}
Let $\sigma_1=(Z_1,\cP_1)$ and $\sigma_2=(Z_2,\cP_2)$ be pre-stability conditions on $\cD$ with respect to a fixed $(\Lambda,v)$.
Assume that $Z_1=Z_2$ and that
\[
\cP_2((0,1])\subset\cP_1((-1,2]).
\]
Then $\sigma_1=\sigma_2$.
\end{lemm}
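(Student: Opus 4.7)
My plan is to reduce the lemma to showing the hearts $\cA_1 = \cA_2$ agree, where $\cA_i = \cP_i((0,1])$. Given $Z_1 = Z_2$, the slicing is recoverable from the heart and the central charge, so heart equality forces $\sigma_1 = \sigma_2$. Since a strict inclusion of bounded hearts on the same triangulated category is impossible, it suffices to prove a single inclusion, say $\cA_2 \subset \cA_1$.

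A preparatory step, which does not use $Z_1 = Z_2$, is to derive the symmetric inclusion $\cP_1((0,1]) \subset \cP_2((-1,2])$. For $F \in \cP_1((0,1])$, if $\phi_2^+(F) > 2$, the top $\sigma_2$-HN factor $B$ lies in $\cP_2((n,n+1])$ for some $n \geq 2$, so by the shifted hypothesis $B \in \cP_1((n-1,n+2])$, yielding $\phi_1^-(B) > 1$; this contradicts $\phi_1^-(B) \leq \phi_1^+(F) \leq 1$, which follows from the nonzero HN map $B \to F$. The bound $\phi_2^-(F) > -1$ is symmetric. Combined with $Z_1 = Z_2$, these two inclusions yield a key \emph{phase congruence}: any nonzero object simultaneously $\sigma_1$- and $\sigma_2$-semistable has phases differing by an even integer; so a $\sigma_i$-semistable object of phase in $(-1,2]$ which lies in the opposite heart $\cA_j$ must already have its $\sigma_i$-phase in $(0,1]$ (otherwise $Z$ would not lie in $\mathbb{H} \cup \mathbb{R}_{<0}$).

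To establish $\cA_2 \subset \cA_1$, take $E \in \cA_2$. Since $\cA_1$ is extension-closed and $E$ is an iterated extension of its $\sigma_2$-HN factors, I may assume $E$ is $\sigma_2$-semistable of phase $\psi \in (0,1]$. The hypothesis gives $E \in \cP_1((-1,2])$, and I must rule out $\sigma_1$-HN factors of $E$ with phase outside $(0,1]$. If the top such factor $A_1$ had phase $\phi_1 \in (1,2]$, the phase congruence would force $A_1 \notin \cA_2$; the symmetric inclusion places $A_1 \in \cP_2((0,3])$, and the nonzero map $A_1 \to E$ together with $\sigma_2$-semistability of $E$ constrains $\phi_2^-(A_1) \in (0,\psi]$. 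The bottom $\sigma_2$-HN factor $F'$ of $A_1$ is then $\sigma_2$-semistable of phase $\psi' \leq \psi$, lies in $\cA_2 \setminus \cA_1$ (its top $\sigma_1$-HN factor still has phase $>1$, from the map $A_1 \to F'$), and plays the same role as $E$. The case $\phi_1^-(E) \leq 0$ is symmetric.

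The hardest part I anticipate is closing this iteration to a contradiction. Any map $A_1 \to E$ factors as $A_1 \to F' \to E$ via the identification $\Hom(A_1, E) = \Hom(F', E)$ (valid because the higher $\sigma_2$-HN pieces of $A_1$ have phase $>\psi$), so if $\psi' = \psi$ the resulting nonzero map $F' \to E$ lies in the abelian subcategory $\cP_2(\psi)$; combining this with the phase congruence should produce an object simultaneously $\sigma_1$- and $\sigma_2$-semistable with phases violating the mod-$2$ constraint, forcing $\psi' < \psi$ strictly. A further descent argument that tracks the $\sigma_1$-HN structures and the classes of successive $F^{(k)}$ in $\Lambda$ should then rule out an infinite strictly decreasing sequence in $(0,1]$, yielding the desired contradiction.
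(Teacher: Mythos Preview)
Your setup through the ``phase congruence'' observation is correct, and the symmetric inclusion $\cP_1((0,1]) \subset \cP_2((-1,2])$ is valid. The gap is in the final two paragraphs: the iteration does not close, and the sketched closure is flawed. The identification $\Hom(A_1,E)=\Hom(F',E)$ requires the higher $\sigma_2$-HN factors of $A_1$ to have phase strictly greater than $\psi$, but you have only shown they have phase greater than $\psi'$ with $\psi'\leq\psi$. Even if you could force $\psi'<\psi$, nothing prevents an infinite strictly decreasing sequence of phases: these are \emph{pre}-stability conditions, so there is no support property and no discreteness to exploit, and ``tracking classes in $\Lambda$'' yields no termination principle.

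The paper's proof avoids iteration entirely. The idea you are missing is to bound the extreme truncation from \emph{both} sides at once, by feeding $E$ itself back into the truncation triangle rather than only using the nonzero map from the top HN factor. In your direction ($\cA_2\subset\cA_1$): given $E\in\cA_2$, form the $\cA_1$-truncation triangle at the minimal cohomological degree, say $F\to E\to F'$ with $0\neq F\in\cP_1((n,n+1])$ and $F'\in\cP_1((-\infty,n])$, and suppose $n\geq 1$. Then trivially $E\in\cA_2\subset\cP_2((-\infty,n])$, while $F'[-1]\in\cP_1((-\infty,n-1])\subset\cP_2((-\infty,n])$ by your symmetric inclusion; the rotated triangle $F'[-1]\to F\to E$ therefore forces $F\in\cP_2((-\infty,n])$. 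On the other hand $F\in\cP_1((n,n+1])\subset\cP_2((n-1,n+2])$ by the same inclusion, so in fact $F\in\cP_2((n-1,n])$. But then $Z_1(F)=Z_2(F)$ must lie in the two disjoint semi-closed half-planes associated to $\cP_1((n,n+1])$ and $\cP_2((n-1,n])$, a contradiction. This single step replaces your entire descent.
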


\begin{proof}
We only need to show that $\cP_1((0,1])\subset\cP_2((0,1])$; indeed, then the heart of the bounded t-structure corresponding to $\cP_1$ must be contained in, and hence coincide with, that of $\cP_2$. 
Let $E\in\cP_1((0,1])$ be a nonzero object. 
If we consider the bounded t-structure associated to $\cP_2$, then the truncation triangle at the minimal cohomological degree of $E$ has the form 
\begin{equation}\label{eq:SameCentralCharge}
F \longrightarrow E \longrightarrow F',
\end{equation}
where $0 \neq F\in\cP_2((n,n+1])$ and $F'\in\cP_2((-\infty,n])$ for some $n \in \Z$.
We claim that $n\leq0$, and hence that $E \in \cP_2((-\infty,1])$. 
Indeed, if $n>0$, we have 
\[
E\in\cP_1((0,1])\subset\cP_1((-\infty,n]).
\]
By assumption, we also have
\[
F'[-1]\in\cP_2((-\infty,n-1])\subset\cP_1((-\infty,n]).  
\]
Hence from the triangle~\eqref{eq:SameCentralCharge}, we deduce that $F\in\cP_1((-\infty,n])$ as well.
Hence, using our assumption again, we have
\[
F\in\cP_2((n,n+1])\cap\cP_1((-\infty,n])\subset\cP_1((n-1,n]). 
\]
Thus $Z_1(F)$ and $Z_2(F)$ live in different semi-closed half planes, which contradicts the assumption $Z_1 = Z_2$. 

Similarly, we get a distinguished triangle
\[
F'\longrightarrow E \longrightarrow F,  
\]
where $0 \neq F\in\cP_2((n,n+1])$ and $F'\in\cP_2((n+1,+\infty))$.
As above, we deduce that $n\geq0$, and hence conclude that $E\in\cP_2((0,\infty))$, as we wanted.
\end{proof}

\subsection{Group actions} 
\label{section-group-actions}

The group of autoequivalences of $\cD$ acts on $\Stab_{(\Lambda,v)}(\cD)$.
More precisely, 
let $G\subset\Aut(\cD)$ be a subgroup of the group of (isomorphism classes of) autoequivalences of $\cD$, 
let $\Lambda$ be a finite rank free abelian group equipped with a $G$-action, and let $v \colon \rK_0(\cD) \to \Lambda$ be a $G$-equivariant homomorphism. 
Then $G$ acts on $\Stab_{(\Lambda,v)}(\cD)$. 
Explicitly, $g \in G$ acts on $\sigma=(Z,\cP)\in\Stab_{(\Lambda,v)}(\cD)$ by $g \cdot \sigma = (g \cdot Z, g \cdot \cP)$, where 
\begin{equation*}
    g \cdot Z \coloneqq Z \circ g^{-1} \quad \text{and} \quad 
    (g \cdot \cP)(\phi) \coloneqq g(\cP(\phi)) \text{ for all } \phi \in \R,  
\end{equation*}
and we write simply $g$ for the automorphism of $\Lambda$ and autoequivalence of $\cD$ given by the group actions. 
We say that $\sigma$ is \emph{$G$-invariant} if it is fixed by this $G$-action.

Later we will need the following theorem to pass from products of curves to our results about Hilbert schemes of points. 
\begin{theo}[{\cite[Proposition 2.2.3]{Pol:Family}, \cite[Theorem 1.1]{MMS:Inducing}, \cite[Theorem 10.1]{BMS:StabCY3s}}]
\label{theorem-G-invariant-stability}
     Let $X$ be a variety over a field $k$ with an action by a finite group $G$ of order prime to $\mathrm{char}(k)$. 
     Let $\Lambda$ be a finite rank free abelian group equipped with a $G$-action, and let $v \colon \rK_0(X) \to \Lambda$ be a $G$-equivariant homomorphism.
     Let $\pi \colon X \to [X/G]$ be the canonical morphism to the quotient stack. 
     Given any $G$-invariant stability condition $\sigma \in \Stab_{(\Lambda, v)}(\Db(X))$, we define: 
     \begin{enumerate}
     \item $v^G \colon \rK_0([X/G]) \to \Lambda^G$ to be the homomorphism through which the composition 
     \[
\rK_0([X/G])\xlongrightarrow{\pi^*}\rK_0(X)\xlongrightarrow{v}\Lambda
\]
factors; 
    \item $\cP^G(\phi) \coloneqq \set{ E \in \Db([X/G]) \st \pi^*E \in \cP(\phi)}$ for any $\phi \in \R$; and 
    \item $Z^G \colon \Lambda^G \to \C$ as the restriction of $Z$ to $\Lambda^G \subset \Lambda$. 
     \end{enumerate}
    Then $\sigma^G \coloneqq (Z^G, \cP^G)$ is a stability condition on $\Db([X/G])$ with respect to $(\Lambda^G, v^G)$. 
\end{theo}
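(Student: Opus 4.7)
The plan is to use the equivalence between $\Db([X/G])$ and the category of $G$-equivariant objects in $\Db(X)$, which is valid because $|G|$ is invertible in $k$. Under this equivalence, $\pi^*$ corresponds to the forgetful functor, which is faithful, conservative, and satisfies
\[
\Hom_{\Db([X/G])}(E,F) = \Hom_{\Db(X)}(\pi^*E,\pi^*F)^G.
\]
For any $E \in \Db([X/G])$, the pullback $\pi^*E$ carries a canonical $G$-linearization, so the $G$-equivariance of $v$ ensures that $v(\pi^*E) \in \Lambda$ lies in $\Lambda^G$, justifying the factorization that defines $v^G$. The assumed $G$-invariance of $\sigma$ implies that $Z$ is $G$-invariant, which makes $Z^G \coloneqq Z|_{\Lambda^G}$ well-defined.

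Next, I would verify that $\cP^G$ is a slicing. The shift axiom $\cP^G(\phi+1)=\cP^G(\phi)[1]$ is immediate since $\pi^*$ commutes with shifts. The Hom-vanishing axiom follows directly from the displayed formula above and the corresponding property of $\cP$. The main content is the existence of Harder--Narasimhan filtrations: given $E \in \Db([X/G])$, I take the HN filtration
\[
0=F_0 \to F_1 \to \cdots \to F_m = \pi^*E
\]
in $\Db(X)$ with HN cones $A_i \in \cP(\phi_i)$ of strictly decreasing phases. Because $\sigma$ is $G$-invariant and the $F_i$ are intrinsically determined by their HN data, for any $g\in G$ the translate $g\cdot F_\bullet$ is again an HN filtration of $g\cdot \pi^*E=\pi^*E$; by uniqueness of HN filtrations, $g$ preserves each $F_i$. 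Thus every $F_i$ inherits a canonical $G$-linearization compatible with that of $\pi^*E$, and hence descends to a filtration of $E$ in $\Db([X/G])$ whose cones lie in $\cP^G(\phi_i)$. The compatibility $Z^G(v^G(E)) \in \R_{>0}\cdot e^{i\pi\phi}$ for $E \in \cP^G(\phi)$ is immediate from the analogous statement for $\pi^*E$.

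It remains to verify the support property for $\sigma^G$. I would take as quadratic form on $\Lambda^G_\R$ the restriction $Q^G$ of the form $Q$ provided by $\sigma$. Its kernel under $Z^G_\R$ is $(\ker Z_\R)\cap \Lambda^G_\R \subset \ker Z_\R$, on which $Q$ is negative definite, hence so is $Q^G$. For any $\sigma^G$-semistable object $E$, the pullback $\pi^*E$ is $\sigma$-semistable by construction of $\cP^G$, so
\[
Q^G(v^G(E)) = Q(v(\pi^*E)) \geq 0
\]
by the support property of $\sigma$. The principal technical obstacle is the descent of the HN filtration, which depends critically on both the $G$-invariance of $\sigma$ and the uniqueness of HN filtrations; once this step is secured, the remaining verifications reduce to formal consequences of the pullback-forgetful adjunction and the hypothesis that $|G|$ is invertible in~$k$.
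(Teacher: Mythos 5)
The paper does not prove this theorem itself — it is quoted from \cite{Pol:Family}, \cite{MMS:Inducing}, and \cite{BMS:StabCY3s} — and your argument is a correct reconstruction of the standard proof in those references: identify $\Db([X/G])$ with $G$-linearized objects via Elagin-type descent (using $|G|$ invertible), descend the HN filtration of $\pi^*E$ by uniqueness, and restrict the quadratic form to $\Lambda^G_\R$. The only point stated more tersely than it deserves is that ``$g$ preserves each $F_i$'' must be upgraded to a genuine cocycle of isomorphisms $g^*F_i \xrightarrow{\sim} F_i$ compatible with the linearization of $\pi^*E$; this does follow from the uniqueness you invoke, since the vanishing $\Hom\bigl(F_i, \mathrm{cone}(F_i \to \pi^*E)[-1]\bigr)=0$ forces each such compatible isomorphism (and each filtration map) to be unique, which yields both the cocycle condition and the descent of the morphisms $F_{i-1}\to F_i$.
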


\begin{rema}
    There is also a version of Theorem~\ref{theorem-G-invariant-stability} for suitable triangulated categories $\cD$ in place of $\Db(X)$; see \cite[Theorem 4.8]{enriques-category-stability} and \cite[Theorem 2.26]{dell-stability-quotients}. 
    In this setting, the role of $\Db([X/G])$ is played by the $G$-invariant category $\cD^{G}$. 
\end{rema}

The above discussion concerned the action of a discrete group on $\cD$. 
We will also need to consider the action of an algebraic group in the following sense. 

\begin{defi}
\label{definition-algebraic-G-act-D}
    Let $X$ be a variety over an algebraically closed field $k$. 
    An action of an algebraic group $G$ over $k$ on $\Db(X)$ consists of 
    \begin{itemize} 
    \item a group homomorphism $G(k) \to \Aut(\Db(X))$, $g \mapsto \Phi_g$, and 
    \item a kernel $K \in \Db(G \times X \times X)$, 
    \end{itemize} 
    which satisfy the following conditions: 
    \begin{enumerate}
    \item The Fourier--Mukai functor $\Phi_{K}$ given by $E \mapsto \mathrm{pr}_{13*}(\mathrm{pr}_{2}^*(E) \otimes K)$ sends $\Db(X)$ to $\Db(G \times X)$, and hence can be regarded as a functor 
    \begin{equation*}
        \Phi_K \colon \Db(X) \to \Db(G \times X). 
    \end{equation*}
    \item For every $g \in G(k)$, the Fourier--Mukai functor $\Phi_{K_g}$ associated to $K_g \coloneqq K\vert_{g \times X \times X}$ sends $\Db(X)$ to $\Db(X)$\footnote{This hypothesis follows by base change from the one on $\Phi_K$, as long as $G$ is smooth.}, and there is an isomorphism of functors $\Phi_g \cong \Phi_{K_g}$. 
    \end{enumerate}
    
    If $Y \subset X$ is a closed subvariety, then an action of $G$ on $\Db_Y(X)$ is defined to be an action of $G$ on $\Db(X)$ such that for every $g \in G(k)$, the autoequivalence $\Phi_g \in \Aut(\Db(X))$ preserves the subcategory $\Db_Y(X) \subset \Db(X)$. 
\end{defi}

\begin{rema}
\label{remark-PhiT}
    In the setting of Definition~\ref{definition-algebraic-G-act-D}, for any subvariety $T \subset G$ we set 
\begin{equation*}
    K_T \coloneqq K|_{T \times X \times X} \in \Dqc(T \times X \times X), 
\end{equation*}
where $\Dqc(T \times X \times X)$ denotes the unbounded derived category of quasi-coherent sheaves, and denote by 
\begin{equation*}
    \Phi_T \colon \Dqc(X) \to \Dqc(T \times X), \quad 
    E \mapsto \mathrm{pr}_{13*}(\mathrm{pr}_2^*(E) \otimes K_T) 
\end{equation*}
the associated Fourier--Mukai functor. 
This construction is compatible with base change, in the sense that for any $T' \subset T$, we have $(\Phi_T)|_{T' \times X} \simeq \Phi_{T'}$, where 
\begin{equation*}
    (-)|_{T' \times X} \colon \Dqc(T \times X) \to \Dqc(T' \times X)
\end{equation*} 
denotes the restriction functor. 
Moreover, if $T$ is either a point or an open subset of $G$, then $\Phi_T$ in fact restricts to a functor $\Db(X) \to \Db(T \times X)$ between bounded derived categories; indeed, when $T$ is a point or all of $G$, this follows from the definition of a $G$-action on $\Db(X)$, and the case where $T$ is open follows from the case where $T = G$ and the identification $(\Phi_G)|_{T \times X} \simeq \Phi_T$. 
\end{rema}

\begin{rema}
There is also a notion of an action of an algebraic group $G$ on a suitably enhanced smooth and proper $k$-linear triangulated category $\cD$. 
Namely, under a mild connectivity assumption, there is a group algebraic space $\Aut(\cD/k)$ classifying the autoequivalences of $\cD$ \cite[Proposition 8.2]{PI-abelian-3fold}, and an action of $G$ on $\cD$ can then be defined as a homomorphism $G \to \Aut(\cD/k)$ of group algebraic spaces. 
This definition is arguably more conceptual than Definition~\ref{definition-algebraic-G-act-D}, but we have eschewed it for the sake of simplicity. 
\end{rema}

\begin{exam}
    Let $X$ be a proper variety equipped with an action by an algebraic group $G$. 
    Then $G$ acts on $\Db(X)$. 
    Namely, we have a homomorphism $G(k) \to \Aut(\Db(X))$ given by $g \mapsto g_*$, and if $\Gamma \subset G \times X \times X$ is the graph of the action morphism $G \times X \to X$, then $K = \cO_{\Gamma}$ is a kernel satisfying the required compatibility condition. 
\end{exam}

\begin{exam}
\label{example-Pic0-action}
Let $f \colon X \to B$ be a morphism from 
a proper variety $X$ 
to a smooth projective variety $B$ over an algebraically closed field $k$. 
Then the Picard variety $\Pic^0(B)$ acts on $\Db(X)$ in the sense of Definition~\ref{definition-algebraic-G-act-D}. 
Explicitly, given a line bundle $L$ corresponding to a $k$-point of $\Pic^0(B)$, its action on $\Db(X)$ is given by the autoequivalence $- \otimes f^*L$. 
Further, if $P$ denotes the Poincar\'{e} line bundle on $\Pic^0(B) \times B$ 
and $\Delta \colon X \to X \times X$ the diagonal, 
then it is straightforward to check that 
\begin{equation*}
    K = (\id_{\Pic^0(B)} \times \Delta)_* (\id_{\Pic^0(B)} \times f)^* P \in \Db(\Pic^0(B) \times X \times X)
\end{equation*}
is a kernel which satisfies the required compatibility condition. 

Note that the above action preserves the support of objects in $\Db(X)$. Hence for any closed subvariety $Y \subset X$, we also obtain action of $\Pic^0(B)$ on $\Db_Y(X)$.
\end{exam}

Earlier, given an action of a discrete group $G$ on $\cD$, we explained that $G$ acts on $\Stab_{(\Lambda, v)}(\cD)$ when $v \colon \rK_0(\cD) \to \Lambda$ is a $G$-equivariant homomorphism. 
If $X$ is a variety over $k = \bar{k}$ with a closed subvariety $Y \subset X$ and $G$ is an algebraic group acting on $\cD = \Db_Y(X)$, we can apply this definition at the level of $k$-points; in particular, if $\rK_0(\Db_Y(X)) \to \Lambda$ is a $G(k)$-equivariant homomorphism, we say that $\sigma \in \Stab_{(\Lambda, v)}(\Db_Y(X))$ is $G$-invariant if it is $G(k)$-invariant. 
Similarly, if $\tau = (\Db_Y(X)^{\leq 0}, \Db_Y(X)^{\geq 0})$ is a t-structure on $\Db_Y(X)$, we say $\tau$ is $G$-invariant if for every $g \in G(k)$ the t-structure $g \cdot \tau =  (\Phi_g(\Db_Y(X)^{\leq 0}), \Phi_g(\Db_Y(X)^{\geq 0}))$ coincides with $\tau$. 

\begin{theo}[{\cite[Theorem 3.5.1]{Pol:Family}}]
\label{theorem-invariant-stability}
    Let $X$ be a 
    variety over an algebraically closed field $k$, and let $Y \subset X$ be a closed subvariety. 
    Assume we are given an action of a connected algebraic group $G$ on $\Db_Y(X)$. 
    \begin{enumerate}
        \item \label{tau-invariant-connected-group} If $\tau$ is a noetherian\footnote{Meaning that its heart is a noetherian abelian category.} bounded t-structure on $\Db_Y(X)$, then $\tau$ is $G$-invariant. 
        \item \label{sigma-invariant-connected-group} 
        If $v \colon \rK_0(\Db_Y(X)) \to \Lambda$ is a homomorphism to a finite rank free abelian group which is $G(k)$-equivariant when $\Lambda$ is equipped with the trivial $G(k)$-action, 
        then every stability condition $\sigma \in \Stab_{(\Lambda, v)}(\Db_Y(X))$ is $G$-invariant. 
    \end{enumerate}
\end{theo}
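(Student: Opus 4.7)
The plan for \eqref{tau-invariant-connected-group} is to show that the stabilizer subgroup $G_\tau \coloneqq \{g \in G(k) : g \cdot \tau = \tau\} \subset G(k)$ is open at the identity; since $G_\tau$ is a subgroup of $G(k)$ and $G$ is connected, this will force $G_\tau = G(k)$. To prove this openness, I would use the kernel $K$ defining the action to produce a relative Fourier--Mukai transform $\Phi_U \colon \Db_Y(X) \to \Db(U \times X)$ over an affine open neighborhood $U$ of $e$ in $G$ (see Remark~\ref{remark-PhiT}), whose restriction to each fiber $\{g\} \times X$ recovers $\Phi_g$. The collection $\{g \cdot \tau\}_{g \in U}$ can then be organized into a family of bounded t-structures over the connected base $U$, and the noetherian hypothesis on $\cA$ triggers Polishchuk's constancy theorem for families of noetherian bounded t-structures: such a family over a connected base is constant, yielding $g \cdot \tau = \tau$ throughout $U(k)$. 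The main obstacle will be to formalize this family-of-t-structures picture rigorously and to verify that the noetherianity of the heart propagates to the relative setting so that Polishchuk's theorem applies.

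For part \eqref{sigma-invariant-connected-group}, write $\sigma = (Z, \cA)$ with $\cA = \cP((0,1])$ its heart. The $G(k)$-equivariance of $v$, combined with the triviality of the $G(k)$-action on $\Lambda$, immediately gives
\[
g \cdot Z = Z \circ g^{-1} = Z \quad \text{for all } g \in G(k),
\]
so the central charge is automatically $G$-invariant. The plan is then to combine this with part \eqref{tau-invariant-connected-group}: the support property (Definition~\ref{def:StabilityCondition}) and the resulting local finiteness of $\cP(\phi)$ ensure noetherianity of $\cA$ in this geometric setting, whence \eqref{tau-invariant-connected-group} applies to give $g \cdot \cA = \cA$ for every $g \in G(k)$. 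Since a pre-stability condition is determined by its heart and stability function on it (Lemma~\ref{lem:Bridgeland-stabviaheart}) --- the slicing on $(0,1]$ being recovered via the argument of $Z$ and extended to all of $\R$ by shifts --- the $G$-invariance of both $Z$ and $\cA$ forces $\sigma$ itself to be $G$-invariant.

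As a backup for \eqref{sigma-invariant-connected-group} in case the noetherianity step above is problematic, one can argue directly from Bridgeland's deformation theorem (Theorem~\ref{thm:Bridgeland}): the map $\mathcal{Z}$ is a local biholomorphism, so its fibers are discrete, and the $G$-invariance of $Z$ confines the orbit $\{g \cdot \sigma : g \in G(k)\}$ to the fiber $\mathcal{Z}^{-1}(Z)$. Combined with connectedness of $G$ and continuity of the $G$-action on $\Stab_{(\Lambda,v)}(\Db_Y(X))$ coming from the kernel $K$, this would force the orbit to be a single point, hence $g \cdot \sigma = \sigma$ for all $g \in G(k)$.
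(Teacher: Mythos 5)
Both halves of your proposal contain genuine gaps. For part~\eqref{tau-invariant-connected-group}, the step you defer --- ``Polishchuk's constancy theorem for families of noetherian bounded t-structures: such a family over a connected base is constant'' --- is not an available off-the-shelf result; it is essentially \cite[Theorem 3.5.1]{Pol:Family} itself, whose published proof contains a gap (which is precisely why the paper supplies a new argument, due to Bayer). What \cite{Pol:Family} actually provides is the base-changed t-structure $\tau_S$ on $\Db(S\times X)$, its locality over $S$, and \emph{openness of the heart}; but openness of the heart passes from a fiber to a neighborhood, not the reverse, and restriction to a closed point $g\in U$ is only right t-exact, so knowing $\Phi_U(E)\in\cA_U$ does not directly yield $\Phi_g(E)\in\cA$ for $g\in U(k)$. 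The paper's proof is structured differently: for a fixed $g$ it assumes $\Phi_g(E)\notin\Db(X)^{\geq 0}$, produces a nonzero map $F[n]\to\Phi_g(E)$ with $F\in\cA$ and $n>0$, spreads this map out over $G$, and uses openness of the heart at $g^{-1}$ and at $e$ together with irreducibility of $G$ (so that the two open sets meet) to force the spread-out map to vanish on a nonempty open subset, contradicting its nonvanishing on every fiber. Your ``stabilizer is open at the identity'' plan would still require an argument of this Hom-vanishing type to close the loop, and as written it does not.

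For part~\eqref{sigma-invariant-connected-group}, the invariance of $Z$ is correct and easy, but your claim that the support property ensures noetherianity of the heart is unjustified: this is not known for a general stability condition, only when the central charge has discrete image, e.g.\ takes values in $\Q\oplus i\Q$ (\cite[Lemma 12.12]{BLMNPS:Family}). The paper's proof therefore applies part~\eqref{tau-invariant-connected-group} only to such $\sigma$, and then extends to all of $\Stab_{(\Lambda,v)}(\Db_Y(X))$ by density of the rational-central-charge locus (a consequence of Theorem~\ref{thm:Bridgeland}) together with continuity of the action of each \emph{fixed} $g$ on the stability manifold. Your backup argument has a different gap: it requires continuity of the orbit map $g\mapsto g\cdot\sigma$ in the variable $g$ for fixed $\sigma$, which is not established anywhere, is not a formal consequence of the existence of the kernel $K$, and is in fact close in difficulty to the statement being proved; the continuity that is actually available is in $\sigma$ for fixed $g$.
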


\begin{rema}
\label{remark-numerical-v-G-equivariant}
    In the setup of the above theorem, it is easy to see that $G(k)$ acts trivially on the numerical Grothendieck group $\Knum(\Db_Y(X))$ (when it is defined, e.g. if $X$ is smooth and proper), since the Euler characteristic is constant in families. 
    In particular, the assumption of $G(k)$-equivariance on the homomorphism $v$ in~\eqref{sigma-invariant-connected-group} is automatically satisfied if $v$ is numerical. 
\end{rema}

\begin{proof} 
\eqref{tau-invariant-connected-group} When $Y = X$, so that $\Db_Y(X) = \Db(X)$, this is \cite[Theorem 3.5.1]{Pol:Family}, but there is a gap in the proof. So we first explain a complete proof in this case due to Arend Bayer, and then indicate the necessary modifications for general $Y \subset X$. 

Let $\tau = (\Db(X)^{\leq 0}, \Db(X)^{\geq 0})$ be a noetherian bounded t-structure on $\Db(X)$ with heart~$\cA$. 
The key ingredient in the proof is the base changed t-structure $\tau_S$ on $\Db(S \times X)$ constructed in \cite[Theorem~3.3.6]{Pol:Family} for any finite type $k$-scheme $S$, whose heart we denote by $\cA_S$, with the following properties: 
\begin{enumerate}
    \item \label{tauS-1} $\tau_S$ is a noetherian bounded t-structure on $\Db(S \times X)$, such that for $S = \Spec(k)$ we have $\tau_S = \tau$. 
    \item $\tau_S$ is local over $S$ in the sense that for any open subset $U \subset S$ the restriction functor $\Db(S \times X) \to \Db(U \times X)$ is t-exact with respect to $\tau_{S}$ and $\tau_{U}$. 
    \item \label{tauS-3} $\tau_S$ satisfies openness of the heart  \cite[Proposition~2.3.7]{Pol:Family} in the sense that for any object $E \in \Db(S \times X)$ and point $s \in S(k)$ such that $E|_{s \times X} \in \cA_s = \cA$, there exists an open neighborhood $s \in U \subset S$ such that $E|_{U \times X} \in \cA_U$. 
\end{enumerate}

Now we prove that $\tau$ is $G$-invariant. 
Let $g \in G(k)$ and $E \in \cA$. We must show that $\Phi_g(E) \in \cA$. 
For sake of contradiction, assume that $\Phi_g(E) \notin \Db(X)^{\geq 0}$. 
Then there exists an object $F \in \cA$ and an integer $n > 0$ such that there is a nonzero morphism $f \colon F[n] \to \Phi_g(E)$. 
For any nonempty subvariety $T \subset G$, we obtain a  morphism $\Phi_T(f) \colon \Phi_T(F)[n] \to \Phi_T(\Phi_g(E))$ which is nonzero, where $\Phi_T$ is defined as in Remark~\ref{remark-PhiT}.
Indeed, if $h \in T(k)$ is a point, then the restriction of $\Phi_T(f)$ to $h \times X$ is the morphism obtained by applying the autoequivalence $\Phi_h$ to the nonzero morphism $f$. 

On the other hand, we have 
\begin{equation*}
\Phi_G(\Phi_g(E))\vert_{g^{-1} \times X} \cong 
\Phi_{g^{-1}} \Phi_g(E) \cong E \in \cA, 
\end{equation*}
and thus by openness of the heart for the t-structure $\tau_G$, we obtain an open neighborhood $g^{-1} \in U \subset G$ such that $\Phi_U(\Phi_g(E)) \in \cA_{U}$. 
Similarly, if $e \in G(k)$ denotes the identity, then we find a neighborhood $e \in V \subset G$ such that $\Phi_V(F) \in \cA_V$. 
As $G$ is connected, the intersection $W = U \cap V$ is nonempty, and by locality of the t-structure $\tau_G$, upon restriction to this open subset the objects  
$\Phi_W(\Phi_g(E)) = \Phi_U(\Phi_g(E))|_{W \times X}$ and $\Phi_W(F) = \Phi_V(F)|_{W \times X}$ are contained in the heart $\cA_W$. 
This implies that any morphism $\Phi_W(F)[n] \to \Phi_W(\Phi_g(E))$ must vanish, contradicting the conclusion of the previous paragraph. 

This proves that $\Phi_g(E) \in \Db(X)^{\geq 0}$. Arguing similarly, we find that $\Phi_g(E) \in \Db(X)^{\leq 0}$, and hence $\Phi_g(E) \in \cA$, as required. 

Finally, we consider the case when $Y \subset X$ is an arbitrary closed subvariety. 
Let $\tau$ be a noetherian bounded t-structure on $\Db_Y(X)$. 
Then the arguments in \cite{Pol:Family} easily extend to provide a base changed t-structure $\tau_S$ on $\Db_{S \times Y}(S \times X)$ for any finite type $k$-scheme $S$, satisfying the analogues of the properties~\eqref{tauS-1}-\eqref{tauS-3} above. 
With this in hand, the argument above works verbatim for $\Db_Y(X)$. 
This completes the proof of part~\eqref{tau-invariant-connected-group} of the theorem. 

\eqref{sigma-invariant-connected-group} 
If $\sigma = (Z, \cA) \in \Stab_{(\Lambda, v)}(\Db_Y(X))$ is a stability condition such that $Z$ is contained in $\Hom_{\Z}(\Lambda, \Q \oplus i \Q) \subset \Hom_{\Z}(\Lambda, \C)$, then as is well-known (see \cite[Lemma 12.12]{BLMNPS:Family}), the heart $\cA$ is noetherian. 
Therefore, by~\eqref{tau-invariant-connected-group} the heart $\cA$ is $G(k)$-invariant. 
Since by assumption $G(k)$ acts trivially on $\Lambda$, 
the stability condition $\sigma$ is therefore $G(k)$-invariant. 
Moreover, it follows from Theorem~\ref{thm:Bridgeland} that the collection of $\sigma = (Z, \cA)$ such that $Z$ takes values in $\Q \oplus i\Q$ is dense in $\Stab_{(\Lambda, v)}(\cD)$. 
Thus, $G(k)$ acts trivially on a dense subset of $\Stab_{(\Lambda, v)}(\cD)$, and hence on all of $\Stab_{(\Lambda, v)}(\cD)$ as the action is continuous. 
\end{proof}

\begin{rema}
For the proofs of our main results, we in fact only need Theorem~\ref{theorem-invariant-stability} in the special case where $Y = X$, so that $\Db_Y(X) = \Db(X)$. 
More precisely, as the reader may verify by tracing through our proofs, we only need to apply the theorem for stability conditions $\sigma \in \Stab_{(\Lambda, v)}(\Db_Y(X))$ which are restricted from a stability condition $\sigma'$ (satisfying the same hypotheses) on $\Db(X)$. 
However, our intermediate results Proposition~\ref{prop:Restriction} and Proposition~\ref{prop:RestInjective} proved below are more naturally stated for stability conditions that are not restricted from $\Db(X)$, and in this form they depend on Theorem~\ref{theorem-invariant-stability} for general $Y \subset X$. 
\end{rema}


\section{Stability conditions on categories with support and products of curves} 
\label{sec:Support}

The goal of this section is to prove our first main result, Theorem~\ref{thm:Main}. 
Our proof is based on some results of independent interest about stability conditions on derived categories with support on a prescribed closed subset. Namely, in the presence of a morphism to an abelian variety, we prove that such stability conditions can be restricted to fibers (Proposition~\ref{prop:Restriction}), and when the morphism factors through a morphism to a positive genus curve, we prove that stability conditions are moreover determined by their central charge and restrictions to fibers (Proposition~\ref{prop:RestInjective}).
Using this, we prove Theorem~\ref{thm:Main} in Section~\ref{subsec:Proof}, by reducing to an easy analogous assertion for stability conditions on the subcategory of objects supported at a point. 

Throughout this section, we work over an algebraically closed field $k$. 

\subsection{Restricting stability conditions to fibers over abelian varieties}
\label{subsec:Restricting}

If $X$ is a projective variety equipped with a morphism to an abelian variety $A$, then by  Example~\ref{example-Pic0-action} the Picard variety $\Pic^0(A)$ acts on $\Db(X)$, or more generally on $\Db_Y(X)$ for any closed subset $Y \subset X$.
Our goal is to show that in this setting, stability conditions can be restricted to fibers, or more precisely to the subcategory of objects set-theoretically supported on a fiber: 

\begin{prop} \label{prop:Restriction}
Let $a \colon X \to A$ be a morphism from a projective variety to an abelian variety over an algebraically closed field $k$, and let $Y \subset X$ be a closed subset. 
Let $v \colon \rK_0(\Db_Y(X)) \to \Lambda$ be a homomorphism to a finite rank free abelian group such that $v$ is $\Pic^0(A)(k)$-equivariant when $\Lambda$ is equipped with the trivial action. 
Let $\sigma = (Z, \cP) \in \Stab_{(\Lambda, v)}(\Db_Y(X))$ be a stability condition. 
For any closed point $p \in A$, let $Y_p \subset Y$ be the fiber of $Y$ over $p$ and define: 
\begin{enumerate}
    \item $v_p \colon \rK_0(\Db_{Y_p}(X)) \to \rK_0(\Db_Y(X)) \xrightarrow{v} \Lambda$ as the natural composition; and  
    \item $\cP_p(\phi) \coloneqq \cP(\phi) \cap \Db_{Y_p}(X)$ for any $\phi \in \R$. 
\end{enumerate}
Then $\sigma_p \coloneqq (Z, \cP_p)$ is a stability condition on $\Db_{Y_p}(X)$ with respect to $(\Lambda, v_p)$, and 
the map
\begin{equation*}
\Stab_{(\Lambda,v)}(\Db_Y(X))\longrightarrow\Stab_{(\Lambda,v_p)}(\Db_{Y_p}(X)), \quad \sigma=(Z,\cP)\mapsto\sigma_p=(Z,\cP_p), 
\end{equation*} 
is holomorphic.
\end{prop}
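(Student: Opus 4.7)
Verification that $\sigma_p = (Z, \cP_p)$ is a stability condition reduces to a single nontrivial claim. The central charge $Z$ is unchanged and $v_p$ factors through $v$ via the inclusion $\Db_{Y_p}(X) \hookrightarrow \Db_Y(X)$, so the slicing axioms~\eqref{enum:slicing1} and \eqref{enum:slicing2} of Definition~\ref{def:slicing}, the pre-stability compatibility, and the support property (with the same quadratic form $Q$) all restrict to $\Db_{Y_p}(X)$ without difficulty. Similarly, holomorphicity of the restriction map will be a formal consequence of Theorem~\ref{thm:Bridgeland} once $\sigma_p$ is known to be a stability condition, since the restriction commutes with the forgetful central-charge maps on source and target, both of which are local biholomorphisms to $\Hom_\Z(\Lambda, \C)$. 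The heart of the matter is therefore axiom~\eqref{enum:slicing3}: every $E \in \Db_{Y_p}(X)$ must admit an HN filtration with factors in $\Db_{Y_p}(X)$.

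I would begin by applying Theorem~\ref{theorem-invariant-stability}, part~\eqref{sigma-invariant-connected-group}, to the action of the connected algebraic group $G = \Pic^0(A)$ on $\Db_Y(X)$ described in Example~\ref{example-Pic0-action}. The equivariance hypothesis on $v$ is precisely what the theorem requires, so $\sigma$ is $\Pic^0(A)$-invariant: each slice $\cP(\phi)$ is stable under the autoequivalence $-\otimes a^*L$ for every $L \in \Pic^0(A)(k)$.

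The crucial step is then the rigidity statement following the method of \cite{FLZ:Albanese}: any $\sigma$-semistable $F \in \Db_Y(X)$ satisfies $F \otimes a^*L \cong F$ for every $L \in \Pic^0(A)(k)$. The idea is that $\{F \otimes a^*L\}_{L}$ is an algebraic family of $\sigma$-semistable objects sharing the same phase and the same class $v(F) \in \Lambda$, and a boundedness argument resting on the support property, combined with the connectedness of $\Pic^0(A)$, forces the family to be constant. Granted this, the projection formula yields $Ra_*F \otimes L \cong Ra_*F$ for every $L \in \Pic^0(A)(k)$, and classical Fourier--Mukai theory on $A$ forces $Ra_*F$ to be supported on a finite subset of $A$ (its Fourier--Mukai transform has cohomology sheaves in $\Pic^0(\hat A)$); by properness of $a$, the set $a(\mathrm{supp}(F)) \subset A$ is finite. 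Because the categories $\Db_{Y_q}(X)$ for distinct closed points $q \in A$ are pairwise orthogonal (as $\mathcal{E}xt$-sheaves between objects with disjoint supports vanish stalkwise), $F$ splits canonically as $F = \bigoplus_q F_q$ with $F_q \in \Db_{Y_q}(X)$, each semistable of the same phase as $F$. Propagating this splitting through the HN filtration $0 = E_0 \to \cdots \to E_m = E$ via orthogonality of the connecting $\Ext^1$-groups, every intermediate $E_i$ inherits a compatible decomposition $E_i = \bigoplus_q E_i^{(q)}$ with $E_i^{(q)} \in \Db_{Y_q}(X)$; since $E = E_m$ lies in $\Db_{Y_p}(X)$ we must have $E_m^{(q)} = 0$ for $q \neq p$, and downward induction then forces $F_q = 0$ for every HN factor and every $q \neq p$, establishing axiom~\eqref{enum:slicing3}. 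The principal obstacle is the rigidity statement itself: proving constancy of the $\Pic^0(A)$-family $\{F \otimes a^*L\}_L$ requires a finite-type / boundedness input for moduli of $\sigma$-semistable objects of fixed numerical class, and carrying this out in the support-restricted setting $\Db_Y(X)$ rather than in the ambient $\Db(X)$ is the main technical point that must be adapted from \cite{FLZ:Albanese}.
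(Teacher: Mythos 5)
Your overall architecture (reduce to axiom~\eqref{enum:slicing3}, invoke Theorem~\ref{theorem-invariant-stability}\eqref{sigma-invariant-connected-group} for the $\Pic^0(A)$-action of Example~\ref{example-Pic0-action}, deduce finiteness of $a(\Supp(-))$ for tensor-invariant objects, then split along the finitely many fibers) matches the paper's, but the pivot of your argument is a statement that is false. You claim that \emph{every} $\sigma$-semistable $F \in \Db_Y(X)$ satisfies $F \otimes a^*L \cong F$ for all $L \in \Pic^0(A)(k)$, to be proved by a boundedness/constancy argument for the family $\{F \otimes a^*L\}_L$. Already for $X = Y = A$ an elliptic curve with the standard stability condition, a line bundle $M$ of degree $1$ is stable, and $\{M \otimes L\}_{L \in \Pic^0(A)}$ is a nonconstant positive-dimensional family of stable objects of the same phase and the same class in $\Lambda$; no boundedness input can force it to be constant. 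So the ``rigidity statement'' you identify as the main technical point cannot be established, and it is also not needed.

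The correct (and simpler) source of tensor-invariance is the object $E$ itself, not semistability of its factors: since $E \in \Db_{Y_p}(X)$ is set-theoretically supported over the single point $p$ and $L$ is trivial on a neighborhood of $p$, one has $E \otimes a^*L \cong E$ directly. Because $\sigma$ is $\Pic^0(A)$-invariant (your first step, which is fine), applying $- \otimes a^*L$ to the HN filtration of $E$ produces the HN filtration of $E \otimes a^*L \cong E$, and \emph{uniqueness} of HN filtrations then forces each factor $F_i$ to satisfy $F_i \otimes a^*L \cong F_i$. Only at this point does the finiteness of $a(\Supp(F_i))$ enter (via the projection formula and the classical Fourier--Mukai fact on $A$, as in your proposal), after which one pins the support down to $\set{p}$ by a $\Hom$-vanishing argument against the triangle $F_1 \to E \to E'$. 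Separately, your holomorphicity argument is incomplete as stated: commuting with the two local biholomorphisms $\mathcal{Z}$ only identifies the restriction map locally with $\mathcal{Z}_p^{-1}\circ\mathcal{Z}$ once you know it is continuous (otherwise it could jump between sheets); the paper supplies this via the inequality $\mathrm{dist}(\sigma_{1,p},\sigma_{2,p}) \leq \mathrm{dist}(\sigma_1,\sigma_2)$, which is immediate from the definition of $\cP_p$.
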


\begin{rema}
    By Remark~\ref{remark-numerical-v-G-equivariant}, the condition that $v$ is $\Pic^0(A)(k)$-equivariant is automatically satisfied if $v$ is numerical. 
\end{rema}

We begin by establishing two preliminary lemmas. 
The first is a relative version of \cite[Lemma~2.12]{FLZ:Albanese}. 
For an object $E \in \Db(X)$, 
we denote by $\Supp(E)$ its support, i.e. the union of the supports of its cohomology sheaves, regarded as a closed subset of $X$ (or equivalently as a subscheme with the reduced induced structure). 

\begin{lemm}\label{lem:FinSupport}
Let $a \colon X \to A$ be a morphism from a projective variety to an abelian variety. 
Let $E\in\Db(X)$ be an object such that $E\otimes a^*L\cong E$ for all $L \in\Pic^0(A)$.
Then $a(\Supp(E)) \subset A$ consists of a finite set of points. 
\end{lemm}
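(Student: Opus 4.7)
The plan is to reduce the relative statement to the absolute case, namely the cited \cite[Lemma~2.12]{FLZ:Albanese}, by pushing forward $E$ along $a$ after a suitable ample twist, and applying the projection formula.

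First, I would reduce to the case where $E$ is a single coherent sheaf $F$. Since $a^{*}L$ is a line bundle, hence flat, tensoring with it is $t$-exact and commutes with taking cohomology sheaves; so each $\cH^i(E)$ satisfies $\cH^i(E)\otimes a^{*}L\cong\cH^i(E)$ for every $L\in\Pic^0(A)$. As $a(\Supp E)=\bigcup_i a(\Supp\cH^i(E))$ is a finite union, it suffices to prove the claim for a single cohomology sheaf $F$. Next, fix a very ample line bundle $H$ on $X$ and replace $F$ by $F_n\coloneqq F\otimes H^{\otimes n}$ for $n\gg 0$. Twisting by the line bundle $H^{\otimes n}$ preserves supports and the hypothesis, so $F_n\otimes a^{*}L\cong F_n$ and $\Supp F_n=\Supp F$. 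Since $a$ is projective and $H$ is $a$-ample, relative Serre vanishing and relative generation by global sections give, for $n$ large, that $R^i a_{*}F_n=0$ for $i>0$ and that the evaluation map $a^{*}a_{*}F_n\twoheadrightarrow F_n$ is surjective; from this one deduces the set-theoretic equality
\[
\Supp(a_{*}F_n)=a(\Supp F_n)=a(\Supp F).
\]
Finally, the projection formula yields $a_{*}F_n\otimes L\cong a_{*}(F_n\otimes a^{*}L)\cong a_{*}F_n$ for every $L\in\Pic^0(A)$, so the absolute case \cite[Lemma~2.12]{FLZ:Albanese}, applied to the coherent sheaf $a_{*}F_n\in\Db(A)$, shows that $\Supp(a_{*}F_n)$ is a finite set of points, which completes the argument.

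The subtle point is the set-theoretic identification $\Supp(a_{*}F_n)=a(\Supp F)$: this fails without the ample twist, as shown by $F=\cO_{\P^1}(-1)$ on $\P^1\to\mathrm{pt}$, whose pushforward vanishes even though $\Supp F=\P^1$. Twisting by $H^{\otimes n}$ with $n$ large is precisely what produces enough global sections of $F$ along each fiber of $a$ for $a_{*}$ to record the correct image of the support, and is the key technical input allowing the reduction to the absolute case.
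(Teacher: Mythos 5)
Your proof is correct and follows essentially the same route as the paper: reduce to a single coherent sheaf, twist by a large power of an ample line bundle so that the pushforward records the image of the support, use the projection formula to transfer the $\Pic^0(A)$-invariance to $a_*F_n$, and invoke the absolute statement on the abelian variety (the paper cites \cite[Proposition 11.8]{Pol:Abelian} where you cite \cite[Lemma~2.12]{FLZ:Albanese}, but these serve the same purpose).
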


\begin{proof}
The condition $E \otimes a^*L \cong E$ is inherited by the cohomology sheaves of $E$, so we may reduce to the case where $E$ is a coherent sheaf. 
In this case, we observe that by Serre vanishing~\citestacks{0D39} we have
\[
a(\Supp(E))=\Supp(a_*(E\otimes M^{\otimes n}))
\]
for all $n\gg0$, where $M$ is an ample line bundle on $X$.

On the other hand, by the projection formula and our assumption on $E$, we have
\[
a_*(E\otimes M^{\otimes n})\otimes L \cong a_*(E\otimes M^{\otimes n}) 
\]
for all $L \in\Pic^0(A)$.
We can thus apply \cite[Proposition 11.8]{Pol:Abelian} to deduce that $a_*(E\otimes M^{\otimes n})$ is supported on finitely many points, as we wanted.
\end{proof}

\begin{lemm}\label{lem:HNfactors}
In the setup of Proposition~\ref{prop:Restriction}, 
the Harder--Narasimhan factors with respect to~$\sigma$ of any object $E \in \Db_{Y_p}(X)$ are contained in $\Db_{Y_p}(X)$.  
\end{lemm}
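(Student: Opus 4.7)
The plan is to exploit the $\Pic^0(A)$-action on $\Db_Y(X)$ of Example~\ref{example-Pic0-action} together with the uniqueness of Harder--Narasimhan filtrations, in the spirit of the analogous argument in~\cite{FLZ:Albanese}. Since $v$ is $\Pic^0(A)(k)$-equivariant for the trivial action on $\Lambda$ and $\Pic^0(A)$ is a connected algebraic group, Theorem~\ref{theorem-invariant-stability}\eqref{sigma-invariant-connected-group} applies and $\sigma$ is $\Pic^0(A)$-invariant; in particular, for every $\sigma$-semistable $F$ of phase $\phi$ and every $L \in \Pic^0(A)(k)$, the object $F \otimes a^*L$ is $\sigma$-semistable of the same phase $\phi$.

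The first key step is to show that $E \otimes a^*L \cong E$ in $\Db_Y(X)$ for every $E \in \Db_{Y_p}(X)$ and every $L \in \Pic^0(A)(k)$. Since line bundles are Zariski-locally trivial, there is an open subset $W \subset A$ containing $p$ with $L|_W \cong \cO_W$; setting $V = a^{-1}(W)$, we have $Y_p \subset V$ and $a^*L|_V \cong \cO_V$, whence $(E \otimes a^*L)|_V \cong E|_V$ in $\Db_{Y_p}(V)$. Writing $j \colon V \hookrightarrow X$ for the open immersion, excision supplies a natural equivalence $j^* \colon \Db_{Y_p}(X) \xrightarrow{\sim} \Db_{Y_p}(V)$ with quasi-inverse $j_!$, and the claim follows.

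Combined with the $\Pic^0(A)$-invariance of $\sigma$ and the uniqueness of HN filtrations, this yields $F_i \otimes a^*L \cong F_i$ for every HN factor $F_i \in \cP(\phi_i)$ of $E$ and every $L \in \Pic^0(A)(k)$. Lemma~\ref{lem:FinSupport} then forces $a(\Supp(F_i))$ to be a finite set $\{q_{i,1}, \ldots, q_{i,k_i}\}$. Since the fibers $X_{q_{i,j}}$ are pairwise disjoint closed subsets of $X$, a standard Mayer--Vietoris argument for local cohomology yields a direct sum decomposition $F_i \cong \bigoplus_j F_i^{q_{i,j}}$ with $F_i^{q_{i,j}} \in \Db_{Y_{q_{i,j}}}(X)$, and each summand lies in $\cP(\phi_i)$ since this category is closed under direct summands.

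To conclude, apply the local cohomology functor $R\Gamma_{Y_p}$ to the HN filtration $0 = E_0 \to \cdots \to E_m = E$ of $E$. Since $R\Gamma_{Y_p}$ kills the summands $F_i^{q_{i,j}}$ with $q_{i,j} \neq p$ and fixes those with $q_{i,j} = p$, we obtain a filtration of $E = R\Gamma_{Y_p}(E)$ whose cones $F_i^p$ all lie in $\cP(\phi_i) \cap \Db_{Y_p}(X)$. Discarding zero cones produces a filtration of $E$ in $\Db_Y(X)$ whose cones are $\sigma$-semistable of strictly decreasing phases---that is, another HN filtration of $E$; by uniqueness it must coincide with the original, forcing $F_i = F_i^p \in \Db_{Y_p}(X)$ for every $i$. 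The main obstacle in the argument is the second step above, where the global nontriviality of $a^*L$ is bypassed by combining the Zariski-local triviality of $L$ near $p$ with the excision equivalence $\Db_{Y_p}(X) \simeq \Db_{Y_p}(V)$.
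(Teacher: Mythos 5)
Your proof is correct, and its first half is the same as the paper's: establish $E \otimes a^*L \cong E$ (the paper asserts this without the excision argument you supply, which is a fine justification --- though the quasi-inverse of $j^*$ on $\Db_{Y_p}(X) \simeq \Db_{Y_p}(V)$ is more naturally written as $j_*$, which agrees with $j_!$ here), invoke Theorem~\ref{theorem-invariant-stability}\eqref{sigma-invariant-connected-group} to get $\Pic^0(A)$-invariance of the HN filtration, apply Lemma~\ref{lem:FinSupport} to each factor, and decompose each $F_i$ as a direct sum over the finitely many fibers supporting it. Where you diverge is the endgame. The paper argues by induction on the length of the filtration: it suffices to kill the summands $F_{1,q}$, $q\neq p$, of the top factor, and this is done by a Hom-vanishing computation --- for $G$ semistable of phase $\phi_{\cP}(F_1)$ supported on $Y_q$, the triangle $F_1 \to E \to E'$ forces $\Hom(G,F_1)=0$ since $\Hom(G,E'[-1])=0$ for phase reasons and $\Hom(G,E)=0$ for support reasons. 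You instead apply the exact ``project onto the $p$-component'' functor to the entire filtration at once and conclude by uniqueness of HN filtrations. Both are valid; yours trades the induction and the Hom computation for the observation that the projection is functorial and exact on objects supported over finitely many points of $A$ (which holds because there are no morphisms between objects supported on disjoint closed subsets), plus the closure of $\cP(\phi)$ under direct summands. One small point worth making explicit in your write-up: after matching the two HN filtrations you get an abstract isomorphism $F_i \cong F_i^p$, and the cleanest way to finish is to note that $\Supp(F_i)=\Supp(F_i^p)\subseteq Y_p$, rather than arguing that the complementary summands vanish.
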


\begin{proof}
Note that because $E$ is supported over $p \in A$, we have $E \otimes a^*L \cong E$ for all $L \in \Pic^0(A)$. 
Since by Theorem~\ref{theorem-invariant-stability}\eqref{sigma-invariant-connected-group} the stability condition $\sigma$ is invariant under the action of $\Pic^0(A)$, 
we conclude that the Harder--Narasimhan filtration of 
$E$ with respect to $\sigma$ 
is preserved by the action of $\Pic^0(A)$, and hence the Harder--Narasimhan factors $F_1, \dots, F_m$ of $E$ also satisfy $F_i \otimes a^*L \cong F_i$ for all $i$ and $L \in \Pic^0(A)$.  
By Lemma~\ref{lem:FinSupport}, we conclude that $a(\Supp(F_i))$ is a finite set of points for each $i$. 

We claim that in fact $a(\Supp(F_i)) = \set{p}$ for every $i$, or in other words $F_i \in \Db_{Y_p}(X)$. 
By induction on the length $m$ of the Harder--Narasimhan filtration of $E$, it suffices to show that the factor $F_1$ is contained in $\Db_{Y_p}(X)$. 
Since $\Gamma = a(\Supp(F_1))$ is a finite set of points, $F_1$ decomposes as a direct sum $F_1 = \bigoplus_{q \in \Gamma} F_{1, q}$, where $F_{1,q}$ is set-theoretically supported on $Y_q$. 
Thus, it suffices to show that if $G \in \cP(\phi)$, where $\phi = \phi_{\cP}(F_1)$, is supported on $Y_q$ for a closed point $q \in A \setminus \set{p}$, then $\Hom(G,F_1) = 0$. 
We have a distinguished triangle 
\begin{equation*}
    F_1 \to E \to E'
\end{equation*}
where $E' \in \cP((-\infty, \phi))$, which gives an exact sequence 
\begin{equation*}
    \cdots \to \Hom(G, E'[-1]) \to \Hom(G, F_1) \to \Hom(G, E) \to \cdots. 
\end{equation*}
The first term vanishes since $E'[-1] \in \cP(-\infty, \phi-1)$, and the second term vanishes since $G$ and $E$ are supported on different fibers of the morphism $a \colon X \to A$. 
\end{proof}

\begin{proof}[Proof of Proposition~\ref{prop:Restriction}]
First we claim that $\cP_p$ is a slicing of $\Db_{Y_p}(X)$. 
Indeed, conditions~\eqref{enum:slicing1} and~\eqref{enum:slicing2} of Definition~\ref{def:slicing} are immediate, while condition~\eqref{enum:slicing3} follows from Lemma~\ref{lem:HNfactors}. 
The fact that $\sigma_p$ is a pre-stability condition with respect to $(\Lambda, v_p)$ which satisfies the support property then follows directly from the fact that $\sigma$ is a stability condition with respect to $(\Lambda, v)$. Finally, the holomorphicity of the map $\Stab_{(\Lambda,v)}(\Db_Y(X)) \to \Stab_{(\Lambda,v_p)}(\Db_{Y_p}(X))$ follows from the observation that, by construction, we have 
\begin{equation*}
\mathrm{dist}(\sigma_{1,p},\sigma_{2,p})\leq \mathrm{dist}(\sigma_1,\sigma_2)
\end{equation*}
for all $\sigma_1,\sigma_2\in\Stab_{(\Lambda,v)}(\Db_Y(X))$.
\end{proof}

\subsection{Restricting stability conditions to fibers over curves}
\label{subsec:Curves}

If $X$ is a projective variety equipped with a morphism to a smooth projective curve $C$, then by  Example~\ref{example-Pic0-action} the Picard variety $\Pic^0(C)$ acts on $\Db(X)$, or more generally on $\Db_Y(X)$ for any closed subset $Y \subset X$.
Our next goal is to prove the following  strengthening of Proposition~\ref{prop:Restriction} when the genus of $C$ is positive. 

\begin{prop}
\label{prop:RestInjective}
    Let $f \colon X \to C$ be a morphism from a projective variety to a smooth projective curve of positive genus over an algebraically closed field $k$, and let $Y \subset X$ be a closed subset.  
Let $v \colon \rK_0(\Db_Y(X)) \to \Lambda$ be a homomorphism to a finite rank free abelian group such that $v$ is $\Pic^0(C)(k)$-equivariant when $\Lambda$ is equipped with the trivial action. 
Let $\sigma = (Z, \cP) \in \Stab_{(\Lambda, v)}(\Db_Y(X))$ be a stability condition. 
For any closed point $p \in C$, let $Y_p \subset Y$ be the fiber of $Y$ over $p$ and define: 
\begin{enumerate}
    \item $v_p \colon \rK_0(\Db_{Y_p}(X)) \to \rK_0(\Db_Y(X)) \xrightarrow{v} \Lambda$ as the natural composition; and  
    \item $\cP_p(\phi) \coloneqq \cP(\phi) \cap \Db_{Y_p}(X)$ for any $\phi \in \R$. 
\end{enumerate}
Then $\sigma_p \coloneqq (Z, \cP_p)$ is a stability condition on $\Db_{Y_p}(X)$ with respect to $(\Lambda, v_p)$, and 
the map
\begin{equation}
\label{stab-restriction-C-p}
\Stab_{(\Lambda,v)}(\Db_Y(X))\longrightarrow\Stab_{(\Lambda,v_p)}(\Db_{Y_p}(X)), \quad \sigma=(Z,\cP)\mapsto\sigma_p=(Z,\cP_p), 
\end{equation} 
is holomorphic. 

Moreover, if $\sigma_1 = (Z_1, \cP_1), \sigma_2 = (Z_2, \cP_2) \in \Stab_{(\Lambda, v)}(\Db_Y(X))$ are two stability conditions such that $Z_1 = Z_2$ and $\sigma_{1,p} = \sigma_{2,p}$ for all closed points $p \in C$, then $\sigma_1 = \sigma_2$. 
In other words, 
the maps~\eqref{stab-restriction-C-p} for varying $p$ combine into an injection 
\begin{equation*}
\Stab_{(\Lambda,v)}(\Db_Y(X)) \longrightarrow \prod_{p \in C(k)} \Stab_{(\Lambda,v_p)}(\Db_{Y_p}(X)). 
\end{equation*} 
\end{prop}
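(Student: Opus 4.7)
The first assertion -- that $\sigma_p$ is a well-defined stability condition and that the restriction map is holomorphic -- will follow immediately from Proposition~\ref{prop:Restriction}. Since $C$ has positive genus, the Abel--Jacobi map $c \colon C \hookrightarrow \Jac(C)$ is a closed embedding, so the composition $a \coloneqq c \circ f \colon X \to \Jac(C)$ is a morphism to an abelian variety whose fiber over $c(p)$ coincides with $X_p$ for every closed point $p \in C$. Moreover, under the canonical identification $\Pic^0(\Jac(C)) \cong \Pic^0(C)$, the $\Pic^0(\Jac(C))$-action on $\Db_Y(X)$ induced by $a$ agrees with the $\Pic^0(C)$-action induced by $f$. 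Hence the equivariance hypothesis of Proposition~\ref{prop:Restriction} is satisfied, and applying it to $a$ gives the claim.

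For the injectivity, let $\sigma_1, \sigma_2 \in \Stab_{(\Lambda, v)}(\Db_Y(X))$ with $Z_1 = Z_2$ and $\sigma_{1,p} = \sigma_{2,p}$ for every closed $p \in C$. By Lemma~\ref{lem:SameCentralCharge}, it suffices to prove $\cP_2((0,1]) \subset \cP_1((-1, 2])$, and by Lemma~\ref{lem:DistOfHearts} with $a_- = -1$ and $a_+ = 2$ this reduces to constructing, for each nonzero $E \in \cP_1((0,1])$, objects
\[
E^- \in \cP_1((-\infty, 2]) \cap \cP_2((-\infty, 2]), \qquad E^+ \in \cP_1((-1, \infty)) \cap \cP_2((-1, \infty)),
\]
with $\Hom(E, E^-) \neq 0$ and $\Hom(E^+, E) \neq 0$. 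The natural candidates are shifts of the fiber restriction $E|_{X_p} \coloneqq E \otimes \cO_{X_p}$ for a closed point $p$ in the nonempty image $f(\Supp(E))$: the distinguished triangle obtained by tensoring $0 \to f^*\cO_C(-p) \to \cO_X \to \cO_{X_p} \to 0$ with $E$ furnishes a natural morphism $E \to E|_{X_p}$, and the triangle obtained from $0 \to \cO_X \to f^*\cO_C(p) \to \cO_{X_p} \to 0$ furnishes a boundary morphism $E|_{X_p}[-1] \to E$. Since $E|_{X_p}$ lies in $\Db_{Y_p}(X)$, the assumption $\sigma_{1,p} = \sigma_{2,p}$ forces its HN phases in $\cP_1$ and $\cP_2$ to coincide, so both two-slicing constraints on $E^\pm$ collapse to a single $\cP_1$-phase bound.

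The main obstacle is then establishing the phase bounds $\phi^+_{\cP_1}(E|_{X_p}) \leq 2$ and $\phi^-_{\cP_1}(E|_{X_p}) > 0$, which from the defining triangles reduce to bounding $\phi^+_{\cP_1}$ and $\phi^-_{\cP_1}$ of $E \otimes f^*\cO_C(\pm p)$. The crucial input is Theorem~\ref{theorem-invariant-stability}\eqref{sigma-invariant-connected-group}, which ensures that $\sigma_1$ and $\sigma_2$ are both $\Pic^0(C)$-invariant: since $\cO_C(p' - p) \in \Pic^0(C)$ for any two closed points, the HN phases of $E \otimes f^*\cO_C(\pm p)$ depend only on the degree of the twisting line bundle and not on the chosen point. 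Combining this invariance with the freedom to vary $p$, the equality $Z_1 = Z_2$, and the fiberwise equality of slicings, one should extract the required integer bound on how far tensoring by a degree $\pm 1$ pullback can displace extremal phases. Carrying out this last step -- turning the linear data $Z_1 = Z_2$, the $\Pic^0(C)$-symmetry, and the fiberwise slicing agreement into a sharp phase bound on the triangle above -- is the technical heart of the argument.
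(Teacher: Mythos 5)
Your setup is correct and matches the paper: the first assertion via the Abel--Jacobi embedding and Proposition~\ref{prop:Restriction}, the reduction of injectivity to Lemmas~\ref{lem:SameCentralCharge} and~\ref{lem:DistOfHearts}, the candidates $E^- = E_p$ and $E^+ = E_p[-1]$ with $E_p = E \otimes f^*k(p)$, the Hom non-vanishing via adjunction and Grothendieck duality, and the observation that $\sigma_{1,p}=\sigma_{2,p}$ collapses the two-slicing constraints to a single $\cP_1$-phase bound. But the proof has a genuine gap: you never establish the phase bound $E_p \in \cP_1((0,2])$, and you say explicitly that this ``technical heart'' remains to be carried out. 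That bound is exactly Lemma~\ref{lem:Tensor} in the paper, and it is the one nontrivial input; without it the argument is incomplete.

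Moreover, the route you sketch for closing the gap points in the wrong direction. You propose to deduce the bound on $E_p$ by ``bounding $\phi^{\pm}_{\cP_1}$ of $E \otimes f^*\cO_C(\pm p)$'' and then extracting an integer displacement bound from $Z_1 = Z_2$, the $\Pic^0(C)$-symmetry, and the fiberwise agreement of slicings. There is no a priori control on the phases of $E \otimes f^*\cO_C(p)$ (a degree-one twist need not stay in any bounded window of phases), and the actual argument needs neither $Z_1 = Z_2$ nor the comparison with $\sigma_2$: it is a statement about the single stability condition $\sigma_1$. The mechanism is a support-vanishing argument. From the triangle $E \to E \otimes f^*\cO_C(p) \to E_p$ with $E \in \cA_1 = \cP_1((0,1])$, the long exact sequence gives $\cH^i_{\cA_1}(E_p) \cong \cH^i_{\cA_1}(E \otimes f^*\cO_C(p))$ for $i \neq 0,-1$. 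By Theorem~\ref{theorem-invariant-stability}, tensoring by degree-zero line bundles commutes with $\cH^i_{\cA_1}$, so $\Supp(\cH^i_{\cA_1}(E_p)) = \Supp(\cH^i_{\cA_1}(E_q))$ for all $q$ and all $i \neq 0,-1$; but Lemma~\ref{lem:HNfactors} forces $\Supp(\cH^i_{\cA_1}(E_p)) \subset Y_p$, so taking $q \neq p$ these cohomologies vanish, giving $E_p \in \cP_1((0,2])$. You should supply this argument (or an equivalent one); as written, the decisive step is asserted rather than proved, and the hinted strategy would not obviously succeed.
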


\begin{rema}\label{remark-Pic0-equivariant}
    Again by Remark~\ref{remark-numerical-v-G-equivariant}, the condition that $v$ is $\Pic^0(C)(k)$-equivariant is automatically satisfied if $v$ is numerical. 
\end{rema}

The first part of the proposition follows directly from Proposition~\ref{prop:Restriction} by considering the composition 
\begin{equation*} 
a \colon X \xrightarrow{f} C \xrightarrow{\mathrm{AJ}} \Jac(C), 
\end{equation*} 
where the second morphism is the Abel--Jacobi map, which is an embedding by our assumption that the genus of $C$ is positive. 
The following lemma is the key ingredient for the proof of the second part of the proposition. 

\begin{lemm}\label{lem:Tensor}
Let $\sigma = (Z, \cP) \in \Stab_{(\Lambda, v)}(\Db_Y(X))$ be a stability condition as in the statement of Proposition~\ref{prop:RestInjective}. 
Then for any object $E \in \cA \coloneqq \cP((0,1])$ and closed point $p \in C$, if we set $E_p = E \otimes f^*k(p)$ where $k(p)$ is the skyscraper sheaf of $p$, 
then $
E_p  \in \cP((0,2])$. 
\end{lemm}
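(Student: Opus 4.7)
The plan is to reduce the claim to an equivalent amplitude bound on $F \coloneqq E \otimes f^*\cO_C(-p)$, and then to establish that bound by Hom computations. Pulling back the short exact sequence $0 \to \cO_C(-p) \to \cO_C \to k(p) \to 0$ along $f$ and tensoring with $E$ gives the distinguished triangle
\[
F \to E \to E_p \to F[1].
\]
Applying the cohomology long exact sequence for the heart $\cA$ to this triangle, together with $E \in \cA$, the condition $E_p \in \cP((0,2])$ (that is, $\cA$-cohomology concentrated in degrees $\{-1, 0\}$) translates into the equivalent statement that $F$ has $\cA$-cohomology concentrated in degrees $\{0, 1\}$. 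So it suffices to establish this amplitude statement for $F$.

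First, I would apply Proposition~\ref{prop:Restriction} to the composition $X \xrightarrow{f} C \hookrightarrow \Jac(C)$, using positive genus to realize the Abel--Jacobi map as a closed embedding, so that Proposition~\ref{prop:Restriction} applies via the composed morphism to the abelian variety $\Jac(C)$. This furnishes the restricted stability condition $\sigma_p$ on $\Db_{Y_p}(X)$, and by Lemma~\ref{lem:HNfactors} any HN factor $G$ of the fiber-supported object $E_p$ is itself supported on $X_p$. In particular, $G \otimes f^*L \cong G$ for any line bundle $L$ on $C$, via trivializing $L$ on the support of $G$.

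Next, I would rule out HN factors of $E_p$ outside of $\cP((0,2])$ by Hom-vanishing arguments against the triangle. For the upper bound, suppose $F_1 \in \cP(\phi_1)$ is the top HN factor of $E_p$ with $\phi_1 > 2$. The HN inclusion provides a nonzero map $F_1 \hookrightarrow E_p$, so $\Hom(F_1, E_p) \neq 0$. Applying $\Hom(F_1, -)$ to the triangle and using $\Hom(F_1, E) = \Hom(F_1, E[1]) = 0$ (forced by $\phi_1 > 2$ and $E \in \cP((0,1])$), one obtains $\Hom(F_1, E_p) \cong \Hom(F_1, F[1])$. Tensor--hom adjunction combined with the isomorphism $F_1 \otimes f^*\cO_C(p) \cong F_1$ coming from fiber support gives
\[
\Hom(F_1, F[1]) = \Hom(F_1 \otimes f^*\cO_C(p), E[1]) \cong \Hom(F_1, E[1]) = 0,
\]
a contradiction, so $\phi^+(E_p) \leq 2$. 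A dual argument applied to the HN surjection $E_p \twoheadrightarrow F_m$ onto the minimal HN factor is intended to treat the lower bound.

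The main obstacle I anticipate lies in securing the strict lower bound $\phi^-(E_p) > 0$ rather than the naive $\phi^-(E_p) > -1$: a straightforward dualization of the Hom computation above yields only $\phi^-(E_p) \geq \phi^-(E) - 1$, which is strictly greater than $-1$ but not automatically positive, since $\phi^-(E)$ can be arbitrarily close to $0$ for a general $E \in \cA$. Closing this gap will likely require a more delicate use of the triangle, exploiting the observation that the natural map $f^*\cO_C(-p) \to \cO_X$ restricts to zero on $X_p$ (so certain maps in the Hom long exact sequence for $F_m$ on $X_p$ must vanish), combined with the $\Pic^0(C)$-invariance of $\sigma$ provided by Theorem~\ref{theorem-invariant-stability}, which constrains the admissible phases of fiber-supported objects.
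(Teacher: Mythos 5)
Your route is genuinely different from the paper's. The paper never touches Harder--Narasimhan factors of $E_p$: it uses the triangle $E \to E\otimes f^*\cO_C(p) \to E_p$ to identify $\cH^i_{\cA}(E_p) \cong \cH^i_{\cA}(E\otimes f^*\cO_C(p))$ for $i \neq 0,-1$, then exploits $\Pic^0(C)$-invariance of the cohomology functors (Theorem~\ref{theorem-invariant-stability}) to conclude that $\Supp(\cH^i_{\cA}(E_p)) = \Supp(\cH^i_{\cA}(E_q))$ for every closed point $q\in C$ and $i\neq 0,-1$; since Lemma~\ref{lem:HNfactors} forces these supports into the disjoint fibers $Y_p$ and $Y_q$, they vanish. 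You instead bound the phases of the HN factors of $E_p$ by Hom-vanishing against the triangle $F \to E \to E_p$, using that those factors are fiber-supported so that twisting by $f^*\cO_C(p)$ acts trivially on them. This is a viable, more hands-on alternative, and your upper-bound argument is correct and complete. (Your opening reduction to an amplitude bound on $F$ is never actually used afterwards and can be dropped.)

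The obstacle you flag for the lower bound is not there: the dual computation, carried out symmetrically, gives $\phi^-(E_p) \geq \phi^-(E) > 0$, not $\phi^-(E)-1$. Concretely, let $F_m \in \cP(\phi_m)$ be the bottom HN factor of $E_p$, so that $\Hom(E_p, F_m) \neq 0$ via the HN projection. Applying $\Hom(-,F_m)$ to $F \to E \to E_p \to F[1]$ yields the exact sequence
\[
\Hom(F[1], F_m) \longrightarrow \Hom(E_p, F_m) \longrightarrow \Hom(E, F_m).
\]
The right-hand term is nonzero only if $\phi_m \geq \phi^-(E)$. For the left-hand term, adjunction together with $F_m \otimes f^*\cO_C(p) \cong F_m$ gives
\[
\Hom(F[1], F_m) = \Hom\bigl(E[1], F_m \otimes f^*\cO_C(p)\bigr) = \Hom(E[1], F_m),
\]
which is nonzero only if $\phi_m \geq \phi^-(E)+1$. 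Hence $\Hom(E_p,F_m)\neq 0$ forces $\phi_m \geq \phi^-(E) > 0$: the shift $[1]$ on $F$ works in your favor here rather than against you, which is presumably where your bookkeeping slipped. With this observation your proof closes, and the speculative machinery in your final paragraph is unnecessary.
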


\begin{proof}
Let $\cH^{i}_{\cA}$ denote the cohomology functors with respect to t-structure underlying $\sigma$ with heart $\cA$. 
The statement of the lemma is equivalent to the assertion that the cohomology objects $\cH^i_{\cA}(E_p)$ vanish for all $i \neq 0,-1$. 

Pulling back the exact sequence 
\[
0\longrightarrow\cO_C\longrightarrow\cO_C(p)\longrightarrow k(p) \longrightarrow 0
\]
on $C$ to $X$ via $f$ and tensoring by $E$, we get a distinguished triangle
\begin{equation*}
E\longrightarrow E\otimes f^*\cO_C(p)\longrightarrow E_p. 
\end{equation*} 
As $E \in \cA$, the long exact sequence on cohomology associated to the above triangle gives an isomorphism  
\[
\cH_{\cA}^i(E\otimes f^*\cO_C(p))\cong\cH_{\cA}^i(E_p) 
\]
for all $i\neq0,-1$. 
Theorem~\ref{theorem-invariant-stability} implies that tensoring by any $L \in \Pic^0(C)(k)$ commutes with the cohomology functors $\cH_{\cA}^i$, so for any closed point $q \in C$ we find 
\[
\cH_{\cA}^i(E\otimes f^*\cO_C(p)) \cong \cH_{\cA}^i(E\otimes f^*\cO_C(q)) \otimes f^*\cO_C(p-q)
\]
for all $i\in\Z$. 
Combined with the isomorphism above, by taking support we find that 
\begin{equation*} 
\Supp(\cH_{\cA}^i(E_p)) = \Supp(\cH_{\cA}^i(E_q)) 
\end{equation*} 
for all closed points $q \in C$ and $i \neq 0, -1$. 

On the other hand, 
applying  Lemma~\ref{lem:HNfactors} to $a \colon X \to C \hookrightarrow \Jac(C)$, we deduce that $\Supp(\cH_{\cA}^i(E_p))\subset Y_p$ 
and $\Supp(\cH_{\cA}^i(E_q)) \subset Y_q$
for all $i\in\Z$. 
Taking $q \neq p$ in the above analysis, we then find that the object $\cH_{\cA}^i(E_p)$ is supported on different fibers and hence must vanish for all $i \neq 0,-1$. 
\end{proof}

\begin{proof}[Proof of Proposition~\ref{prop:RestInjective}]
As we already noted, the first part of the proposition follows by applying Proposition~\ref{prop:Restriction} to the composition $a \colon X \to C \hookrightarrow \Jac(C)$. 

If the image of $f$ is a point $p_0$, then $Y = Y_{p_0}$ and the second part of the proposition is obvious. 
Thus we may assume that $f$ is surjective and hence flat. 
In this case, for any $E \in \Db(X)$, by base change the object $E_p \coloneqq E \otimes f^*k(p)$ can also be described as $E_p = i_{p*}i_p^*E$, where $i_p \colon X_p \to X$ is the inclusion of the fiber of $X$ over $p \in C$. 

Let $\sigma = (Z_1, \cP_1)$ and $\sigma_2 = (Z_2, \cP_2)$ be as in the second part of the proposition. 
As $Z_1 = Z_2$, by Lemma~\ref{lem:SameCentralCharge} to show that $\sigma_1 = \sigma_2$ it suffices to check that $\cP_2((0,1]) \subset \cP_1((-1,2])$. 
In turn, by Lemma~\ref{lem:DistOfHearts}, for this it suffices to check that for any $0 \neq E \in \cP_1((0,1])$, there exist objects
\begin{equation*}
E^{-} \in \cP_1((-\infty,2]) \cap \cP_2((-\infty, 2]) \quad \text{and} \quad 
E^+ \in \cP_{1}((-1,\infty)) \cap \cP_2((-1, \infty))
\end{equation*}
such that $\Hom(E, E^{-})$ and $\Hom(E^+,E)$ are nonvanishing. 

If $E \in \cP_1((0,1])$ is nonzero, there exists a closed point $p \in C$ such that $E_p \in \Db_{Y_p}(X)$ is nonzero. 
We claim that we may take $E^- = E_p$ and $E^+ = E_p[-1]$ above. 
By Lemma~\ref{lem:Tensor}, we have $E_p \in \cP_1((0,2])$. 
By assumption $\sigma_{1,p} = \sigma_{2,p}$, so for any interval $I$ the subcategories $\cP_{i,p}(I) = \cP_{i}(I) \cap \Db_{Y_p}(X)$ for $i=1,2$ coincide. Thus 
\begin{equation*}
    E_p \in \cP_1((0,2]) \cap \cP_2((0,2]) \quad \text{and} \quad 
    E_p[-1] \in \cP_1((-1,1]) \cap \cP_2((-1,1]). 
\end{equation*}
Moreover, since $E_p = i_{p*}i_p^*E$, we find that 
\begin{equation*}
    \Hom(E, E_p) = \Hom(i_p^*E, i_p^*E) \neq 0. 
\end{equation*}
Further, as $i_{p} \colon X_p \to X$ is the inclusion of a Cartier divisor and $\cO_X(X_p) \cong f^* \cO_C(p)$ restricts trivially to $X_p$, by Grothendieck duality the right adjoint of $i_{p*}$ is given by $i_p^*[-1]$ and we find that 
\begin{equation*}
    \Hom(E_p[-1], E) = \Hom(i_p^*E[-1], i_p^*E[-1]) \neq 0. 
\end{equation*}
This verifies that $E^- = E_p$ and $E^+ = E_p[-1]$ satisfy the required conditions. 
\end{proof}

\begin{rema}\label{rmk:RestInjective}
Later we will use the following slight amplification of Proposition~\ref{prop:RestInjective}. 
Let $f \colon X \to C$ be a morphism from a projective variety to a smooth projective curve of positive genus over an algebraically closed field $k$, and let $Y \subset X$ be a closed subset. 
Suppose that $\sigma_1=(Z_1,\cP_1)\in\Stab_{(\Lambda_1,v_1)}(\Db_Y(X))$ and $\sigma_2=(Z_2,\cP_2)\in\Stab_{(\Lambda_2,v_2)}(\Db_Y(X))$, where $v_1$ and $v_2$ are both $\Pic^0(C)(k)$-equivariant when $\Lambda_1$ and $\Lambda_2$ are equipped with the trivial action (which holds, for instance, if both are numerical by Remark~\ref{remark-Pic0-equivariant}), but the pairs $(\Lambda_1,v_1)$ and $(\Lambda_2,v_2)$ 
do not necessarily coincide.
If we assume that
\[
Z_1\circ v_1 = Z_2\circ v_2\colon \rK_0(\Db_Y(X))\longrightarrow\C
\]
and that there is an equality of slicings $\cP_{1,p} = \cP_{2,p}$ for all closed points $p \in C$, 
then we can still conclude that there is an equality $\cP_1 = \cP_2$ of slicings. 

Indeed, by Remark~\ref{rmk:Surjectivity}, we can first assume that $v_1$ and $v_2$ are surjective. Then, we can use Remark~\ref{rmk:CompareSupportProperty} and consider the stability conditions $\overline{\sigma}_1,\overline{\sigma}_2\in\Stab_{(\overline{\Lambda},\overline{v})}(\cD)$, where $\overline{\Lambda}$ denotes the image of $Z_1\circ v_1=Z_2\circ v_2$ in $\C$, seen as an abstract finitely generated free abelian group.
Then, $\overline{Z}_1=\overline{Z}_2$ and $\overline{\sigma}_{1,p}=\overline{\sigma}_{2,p}$, for all closed points $p\in C$.
The conclusion now follows from Proposition~\ref{prop:RestInjective}.
\end{rema}

\subsection{Stability conditions on categories supported at a point} 

For use below, we record a simple observation about stability conditions on the subcategory 
of objects supported at a closed point. 

\begin{lemm} \label{lemma-stability-DxX}
    Let $X$ be a variety with a closed point $x \in X$. 
    Let $\sigma = (Z, \cP)$ be a pre-stability condition on $\Db_x(X)$ with respect to a homomorphism $v \colon \rK_0(\Db_x(X)) \to \Lambda$. 
    If the skyscraper sheaf $k(x)$ is $\sigma$-semistable, then the slicing $\cP$ of $\sigma$ is uniquely determined by the phase $\phi_{\cP}(k(x))$; 
    explicitly, for $\phi \in \R$ we have 
    \begin{equation*}
        \cP(\phi) = 
        \begin{cases}
        \langle k(x)[n] \rangle & \text{if $\phi = \phi_{\cP}(k(x)) + n$ for some $n \in \Z$}, \\ 
        0 & \text{if $\phi \notin \phi_{\cP}(k(x)) + \Z$}, 
        \end{cases}
    \end{equation*}
    where $\langle k(x)[n] \rangle \subset  \Db_x(X)$ denotes the extension closed subcategory generated by $k(x)[n]$. 
\end{lemm}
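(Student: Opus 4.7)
The plan is to exploit that every coherent sheaf set-theoretically supported at $x$ is an artinian $\cO_{X,x}$-module, hence an iterated extension in $\Coh(X)$ of copies of $k(x)$; consequently every object of $\Db_x(X)$ is built from shifts of $k(x)$. Set $\phi_0 \coloneqq \phi_{\cP}(k(x))$.

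For the easy containment, I will first check that $\langle k(x)[n] \rangle \subseteq \cP(\phi_0 + n)$ for every $n \in \Z$. By Definition~\ref{def:slicing}\eqref{enum:slicing1}, $k(x)[n] \in \cP(\phi_0 + n)$; and $\cP(\phi)$ is closed under extensions for any $\phi$, since given a triangle $A \to E \to B$ with $A, B \in \cP(\phi)$, this triangle is itself a (trivial) Harder--Narasimhan filtration of $E$, forcing $E \in \cP(\phi)$ by uniqueness of HN filtrations.

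For the reverse inclusion, and to rule out phases outside $\phi_0 + \Z$, I will take a nonzero $E \in \cP(\phi)$ and let $\ell \leq m$ be the smallest and largest integers with $\cH^{\bullet}(E) \neq 0$ in the standard t-structure on $\Db(X)$. Each $\cH^i(E)$ is an artinian sheaf at $x$, so one can pick a surjection $\cH^m(E) \twoheadrightarrow k(x)$ and a subobject inclusion $k(x) \hookrightarrow \cH^{\ell}(E)$ in $\Coh(X)$, then compose with the standard truncation morphisms $E \to \cH^m(E)[-m]$ and $\cH^{\ell}(E)[-\ell] \to E$ to produce morphisms $f \colon E \to k(x)[-m]$ and $g \colon k(x)[-\ell] \to E$. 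These are nonzero because applying $\cH^m$ (resp.\ $\cH^{\ell}$) to $f$ (resp.\ $g$) recovers the original surjection (resp.\ inclusion). Since $k(x)[-m]$ and $k(x)[-\ell]$ are semistable of phases $\phi_0 - m$ and $\phi_0 - \ell$, Definition~\ref{def:slicing}\eqref{enum:slicing2} yields $\phi_0 - \ell \leq \phi \leq \phi_0 - m$, which combined with $\ell \leq m$ forces $\ell = m$ and $n \coloneqq \phi - \phi_0 = -m \in \Z$. Consequently $E \cong F[n]$ for some artinian sheaf $F$ supported at $x$, and a composition series of $F$ in $\Coh(X)$ presents $E$ as an iterated extension of copies of $k(x)[n]$, proving $E \in \langle k(x)[n] \rangle$.

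The only mildly delicate point is verifying that $f$ and $g$ are genuinely nonzero morphisms in $\Db(X)$, which is handled by the cohomological check described above; all remaining steps are straightforward consequences of the slicing axioms and the artinian structure of coherent sheaves supported at a point.
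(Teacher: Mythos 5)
Your proof is correct, and it rests on the same structural input as the paper's — every coherent sheaf set-theoretically supported at $x$ has a finite composition series with factors $k(x)$, hence is $\sigma$-semistable of phase $\phi_0$ — but you extract the conclusion by a different route. The paper argues in one stroke: for an arbitrary $E \in \Db_x(X)$, the standard truncation filtration has factors $\cH^i(E)[-i]$, each semistable of phase $\phi_0 - i$, so this filtration is literally the Harder--Narasimhan filtration, and the formula for $\cP(\phi)$ falls out immediately. You instead prove the two inclusions separately, and for the harder one you take a semistable $E$, produce nonzero maps $E \to k(x)[-m]$ and $k(x)[-\ell] \to E$ via the extremal cohomology sheaves, and use the orthogonality axiom of the slicing to squeeze $\phi$ between $\phi_0 - \ell$ and $\phi_0 - m$, forcing $\ell = m$ and $\phi \in \phi_0 + \Z$. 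Both arguments work; the paper's is slightly shorter since it never needs to single out a semistable object. One cosmetic quibble: your justification that $\cP(\phi)$ is extension-closed (``the triangle is itself a trivial HN filtration'') is not literally right, because an HN filtration requires \emph{strictly} decreasing phases; the standard argument is that for a triangle $A \to E \to B$ with $A, B \in \cP(\phi)$ one has $\Hom(\cP(\psi), E) = 0$ for $\psi > \phi$ and $\Hom(E, \cP(\psi)) = 0$ for $\psi < \phi$, whence $\phi_\cP^+(E) \leq \phi \leq \phi_\cP^-(E)$. Since the paper itself uses extension-closedness of $\cP(\phi)$ in the form $\cP(I) = \langle \cP(\psi) \rangle_{\psi \in I}$, this does not affect the validity of your argument.
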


\begin{proof}
    Let $E \in \Db_x(X)$. 
    If $E$ has cohomological amplitude in $[a,b]$ with respect to the standard t-structure on $\Db_x(X)$, then 
    \begin{equation*}
        0 = \tau^{\leq a - 1}E \xrightarrow{f_a} \tau^{\leq a}E \xrightarrow{f_{a+1}} \tau^{\leq a +1}E \to \cdots \xrightarrow{f_{b}} \tau^{\leq b} = E
    \end{equation*}
    is a filtration of $E$ such that $\mathrm{cone}(f_i) \cong \cH^i(E)[-i]$.
    Since $\cH^i(E)$ is a sheaf supported at $x$, it admits a filtration with graded pieces isomorphic to the skyscraper sheaf $k(x)$, and hence is $\sigma$-semistable of phase $\phi_{\cP}(k(x))$. 
    Thus, 
    $\cH^i(E)[-i]$ is $\sigma$-semistable of phase $\phi_{\cP}(k(x))-i$, and the above filtration of $E$ is in fact the Harder--Narasimhan filtration with respect to $\sigma$. 
    This implies the claimed formula for the category $\cP(\phi)$ of $\sigma$-semistable objects of phase $\phi$. 
\end{proof}

\begin{rema}
    There is a version of Lemma~\ref{lemma-stability-DxX} which holds without the assumption that $k(x)$ is $\sigma$-semistable. 
    Since we only need the simpler special case for our purposes in this paper, we defer the general result to the forthcoming work \cite{LMPSZ:Deformation}. 
\end{rema}

\subsection{Proof of Theorem~\ref{thm:Main}}\label{subsec:Proof}

We will prove the following slightly more precise version of Theorem~\ref{thm:Main}. 

\begin{theo}\label{thm:Main-precise}
Let $X = C_1 \times \cdots \times C_n$, where $C_1, \dots, C_n$ are smooth projective curves of positive genus over an algebraically closed field $k$. 
For $i = 1,2$, let $v_i \colon  \rK_0(X) \to \Lambda_i$ be a numerical homomorphism to a finite rank free abelian group. 
Let $\sigma_1 = (Z_1, \cP_1) \in \Stab_{(\Lambda_1, v_1)}(\Db(X))$ and 
$\sigma_2=(Z_2,\cP_2)\in\Stab_{(\Lambda_2,v_2)}(\Db(X))$ be stability conditions such that: 
\begin{enumerate}
    \item $Z_1\circ v_1=Z_2\circ v_2$; and 
    \item for any closed point $x \in X$, there is an equality of phases $\phi_{\cP_1}(k(x)) = \phi_{\cP_2}(k(x))$. 
\end{enumerate} 
Then there is an equality of slicings $\cP_1 = \cP_2$. 
\end{theo}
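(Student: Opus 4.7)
The plan is to reduce the claim, by iterated restriction to fibers of the projections $f_k \colon X \to C_k$, to the case of stability conditions on the subcategory $\Db_x(X)$ of objects set-theoretically supported at a single closed point $x \in X$. As preparation, I note that since each $C_i$ has positive genus, the Abel--Jacobi map $C_i \to \Jac(C_i)$ is a closed embedding, so the Albanese morphism $X \to \prod_i \Jac(C_i)$ is finite; by~\cite{FLZ:Albanese}, every skyscraper sheaf $k(x)$ is thus $\sigma_i$-stable for $i=1,2$. Moreover, since each $v_i$ is numerical, so is the composition with the natural map $\rK_0(\Db_Y(X)) \to \rK_0(X)$ for any closed subset $Y \subset X$, and hence by Remark~\ref{remark-numerical-v-G-equivariant} the $\Pic^0$-equivariance hypothesis of Remark~\ref{rmk:RestInjective} is automatic throughout. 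The use of Remark~\ref{rmk:RestInjective}, rather than Proposition~\ref{prop:RestInjective} itself, is essential here because $\Lambda_1$ and $\Lambda_2$ are a priori different.

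The core argument is a finite induction. For $0 \le k \le n$ and a tuple $\underline{p} = (p_1,\ldots,p_k)$ of closed points $p_j \in C_j$, set
\[
X_{\underline{p}} \coloneqq \{p_1\} \times \cdots \times \{p_k\} \times C_{k+1} \times \cdots \times C_n \subset X,
\]
and let $\sigma_{i,\underline{p}}$ be the stability condition on $\Db_{X_{\underline{p}}}(X)$ obtained by iteratively applying Proposition~\ref{prop:Restriction} to $\sigma_i$ via $f_1,\ldots,f_k$ (each composed with the respective Abel--Jacobi embedding). By the functoriality of the construction, $Z_{i,\underline{p}} \circ v_{i,\underline{p}}$ is simply the restriction of $Z_i \circ v_i$ to $\rK_0(\Db_{X_{\underline{p}}}(X))$, so the two agree for $i=1,2$. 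I claim, by downward induction on $k$ from $k=n$ to $k=0$, that the slicings $\cP_{1,\underline{p}}$ and $\cP_{2,\underline{p}}$ coincide for every such $\underline{p}$; the case $k=0$ is exactly the theorem. For the inductive step, fix $\underline{p}$ of length $k$ and apply Remark~\ref{rmk:RestInjective} to the morphism $f_{k+1} \colon X \to C_{k+1}$ and the closed subset $Y = X_{\underline{p}}$: the fibers of $Y$ over closed points $p_{k+1} \in C_{k+1}$ are precisely the $X_{(\underline{p},p_{k+1})}$, so the hypotheses of the remark follow from the level-$(k+1)$ case together with the equality of composed central charges noted above.

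At the base level $k=n$, each $X_{\underline{p}}$ is a single closed point $x = (p_1,\ldots,p_n) \in X$. Since $k(x)$ is $\sigma_i$-stable and lies in $\Db_x(X)$, it is $\sigma_{i,x}$-semistable of phase $\phi_{\cP_{i,x}}(k(x)) = \phi_{\cP_i}(k(x))$; by assumption these two phases agree for $i=1,2$, so Lemma~\ref{lemma-stability-DxX} forces $\cP_{1,x} = \cP_{2,x}$, closing the induction. The one delicate point to watch throughout is the bookkeeping: one must verify at each level that the composed central charges agree on $\rK_0(\Db_{X_{\underline{p}}}(X))$ and that the restricted homomorphisms $v_{i,\underline{p}}$ remain numerical, despite $\Lambda_1$ and $\Lambda_2$ being genuinely different. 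Both facts are formal consequences of the definitions, so I do not anticipate any substantive mathematical obstacle beyond this.
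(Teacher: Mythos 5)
Your proof is correct and follows essentially the same route as the paper's: iterated restriction to fibers of the projections via Remark~\ref{rmk:RestInjective}, terminating at a closed point where Lemma~\ref{lemma-stability-DxX} and the $\sigma_i$-stability of skyscraper sheaves from \cite{FLZ:Albanese} apply. The only difference is organizational --- you run a direct downward induction establishing equality of the restricted slicings for \emph{all} tuples of points, whereas the paper argues by contradiction, adaptively choosing a single chain of points along which the slicings remain distinct.
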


Recall that by \cite{FLZ:Albanese}, for $X$ as above and any stability condition $\sigma \in \Stab_{(\Lambda, v)}(\Db(X))$, the skyscraper sheaves $k(x)$ of closed points $x \in X$ are all $\sigma$-stable of the same phase. 

\begin{proof}
Assume for sake of contradiction that $\cP_1 \neq \cP_2$. 
We claim that there exist closed points $p_j \in C_j$ for $j=1,\dots,n$ such that for each $i = 1,2$ and each $j$, the stability condition 
$\sigma_{i}$ restricts to a stability condition 
$\sigma_{i, p_1, \dots, p_j} = (Z_i, \cP_{i,p_1, \dots, p_j}) \in \Stab_{(\Lambda_i, v_{i, p_1, \dots, p_j})}(\Db_{Y_j}(X))$, where: 
\begin{itemize}
\item $Y_j \coloneqq \{p_1\}\times\dots\times\{p_j\}\times C_{j+1}\times\dots\times C_{n}$, 
\item $v_{i, p_1, \dots, p_j} \colon \rK_0(\Db_{Y_j}(X)) \to \rK_0(\Db(X)) \xrightarrow{ v_{i} } \Lambda_i$ is the natural composition, and 
\item $\cP_{i, p_1, \dots, p_j}(\phi) = \cP_i(\phi) \cap \Db_{Y_j}(X)$ for all $\phi \in \R$, and 
\item $\cP_{1, p_1, \dots, p_j} \neq \cP_{2, p_1, \dots, p_j}$. 
\end{itemize}
Indeed, we can inductively choose the points $p_j \in C_j$ by applying the amplification of Proposition~\ref{prop:RestInjective} from Remark~\ref{rmk:RestInjective} to the $j$th projection $X \to C_j$. 

Note that $Y_n$ is the closed point $x = (p_1, \dots, p_n) \in X$. 
Moreover, for $i = 1,2$, the skyscraper sheaf $k(x)$ is $\sigma_i$-stable of phase $\phi_{\cP_i}(k(x))$ by construction. 
Since $\phi_{\cP_1}(k(x)) = \phi_{\cP_2}(k(x))$, we conclude by Lemma~\ref{lemma-stability-DxX} that 
$\cP_{1, p_1, \dots, p_n} = \cP_{2, p_1, \dots, p_n}$, contradicting the fact that these slicings are distinct by construction. 
\end{proof}


\section{Hilbert schemes of points on surfaces}\label{subsec:Hilbert}

The goal of this section is to prove our second main result, Theorem~\ref{theorem-Hilbn}. 
The key ingredient is the construction from~\cite{LiuY:Products} of stability conditions on the product of a variety with a curve, which we review in Section~\ref{subsec:Products}. 
In Section~\ref{subsec:StabilityProductCurves}, we prove a strengthening (Theorem~\ref{thm:LiuY+}) of the support property for the resulting stability conditions on products of curves. 
This is needed for the proof of Theorem~\ref{theorem-Hilbn} in Section~\ref{section-proof-theorem-Hilbn}.

Throughout this section, we work over an algebraically closed field $k$, except in Section~\ref{section-proof-theorem-Hilbn} where we additionally assume that $\mathrm{char}(k)=0$ in order to apply the BKR equivalence. 

\subsection{Stability conditions on products}\label{subsec:Products}

In~\cite{LiuY:Products}, Bridgeland stability conditions have been constructed on the product of a variety with a curve, given a stability condition on the variety itself.
We briefly recall this construction.

Let $Y$ be a smooth projective variety over $k$ and let $\sigma=(Z,\cA)$ be a stability condition on $\Db(Y)$ with respect to a fixed pair $(\Lambda,v)$, such that the image of $Z$ is in $\Q\oplus\Q i$; in particular, by~\cite[Lemma~12.12]{BLMNPS:Family}, the heart $\cA$ is noetherian.
Let $C$ be a smooth projective curve over $k$ of genus $g\geq0$, and fix an ample line bundle $\cO_C(1)$ on $C$.
To construct stability conditions on $\Db(Y\times C)$, the idea in~\cite{LiuY:Products} is to use the base changed t-structure constructed in~\cite[Theorem 2.6.1]{AP:Sheaf} (see also~\cite{Pol:Family}). 
Namely, starting from the noetherian heart $\cA$, we obtain an induced bounded t-structure on $\Db(Y\times C)$, with noetherian heart given by
\[
\cA_C\coloneqq\left\{ E\in\Db(Y\times C) \st p_{Y*}(E\otimes p_C^*\cO_C(m))\in\cA \text{ for }m\gg0\right\},
\]
where $p_Y$ and $p_C$ denote the two projections.
Then, as in~\cite[Section 4]{LiuY:Products}, for $E\in\Db(Y\times C)$ we write
\begin{equation}\label{eq:Zsigma}
Z(p_{Y*}(E\otimes p_C^*\cO_C(m+g-1)))=a(E)m+b(E)+i(c(E)m+d(E)),
\end{equation}
for some group homomorphisms $a,b,c,d\colon\mathrm \rK_0(Y\times C)\to\R$.\footnote{We use a different normalization than~\cite{LiuY:Products}: we added the linear factor $g-1$ in~\eqref{eq:Zsigma}, as it simplifies the computations later. 
This keeps the functions $a$ and $c$ the same as in \cite{LiuY:Products}, but changes the functions $b$ and $d$ by the shifts $b\mapsto b+(1-g)a$ and $d\mapsto d+(1-g)c$. This does not affect any of the proofs.}
We consider the finite rank free abelian groups 
\begin{equation}\label{eq:LambdaC}
\Lambda_C\coloneqq\Lambda\oplus\Lambda\qquad  \text{and} \qquad \widetilde{\Lambda}_C\coloneqq\Lambda \oplus \left(\Lambda/\Ker(Z)\right),
\end{equation}
and define
\begin{equation}\label{eq:vC}
v_C=(v_{C}',v_{C}'')\colon \rK_0(Y\times C)\longrightarrow\Lambda_C,    
\end{equation}
where
\begin{equation}\label{eq:vLiu}
\begin{split}
& v_{C}'(E)\coloneqq v(p_{Y*}(E\otimes p_C^*\cO_C(m)))-v(p_{Y*}(E\otimes p_C^*\cO_C(m-1))),\\
& v_{C}''(E)\coloneqq v(p_{Y*}(E\otimes p_C^*\cO_C(m)))-(m+1-g)\cdot v_{C}'(E),
\end{split}
\end{equation}
for $E\in\Db(Y\times C)$, which are independent of $m\in\Z$.
We also define
\[
\widetilde{v}_C\colon \rK_0(Y\times C)\longrightarrow\widetilde{\Lambda}_C
\]
as the composition of $v_C$ with the projection onto $\widetilde{\Lambda}_C$.

For every $t\in\R_{>0}$ and $\beta\in\R$, we define $^{\prime\!}Z_C^{t,\beta}\colon\widetilde{\Lambda}_C\to\C$ by
\begin{equation*}
^{\prime\!}Z_C^{t,\beta}(E)\coloneqq a(E)t-d(E)+c(E)\beta+ic(E)t,
\end{equation*}
for $E\in\Db(Y\times C)$.
By~\cite[Theorem 3.3 and Lemma 4.3]{LiuY:Products}\footnote{In \cite{LiuY:Products} there is no parameter $\beta$, but again it does not change anything in the proofs.}, the pair $(^{\prime\!}Z_C^{t,\beta},\cA_C)$ is a \emph{weak stability condition} with respect to $(\widetilde{\Lambda}_C,\widetilde{v}_C)$ (see e.g.~\cite[Section 14.1]{BLMNPS:Family} for the basic definitions about weak stability conditions). 
In particular, the subcategories 
\begin{align*}
\cT_C^{t,\beta} & \coloneqq 
\left\{E\in\cA_C ~ \left| ~ \begin{array}{c}
\text{for every surjection }E\twoheadrightarrow F \text{ in }\cA_C,\, \text{either} \\ 
 c(F)=0, \text{ or } c(F)\neq 0\text{ and }\Re {^{\prime\!}Z}_C^{t,\beta}(E)<0
\end{array}
\right. \right\}, \\
\cF_C^{t,\beta} & \coloneqq 
\left\{E\in\cA_C ~ \left| ~ \begin{array}{c} 
\text{for every injection }F\hookrightarrow E\text{ in }\cA_C, \\ 
c(F)\neq 0\text{ and }\Re {^{\prime\!}Z}_C^{t,\beta}(E)\geq0 
\end{array}
\right. \right\},
\end{align*}
form a torsion pair in $\cA_{C}$, and we can define the tilted heart
\begin{equation}\label{eq:cALiu}
\cA_C^{t,\beta}\coloneqq\langle \cT_C^{t,\beta},\cF_C^{t,\beta}[1]\rangle
\end{equation}
as the extension closure of the subcategories $\cT_C^{t,\beta}$ and $\cF_C^{t,\beta}[1]$ in $\Db(Y \times C)$. 
For every $s\in\R_{>0}$, we define a new central charge $Z^{s,t,\beta}_C\colon\widetilde{\Lambda}_C\to\C$ by
\begin{equation}\label{eq:ZLiu}
Z^{s,t,\beta}_C(E)=b(E)+sc(E)-\beta a(E)+i(d(E)-ta(E)-\beta c(E))
\end{equation}
for $E\in\Db(Y\times C)$.

\begin{theo}[{\cite[Theorem 5.9]{LiuY:Products}}]\label{thm:LiuY}
For every $t \in \R_{>0}$, $\beta \in \R$, and $s \in \R_{>0}$, the pair $\sigma_C^{s,t,\beta}\coloneqq(Z^{s,t,\beta}_C,\cA_C^{t,\beta})$ is a 
stability condition on $\Db(Y\times C)$ with respect to $(\widetilde{\Lambda}_C,\widetilde{v}_C)$. 
\end{theo}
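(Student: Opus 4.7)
The plan will be to follow the classical two-tilt paradigm originating in \cite{Bridgeland:Stab} and refined in the Bogomolov--Gieseker constructions on surfaces and threefolds. The starting data, already recorded in Section~\ref{subsec:Products}, is that $(^{\prime\!}Z_C^{t,\beta}, \cA_C)$ is a weak stability condition on $\Db(Y\times C)$ and that $(\cT_C^{t,\beta}, \cF_C^{t,\beta})$ is a torsion pair in $\cA_C$, so that $\cA_C^{t,\beta} = \langle \cT_C^{t,\beta}, \cF_C^{t,\beta}[1]\rangle$ is automatically the heart of a bounded t-structure on $\Db(Y\times C)$.

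Three things then remain: that $Z_C^{s,t,\beta}$ is a stability function on $\cA_C^{t,\beta}$, that it satisfies the Harder--Narasimhan property, and that it satisfies the support property with respect to $(\widetilde{\Lambda}_C, \widetilde{v}_C)$. For the first step, the crucial identity is
\begin{equation*}
\Im Z_C^{s,t,\beta}(E) \;=\; d(E) - t\,a(E) - \beta\, c(E) \;=\; -\,\Re\, {^{\prime\!}Z}_C^{t,\beta}(E),
\end{equation*}
so that the imaginary part of the tilted central charge is the negative of the real part of the weak stability function before tilting. This makes $\Im Z_C^{s,t,\beta}\geq 0$ on $\cA_C^{t,\beta}$ immediate from the definition of the torsion pair, and the boundary analysis in the case $\Im Z_C^{s,t,\beta}(E) = 0$ is then handled by the parameter $s>0$, which enters the real part via the term $s\,c(E)$ and produces $\Re Z_C^{s,t,\beta}(E) < 0$ on the relevant objects. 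The HN property follows from the noetherianity of the tilted heart (inherited from that of $\cA_C$) together with discreteness of the image of the central charge, via the standard criterion of \cite[Lemma 12.12]{BLMNPS:Family}.

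The main obstacle is the support property. The plan is to produce a quadratic form $Q_C$ on $(\widetilde{\Lambda}_C)_\R$ of the shape
\begin{equation*}
Q_C(\bv) \;=\; \alpha \bigl(c(\bv)\,b(\bv) - a(\bv)\,d(\bv)\bigr) \;+\; \gamma_1\, Q_Y\!\bigl(v_{C}'(\bv)\bigr) \;+\; \gamma_2\, Q_Y\!\bigl(v_{C}''(\bv)\bigr),
\end{equation*}
where $Q_Y$ is a quadratic form on $\Lambda_\R$ witnessing the support property of $(Z,\cA)$ on $\Db(Y)$, and $\alpha, \gamma_1, \gamma_2$ are constants to be tuned in terms of $s,t,\beta$. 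Negative-definiteness of $Q_C$ on $\ker Z_C^{s,t,\beta}$ reduces to linear algebra once the relations among $a,b,c,d$ and $Z$ are unwound. Non-negativity of $Q_C$ on $\sigma_C^{s,t,\beta}$-semistable objects is the hard part and proceeds in two stages. First, establish a Bogomolov--Gieseker-type inequality for $^{\prime\!}Z_C^{t,\beta}$-semistable objects in $\cA_C$, by pushing forward along $p_Y$, twisting by $p_C^*\cO_C(m)$ for $m\gg 0$, and invoking the Bogomolov inequality provided by the support property of $(Z,\cA)$. Second, transfer this inequality to $\sigma_C^{s,t,\beta}$-semistable objects by induction on the length of their HN filtrations with respect to the weak stability, using a Hodge-index ``diamond'' estimate: if $Q_C$ is non-negative on both outer terms of a short exact sequence of weakly semistable objects and the central charges are aligned, then it is non-negative on the middle term. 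Organizing this inductive step so that the constants $\alpha,\gamma_1,\gamma_2$ can be chosen uniformly in the semistable class is where the bulk of the technical work lies.
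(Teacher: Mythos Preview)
The paper does not give its own proof of this theorem: it is quoted verbatim from \cite[Theorem~5.9]{LiuY:Products} and used as a black box, with only a remark that the added parameter $\beta$ does not affect Liu's arguments. There is therefore nothing in the present paper to compare your proposal against.

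That said, your outline is a faithful summary of the strategy carried out in \cite{LiuY:Products}. The key identity $\Im Z_C^{s,t,\beta} = -\Re\,{^{\prime\!}Z}_C^{t,\beta}$ is correct and is exactly what makes the tilt compatible with the new central charge; the HN property is obtained there via noetherianity of the tilted heart; and the support property is indeed the main technical point, handled in \cite[\S5]{LiuY:Products} by a Bogomolov--Gieseker-type inequality for ${^{\prime\!}Z}_C^{t,\beta}$-semistable objects in $\cA_C$ (their Theorem~5.3), then transferred to $\sigma_C^{s,t,\beta}$-semistable objects by the wall-crossing/induction mechanism you describe. One small caution on your proposed quadratic form: since $\widetilde{\Lambda}_C = \Lambda \oplus (\Lambda/\Ker Z)$, the second summand is a quotient, so a term of the shape $Q_Y(v_C''(\bv))$ is not literally well-defined on $(\widetilde{\Lambda}_C)_\R$; in Liu's argument the quadratic form is built somewhat differently (essentially from the discriminant $cb-ad$ together with $|Z|^2$-type terms), and you would need to adjust your ansatz accordingly if you actually carry this out.
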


Theorem~\ref{thm:LiuY} allows us to inductively construct stability conditions on products of curves.
We make this explicit in the next section, further expanding~\cite[Corollary 1.2]{LiuY:Products} to show the support property with respect to a larger lattice.

\subsection{The support property on products of curves}\label{subsec:StabilityProductCurves}

Let $\{C_n\}_{n\geq1}$ be a set of smooth projective curves over $k$, with genera $\{g_n\geq0\}_{n\geq1}$.
We fix ample line bundles $\{\cO_{C_n}(1)\}_{n\geq1}$ of degree~1 on each $C_n$.
Let
\[
X_n\coloneqq C_1\times\dots\times C_n.
\]
We will abuse notation and denote by $h_1,h_2,\dots$ the first numerical Chern classes of both $\cO_{C_1}(1),\cO_{C_2}(1),\dots$ and their pullbacks to $X_n$. 
We denote by $h_{1,\dots,n}$ the first numerical Chern class of the line bundle $\cO_{C_1}(1)\boxtimes\dots\boxtimes\cO_{C_n}(1)$, or equivalently
\[
h_{1,\dots,n} = h_1+\dots+h_n. 
\]

We define data $(\Lambda_n,v_n)$ inductively as follows.
We set
\[
\Lambda_1 \coloneqq \Z^{\oplus 2} 
\]
and
\[
v_1\colon \rK_0(X_1)\longrightarrow\Lambda_1 , \qquad E\longmapsto\left(\rk(E),\deg(E)\right).
\]
Thinking of $X_n$ as $X_{n-1}\times C_n$, we define
\[
\Lambda_n\coloneqq(\Lambda_{n-1})_{C_n}=\Lambda_{n-1}\oplus\Lambda_{n-1}
\]
in the notation~\eqref{eq:LambdaC}, and
\[
v_n=(v_{n-1})_{C_n}=(v_n',v_n'')\colon \rK_0(X_n)\longrightarrow\Lambda_n
\]
in the notation~\eqref{eq:vC}.

\begin{rema}
\label{remark-Lambdan}
    Sometimes it is convenient to 
    think of the recursive definition of $\Lambda_n$ as the formula $\Lambda_n = \Lambda_{n-1} \otimes \Lambda_1$. 
    Expanding, we can view $\Lambda_n$ as the tensor product 
    \begin{equation*}
        \Lambda_n \cong \Lambda_1^{\otimes n} \cong \bigoplus_{\substack{0 \leq \ell \leq n \\ 1 \leq j_1 < \cdots < j_{\ell} \leq n}} \Z^{\otimes \ell} \otimes \Z^{\otimes n-\ell} \cong \Z^{ \oplus 2^n}, 
    \end{equation*}
    where the second isomorphism is the ``binomial expansion'' of $(\Z \oplus \Z)^{\otimes n}$. 
\end{rema}

In these terms, we can explicitly describe the map $v_{n} \colon \rK_0(X_n) \to \Lambda_{n}$ from above.  

\begin{lemm}\label{lem:GRR}
For any $E \in \Db(X_n)$, under the isomorphism 
\begin{equation*}
    \Lambda_{n-1} \cong \bigoplus_{\substack{0 \leq \ell \leq n-1 \\ 1 \leq j_1 < \cdots < j_{\ell} \leq n-1}} \Z
\end{equation*}
from above, the components of $v'_n(E)$ and $v''_n(E)$ are given by
\begin{align}
&v_n'(E)=\left(h_{j_1}\dots h_{j_\ell}h_n\ch_{n-\ell-1}(E)\right)_{1\leq j_1<\dots<j_\ell \leq n-1}\label{eq:GRR1} ,\\
    &v_n''(E)=\left(h_{j_1}\dots h_{j_\ell}\ch_{n-\ell}(E)\right)_{1\leq j_1<\dots<j_\ell\leq n-1}\label{eq:GRR2}.   \end{align}
Equivalently, under the isomorphism 
\begin{equation*}
    \Lambda_{n} \cong 
    \bigoplus_{\substack{0 \leq \ell \leq n \\ 1 \leq j_{\ell} < \cdots < j_{\ell} \leq n}} \Z, 
\end{equation*}
the components of $v_n(E)$ are given by 
\begin{equation*}
    v_n(E) = (h_{j_1} \dots h_{j_{\ell}} \ch_{n-\ell}(E))_{1 \leq j_1 < \cdots < j_{\ell} \leq n}. 
\end{equation*}
\end{lemm}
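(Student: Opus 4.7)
The plan is to prove the formula by induction on $n$, using Grothendieck--Riemann--Roch along the projection $p \coloneqq p_{X_{n-1}} \colon X_n = X_{n-1} \times C_n \to X_{n-1}$ as the main computational tool, together with the vanishing $h_n^2 = 0$ on $X_n$.

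The base case $n=1$ will amount to unwinding the definition $v_1(E) = (\rk(E), \deg(E))$: under the identification $\Lambda_1 = \Z^{\oplus 2}$ via the summands indexed by $\emptyset$ and $\{1\}$, the empty-set component equals $\int_{C_1} \ch_1(E) = \deg(E)$ and the $\{1\}$-component equals $\int_{C_1} h_1 \ch_0(E) = \rk(E)$.

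For the inductive step, assume the formula for $v_{n-1}$, and set $q \coloneqq p_{C_n}$. Since the relative tangent bundle of $p$ is $q^*T_{C_n}$ with $\td(T_{C_n}) = 1 + (1-g_n)h_n$, Grothendieck--Riemann--Roch yields
\[
\ch\bigl(p_*(E \otimes q^*\cO_{C_n}(m))\bigr) = p_*\bigl(\ch(E) \cdot (1 + (m+1-g_n)h_n)\bigr).
\]
Via K\"unneth I will write each Chern character component as $\ch_k(E) = \gamma_k + \delta_{k-1} h_n$, with $\gamma_k, \delta_{k-1}$ pulled back from $X_{n-1}$ of degrees $k$ and $k-1$ respectively. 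Using $p_*(h_n) = 1$, that $p_*$ kills pullback classes, and $h_n^2 = 0$, a short computation gives
\[
\ch_j\bigl(p_*(E \otimes q^*\cO_{C_n}(m))\bigr) = \delta_j + (m+1-g_n)\gamma_j.
\]

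Substituting this into~\eqref{eq:vLiu} and invoking the inductive formula for $v_{n-1}$, the $m$-difference defining $v_n'$ cancels the $\delta$-contributions and produces, for each subset $\{j_1,\dots,j_\ell\} \subset \{1,\dots,n-1\}$, the component $\int_{X_{n-1}} h_{j_1}\cdots h_{j_\ell} \gamma_{n-\ell-1}$; by the projection formula and $h_n^2=0$ this equals $\int_{X_n} h_{j_1}\cdots h_{j_\ell} h_n \ch_{n-\ell-1}(E)$, proving~\eqref{eq:GRR1}. A parallel calculation shows that the affine correction $-(m+1-g_n)\cdot v_n'(E)$ in the definition of $v_n''$ exactly cancels the $\gamma$-contribution of $v_{n-1}(F_m)$, leaving $\int_{X_{n-1}} h_{j_1}\cdots h_{j_\ell}\delta_{n-\ell-1} = \int_{X_n} h_{j_1}\cdots h_{j_\ell}\ch_{n-\ell}(E)$ (the $\gamma_{n-\ell}$-piece of $\ch_{n-\ell}(E)$ dies upon integration since it is pulled back from $X_{n-1}$ in top degree), which is~\eqref{eq:GRR2}. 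The ``equivalently'' statement then follows from Remark~\ref{remark-Lambdan}: under the binomial decomposition $\Lambda_n = \Lambda_{n-1} \oplus \Lambda_{n-1}$, the $v_n'$-components correspond to subsets of $\{1,\dots,n\}$ containing $n$, and the $v_n''$-components to subsets not containing $n$. The only real obstacle is careful bookkeeping of degrees in the decomposition $\ch_k(E) = \gamma_k + \delta_{k-1}h_n$; all cancellations are forced by $h_n^2=0$ and the linearity of the Todd class of a curve.
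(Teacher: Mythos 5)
Your proposal is correct and follows essentially the same route as the paper: induction on $n$, with Grothendieck--Riemann--Roch along $p_{X_{n-1}}$ (and $\td(T_{C_n})=1+(1-g_n)h_n$, $h_n^2=0$) supplying the identification of the components of $v_n'$ and $v_n''$ with the stated Chern character integrals. The only cosmetic difference is that the paper obtains \eqref{eq:GRR1} by recognizing $v_n'(E)=v_{n-1}(E|_{X_{n-1}})$ rather than by your explicit K\"unneth bookkeeping, but the computations are equivalent.
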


\begin{proof}
The proof is by induction on $n$. 
The base case $n = 1$ holds by definition. 
For general $n$, by using the explicit expression from~\eqref{eq:vLiu} (with $m=0$), we have
\[
v_n'(E)=v_{n-1}\left(E|_{X_{n-1}}\right),
\]
which by induction implies the formula~\eqref{eq:GRR1}. 
To prove~\eqref{eq:GRR2}, we first note that again by~\eqref{eq:vLiu} (with $m=0$), we have 
\begin{equation*}
    v''_n(E) = v_{n-1}(p_{X_{n-1}*}(E)) - (1 -g_n) v'_n(E). 
\end{equation*}
On the other hand, by Grothendieck--Riemann--Roch we have
\begin{equation}\label{eq:GRR3}
\ch(p_{X_{n-1}*}(E))=p_{X_{n-1}*}\left(\ch(E)\td_{C_n}\right)=p_{X_{n-1}*}\left(\ch(E)(1+(1-g_n)h_n)\right).     
\end{equation}
Together with the above formula for $v''_n(E)$, by induction this implies the formula~\eqref{eq:GRR2}. 
\end{proof}

Next we explicate the central charges for the stability conditions we will consider. 
We fix $(B,\omega)\in\Q\times\Q_{>0}$ and define
\[
Z_n^{B,\omega}\colon\Lambda_n\longrightarrow\C
\]
to be the function so that 
\begin{equation}
\label{ZBomegan}
    Z^{B,\omega}_n(v_n(E)) = -\int_{X_n} e^{-(B+i\omega)h_{1,\dots,n}}\ch(E). 
\end{equation}
Note that by Lemma~\ref{lem:GRR}, there is indeed a natural function $Z_n^{B,\omega} \colon \Lambda_n \to \C$ determined by this property. 
When $n=1$, we get
\[
Z^{B,\omega}_1(E)=-\deg(E)+B\rk(E)+i\omega\rk(E).
\]
For $n \geq 2$, $Z_{n}^{B,\omega}$ agrees with the central charge defined inductively by the construction in Section~\ref{subsec:Products}, for suitable parameters: 

\begin{lemm}\label{lem:Zbw}
For $n \geq 2$, let $Z_{C_n}^{\omega,\omega,B}$ be the central charge defined in~\eqref{eq:ZLiu} for $s = t = \omega$ and $\beta = B$, using $Z = Z^{B,\omega}_{n-1}$ as the input central charge. 
Then we have $Z_{C_n}^{\omega,\omega,B}=Z^{B,\omega}_n$.
\end{lemm}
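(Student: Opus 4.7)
The plan is to compute both sides explicitly in terms of Chern characters, using Grothendieck--Riemann--Roch and the formula~\eqref{ZBomegan} for $Z_n^{B,\omega}$, and show that they match after a telescoping identification.

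First I would compute the pushforward $p_{X_{n-1}*}(E \otimes p_{C_n}^*\cO_{C_n}(m+g_n-1))$ at the level of Chern characters. By GRR, combined with $\td_{C_n} = 1 + (1-g_n)h_n$, the shift $m + g_n - 1$ conveniently cancels the Todd correction, yielding
\begin{equation*}
\ch\bigl(p_{X_{n-1}*}(E \otimes p_{C_n}^*\cO_{C_n}(m+g_n-1))\bigr) = p_{X_{n-1}*}\bigl(\ch(E)(1 + m h_n)\bigr).
\end{equation*}
Applying $Z_{n-1}^{B,\omega}$ to this class, using the projection formula and the definition~\eqref{ZBomegan} of $Z_{n-1}^{B,\omega}$, I get
\begin{equation*}
Z_{n-1}^{B,\omega}\bigl(p_{X_{n-1}*}(E \otimes p_{C_n}^*\cO_{C_n}(m+g_n-1))\bigr) = -\int_{X_n} e^{-(B+i\omega)h_{1,\dots,n-1}}(1 + m h_n)\ch(E).
\end{equation*}

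Reading off the coefficients of $m$ and the constant term in this expression, and separating real and imaginary parts as prescribed in~\eqref{eq:Zsigma}, I obtain
\begin{align*}
a(E) + i c(E) &= -\int_{X_n} e^{-(B+i\omega)h_{1,\dots,n-1}}\, h_n\, \ch(E), \\
b(E) + i d(E) &= -\int_{X_n} e^{-(B+i\omega)h_{1,\dots,n-1}}\, \ch(E).
\end{align*}
Now I substitute $s = t = \omega$, $\beta = B$ into~\eqref{eq:ZLiu}. A short regrouping shows that
\begin{equation*}
Z_{C_n}^{\omega,\omega,B}(E) = \bigl(b(E) + i d(E)\bigr) - (B + i\omega)\bigl(a(E) + i c(E)\bigr).
\end{equation*}

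Finally, plugging in the integral expressions for $a+ic$ and $b+id$ gives
\begin{equation*}
Z_{C_n}^{\omega,\omega,B}(E) = -\int_{X_n} e^{-(B+i\omega)h_{1,\dots,n-1}}\bigl(1 - (B+i\omega)h_n\bigr)\ch(E).
\end{equation*}
Since $h_n^2 = 0$, we have $1 - (B+i\omega)h_n = e^{-(B+i\omega)h_n}$, and combining the two exponential factors yields $e^{-(B+i\omega)h_{1,\dots,n}}$, which is precisely $Z_n^{B,\omega}(v_n(E))$ by~\eqref{ZBomegan}. The identification of this $\Lambda_n$-valued function with the one determined by Lemma~\ref{lem:GRR} is guaranteed by the way $Z_n^{B,\omega}$ factors through $\Lambda_n$. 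There is no serious obstacle here; the entire argument is bookkeeping, and the only conceptual point is the cancellation $\td_{C_n}(1+(m+g_n-1)h_n) = 1 + m h_n$ together with the truncation $h_n^2 = 0$, which together produce the clean exponential form.
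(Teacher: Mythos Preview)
Your proof is correct and follows essentially the same route as the paper's: both compute $a+ic$ and $b+id$ via GRR (exploiting that the $+g_n-1$ twist cancels $\td_{C_n}$), regroup~\eqref{eq:ZLiu} as $(b+id)-(B+i\omega)(a+ic)$, and then recombine the exponential. The only cosmetic difference is that the paper verifies $e^{-(B+i\omega)h_{1,\dots,n-1}}(1-(B+i\omega)h_n)=e^{-(B+i\omega)h_{1,\dots,n}}$ by expanding $h_{1,\dots,n}^\ell$, whereas you use $h_n^2=0$ to recognize $1-(B+i\omega)h_n=e^{-(B+i\omega)h_n}$ directly; these are equivalent.
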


\begin{proof}
For any $E\in\Db(X_{n-1}\times C_n)$, by~\eqref{eq:GRR3} we have
\[
\begin{split}
    Z^{B,\omega}_{n-1}(p_{X_{n-1}*}(E \, \otimes  & \, p_{C_n}^*\cO_{C_n}(m+g_n-1)))=\\
    &= -\int_{X_{n-1}}e^{-(B+i\omega)h_{1,\dots,n-1}}\ch(p_{X_{n-1}*}(E\otimes p_{C_n}^*\cO_{C_n}(m+g_n-1)))\\
      &= -\int_{X_{n-1}}e^{-(B+i\omega)h_{1,\dots,n-1}}\,p_{X_{n-1}*}\left(\ch(E)(1+mh_n)\right)\\
      &=-\int_{X_n} e^{-(B+i\omega)h_{1,\dots,n-1}}\ch(E)(1+m h_n).
\end{split}
\]
In the notation of~\eqref{eq:Zsigma}, we deduce that
\[
a(E)+ic(E)=-\int_{X_n} e^{-(B+i\omega)h_{1,\dots,n-1}}h_n\ch(E)
\]
and
\[
b(E)+id(E)=-\int_{X_n} e^{-(B+i\omega)h_{1,\dots,n-1}}\ch(E).
\]
Writing~\eqref{eq:ZLiu} with $s=t=\omega$ and $\beta=B$, we get
\[
\begin{split}
Z^{\omega,\omega,B}_C(E)&=-\left(B+i\omega\right) \left(a(E)+ic(E)\right) + \left(b(E)+id(E)\right)\\
    &=-\int_{X_n} e^{-(B+i\omega)h_{1,\dots,n-1}}(1-(B+i\omega)h_n)\ch(E)
\end{split}
\]
For $\ell \geq 0$ we have the relation 
\begin{equation*}
    h_{1,\dots,n}^\ell = h_{1,\dots,n-1}^\ell + \ell h_{1,\dots,n-1}^{\ell-1} h_n, 
\end{equation*} 
which implies 
\begin{equation*}
    e^{-(B+i\omega)h_{1,\dots,n}} = e^{-(B+i\omega)h_{1,\dots,n-1}}(1-(B+i\omega)h_n). 
\end{equation*}
Plugging into the previous equation for $Z^{\omega, \omega, B}_C(E)$, we find 
\begin{equation*}
   Z^{\omega,\omega,B}_C(E) =  -\int_{X_n} e^{-(B+i\omega)h_{1,\dots,n}}\ch(E) = Z_n^{B,\omega}(E), 
\end{equation*}
as claimed. 
\end{proof}

For $(B, \omega) \in \Q \times \Q_{>0}$, we now inductively define a stability condition $\sigma_n = (Z_{n}^{B, \omega}, \cA_n)$ on $\Db(X_n)$ whose central charge is the one considered above. 
For $n = 1$, we take $\cA_1 = \Coh(X_1)$ to be the category of coherent sheaves, so that $\sigma_1 = (Z_1^{B,\omega}, \cA_1)$ is the standard stability condition on the curve $X_1$. 
For $n \geq 2$, we define $\cA_n$ to be the tilted heart given by~\eqref{eq:cALiu} for $t = \omega$, $\beta = B$, using $\sigma_{n-1} = (Z_{n-1}^{B,\omega}, \cA_{n-1})$ as the input stability condition. 
Note that the assumption that $(B,\omega)$ is rational guarantees that $Z_{n-1}^{B,\omega}$ takes values in $\Q \oplus \Q i$, so that the construction from Section~\ref{subsec:Products} can indeed be applied. 
Moreover, Theorem~\ref{thm:LiuY} (combined with Lemma~\ref{lem:GRR} and Lemma~\ref{lem:Zbw}) shows that $\sigma_n^{B,\omega} = (Z_n^{B,\omega}, \cA_{n})$ is a stability condition. 

However, Theorem~\ref{thm:LiuY} only shows that $\sigma_n^{B,\omega} = (Z_n^{B,\omega}, \cA_{n})$ is a stability condition with respect to a quotient of the group $\Lambda_n$. 
This quotient is not preserved by many group actions --- including the symmetric group, relevant to our applications --- so it is not sufficient for our purposes. 
For this reason, we need the following extension of the support property to the whole group $\Lambda_n$. 

\begin{theo}\label{thm:LiuY+}
Assume that the curves $C_1, \dots, C_n$ all have positive genus. 
Then for any $(B,\omega)\in\Q\times\Q_{>0}$, the pair $\sigma^{B,\omega}_n=(Z^{B,\omega}_n,\cA_n)$ defined above is a stability condition on $\Db(X_n)$ with respect to $(\Lambda_n,v_n)$. 
Moreover, for any closed point $x \in X_n$, the skyscraper sheaf $k(x)$ is $\sigma^{B,\omega}_n$-stable of phase $1$. 
\end{theo}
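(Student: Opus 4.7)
The plan is to induct on $n$. The base case $n=1$ is immediate: $\sigma_1^{B,\omega}$ is the classical slope stability condition on the curve $C_1$ with heart $\Coh(C_1)$ and central charge $Z_1^{B,\omega}(r,d)=-d+Br+i\omega r$. Since $\omega>0$, we have $\Ker(Z_1^{B,\omega})=0$, so any quadratic form satisfies the support property, and every skyscraper sheaf $k(x)$ has class $(0,1)$, central charge $-1$, and is stable as a simple object of $\Coh(C_1)$.

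For the inductive step, assuming the theorem for $n-1$ with support-property quadratic form $Q_{n-1}$ on $(\Lambda_{n-1})_\R$, I would first invoke Theorem~\ref{thm:LiuY} (via Lemma~\ref{lem:Zbw}) to obtain that $\sigma_n^{B,\omega}=(Z_n^{B,\omega},\cA_n)$ is a stability condition on $\Db(X_n)$ with respect to the quotient pair $(\widetilde{\Lambda}_n,\widetilde{v}_n)$, with some quadratic form $\widetilde{Q}_n$. For the stability of skyscrapers, write $x=(x',p)\in X_{n-1}\times C_n$. The identity $p_{X_{n-1}*}(i_{p*}k(x')\otimes p_{C_n}^*\cO_{C_n}(m))=k(x')$ (independent of $m$) gives $a(k(x))=c(k(x))=0$ and $b(k(x))+i\,d(k(x))=Z_{n-1}^{B,\omega}(k(x'))=-1$ by induction. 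Since $c(k(x))=0$, a direct inspection of the torsion pair defining $\cA_n$ places $k(x)=i_{p*}k(x')$ in the tilted heart, with phase $1$. Stability of $k(x)$ in $\cA_n$ then follows from the inductive simplicity of $k(x')$ in $\cA_{n-1}$ via the base-change structure: any subobject must lie in the essential image of $i_{p*}$.

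The heart of the argument is upgrading the support property from $\widetilde{\Lambda}_n$ to $\Lambda_n$. Under $\Lambda_n=\Lambda_{n-1}\oplus\Lambda_{n-1}$, the projection $\pi\colon\Lambda_n\twoheadrightarrow\widetilde{\Lambda}_n$ has kernel $\{0\}\oplus K$, where $K=\Ker(Z_{n-1}^{B,\omega})\subset\Lambda_{n-1}$. I would define
\[
Q_n(v_1,v_2) \coloneqq \widetilde{Q}_n(\pi(v_1,v_2)) + \epsilon\,Q_{n-1}(v_2)
\]
for a sufficiently small $\epsilon>0$. The kernel $\Ker(Z_n^{B,\omega})\subset(\Lambda_n)_\R$ is the sum of a lift of $\Ker(Z_n^{B,\omega}|_{\widetilde{\Lambda}_n})$ and $\{0\}\oplus K$; on the former, $\widetilde{Q}_n\circ\pi$ is negative definite, and on $K$, $Q_{n-1}$ is negative definite by induction. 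A compactness argument on the unit sphere of this kernel then guarantees negative-definiteness of $Q_n$ there for small enough $\epsilon$.

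The hard part will be verifying $Q_n(v_n(E))\geq 0$ for $\sigma_n^{B,\omega}$-semistable $E$. The first term is non-negative by the inductive support property on $\widetilde{\Lambda}_n$, so the task reduces to a uniform bound $|Q_{n-1}(v_n''(E))|\leq C\,|Z_n^{B,\omega}(E)|^2$, or equivalently via~\eqref{eq:SupportPropertyEquiv} a uniform bound on the $K$-component of $v_n''(E)$ by the central charge. I expect the positive genus of $C_n$ to be essential here, entering through the $\Pic^0(C_n)$-invariance of $\sigma_n^{B,\omega}$ (Theorem~\ref{theorem-invariant-stability}) and paralleling the rigidity used in Section~\ref{sec:Support}. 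Concretely, the plan would be to analyze the Harder--Narasimhan filtration of the pushforward $p_{X_{n-1}*}(E\otimes p_{C_n}^*\cO_{C_n}(g_n-1))$—whose class in $\Lambda_{n-1}$ equals $v_n''(E)$—with respect to $\sigma_{n-1}^{B,\omega}$, applying the inductive bound to its factors and a Hodge-index-type estimate to control the cross-terms. Choosing $\epsilon$ small relative to the constants arising would then give $Q_n(v_n(E))\geq 0$.
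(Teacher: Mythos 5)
Your base case, the identification of the central charge via Lemma~\ref{lem:Zbw}, and the verification that skyscraper sheaves lie in $\cA_n$ with phase $1$ are all fine and consistent with the paper. The problem is the step you yourself flag as ``the hard part'': upgrading the support property from $\widetilde{\Lambda}_n$ to $\Lambda_n$. Your plan is to stay with the single decomposition $X_n = X_{n-1}\times C_n$, perturb the quadratic form to $Q_n = \widetilde{Q}_n\circ\pi + \epsilon\, Q_{n-1}$ on the second summand, and control the $\Ker(Z_{n-1}^{B,\omega})$-component of $v_n''(E)$ for $\sigma_n^{B,\omega}$-semistable $E$ by estimating the Harder--Narasimhan filtration of $p_{X_{n-1}*}(E\otimes p_{C_n}^*\cO_{C_n}(g_n-1))$. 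This is not an argument but a hope, and I do not believe it can be completed as stated: the support property for $\sigma_n^{B,\omega}$ on the quotient lattice gives you no control on the mass of the pushforward of a semistable object, so the classes of its HN factors in $\Lambda_{n-1}$ --- and in particular their components in $\Ker(Z_{n-1}^{B,\omega})$, which are invisible to all the central charges in your setup --- can a priori be arbitrarily large compared to $|Z_n^{B,\omega}(E)|$. No choice of $\epsilon$ fixes this; the missing directions of $\Lambda_n$ are genuinely not seen by the one product decomposition you use.

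The paper closes exactly this gap by a different mechanism, and it is the reason Theorem~\ref{thm:LiuY+} sits after Theorem~\ref{thm:Main}. One considers all $n$ decompositions $X_n \cong X_{\widehat r}\times C_r$ obtained by singling out each factor $C_r$ in turn. Each yields, via Theorem~\ref{thm:LiuY}, a stability condition $\sigma_{n,\widehat r}^{B,\omega}$ satisfying the support property with respect to a quotient $\widetilde{\Lambda}_{n,\widehat r}$ of $\Lambda_n$, and by Lemma~\ref{lem:GRR} the kernels of these $n$ quotient maps intersect trivially in $\Lambda_n$. The crucial point is that all these stability conditions have the same central charge $Z_n^{B,\omega}$ on $\Lambda_n$ and make skyscrapers stable of phase $1$, so Theorem~\ref{thm:Main-precise} --- which is where the positive-genus hypothesis actually enters, not through $\Pic^0(C_n)$-invariance as you suggest --- forces their hearts to coincide. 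One then invokes \cite[Lemma~7.5]{HaidenSung:EEE} (after \cite[Lemma~3.19]{OPT}) to assemble the $n$ support properties on the quotients into the support property on all of $\Lambda_n$. Without using the other $n-1$ projections and the uniqueness theorem to identify the resulting stability conditions, your induction cannot reach the full lattice.
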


\begin{proof}
We proceed by induction on $n$.
The base case $n=1$ is clear. 
Let $n\geq2$. 
We introduce the following notation: 
for $r=1,\dots,n$, we set
\[
X_{\widehat r}\coloneqq C_1\times \dots  \times \widehat{C_r} \times \dots\times C_n,
\]
the product of $n-1$ curves, obtained by omitting the factor $C_r$ in $X_n$.
By the inductive assumption, we have stability conditions $\sigma_{\widehat r}^{B,\omega}$ on $\Db(X_{\widehat r})$ with respect to $(\Lambda_{\widehat r},v_{\widehat r})$, 
where $(\Lambda_{\widehat r},v_{\widehat r})$ is defined analogously to $(\Lambda_{n-1},v_{n-1})$, using the curves $C_i$ for $1 \leq i \leq n, i \neq r$, in place of $1 \leq i \leq n-1$;  
e.g. $(\Lambda_{\widehat n},v_{\widehat n})=(\Lambda_{n-1},v_{n-1})$. 
Moreover, by the inductive assumption, 
skyscraper sheaves are $\sigma_{\widehat{r}}$-stable of phase~1 for all $r$. 

By Theorem~\ref{thm:LiuY}, each stability condition $\sigma_{\widehat r}^{B,\omega}$ gives a stability condition
\[
\sigma_{n,\widehat r}^{B,\omega}=(Z_{n, \widehat r}^{B,\omega},\cA_{n,\widehat r})
\]
on $\Db(X_n)$, with respect to the lattice prescribed in~\eqref{eq:LambdaC}: 
\[
\widetilde{\Lambda}_{n,\widehat r}\coloneqq(\widetilde{\Lambda_{\widehat r}})_{C_{r}}=\Lambda_{\widehat r}\oplus\left(\Lambda_{\widehat r}/\Ker(Z_{\widehat r}^{B,\omega})\right).
\]
Let us denote by $p_{\widehat r}$ the projection
\[
p_{\widehat r}\colon\Lambda_n\longtwoheadrightarrow\widetilde{\Lambda}_{n,\widehat r}. 
\]
Note that, by formula~\eqref{eq:GRR1} from Lemma~\ref{lem:GRR}, the composition
\[
q_{\widehat r}\colon\Lambda_n\longtwoheadrightarrow \widetilde{\Lambda}_{n,\widehat r}\longtwoheadrightarrow\Lambda_{\widehat r}
\]
satisfies
\[
q_{\widehat r}(v_n(E))=\left(h_{j_1}\dots h_{j_\ell}h_r\ch_{n-\ell-1}(E)\right)_{\substack{{1\leq j_1<\dots<j_\ell\leq n}\\ {j_1,\dots,j_\ell \neq r}}}.
\]
Since furthermore $p_{\widehat r}(v_n(k(x)))\neq0$ for $x\in X_n$, we deduce that
\begin{equation}\label{eq:OPT}
\bigcap_{r=1,\dots,n} \ker(p_{\widehat r}) = \{0\}.   
\end{equation}

To conclude the proof, the key point is that the central charges of the $\sigma_{n,\widehat r}^{B,\omega}$ are compatible over $r$. Namely, by Lemma~\ref{lem:Zbw}, for any $r$ we have 
\[
Z_n^{B,\omega} = Z_{n, \widehat r}^{B,\omega} \circ p_{\widehat r}.
\]
Moreover, skyscraper sheaves are $\sigma_{n,\widehat r}^{B,\omega}$-stable and by~\eqref{eq:cALiu} still of phase $1$. 
Hence, we can apply Theorem~\ref{thm:Main-precise} 
to deduce that the hearts $\cA_{n,\widehat{r}}$ of the stability conditions $\sigma_{n,\widehat r}^{B,\omega}$ coincide for all $r=1,\dots,n$. 
Combined with~\eqref{eq:OPT}, this shows that all the assumptions of~\cite[Lemma~7.5]{HaidenSung:EEE} (which was in turn inspired by~\cite[Lemma 3.19]{OPT}) are satisfied\footnote{Technically, \cite[Lemma 7.5]{HaidenSung:EEE} is stated in the case where the numerical Grothendieck group is the enlarged lattice to which we want to lift the support property; here we instead use the statement for $\Lambda_n$, but the same proof works directly in our situation.} and we deduce that the support property holds for $\sigma^{B,\omega}_n$ with respect to $(\Lambda_n,v_n)$, as we wanted.
\end{proof}

\subsection{Proof of Theorem~\ref{theorem-Hilbn}}
\label{section-proof-theorem-Hilbn}

\eqref{Hilbn-C1C2} Let $C_1$ and $C_2$ be smooth projective curves of positive genus over an algebraically closed field $k$ of characteristic $0$, as in part~\eqref{Hilbn-C1C2} of Theorem~\ref{theorem-Hilbn}. 
Let $S = C_1 \times C_2$ be their product, and consider the $n$-fold product of this surface, $X_{2n} = S^n$. 
Fix degree $1$ line bundles $\cO_{C_1}(1)$ and $\cO_{C_2}(1)$ on $C_1$ and $C_2$. 
We use these line bundles in the construction of Theorem~\ref{thm:LiuY+}, which for any $(B, \omega) \in \Q \times \Q_{>0}$ gives a stability condition $\sigma^{B,\omega}_{2n} = (Z^{B,\omega}_{2n}, \cA_{2n})$ on $\Db(X_{2n})$ with respect to $(\Lambda_{2n}, v_{2n})$. 

The group $\fS_n$ acts on $X$ permuting the $n$ factors of $S$. 
By definition (see Remark~\ref{remark-Lambdan}), we can write $\Lambda_{2n} = (\Lambda_1 \otimes \Lambda_1)^{\otimes n}$, where $\Lambda_1 = \Z \oplus \Z$. 
If we endow $\Lambda_{2n}$ with the $\fS_n$-action swapping the $n$ factors of $\Lambda_1 \otimes \Lambda_1$, then it follows from 
Lemma~\ref{lem:GRR} that $v_{2n} \colon \rK_0(X_{2n}) \to \Lambda_{2n}$ is $\fS_{n}$-equivariant. 
Moreover, it follows from the defining  formula~\eqref{ZBomegan} that $Z_{2n}^{B,\omega}$ is $\fS_n$-invariant. 
Thus, by Corollary~\ref{cor:Invariance}, we find that the stability condition $\sigma^{B,\omega}_{2n} \in \Stab_{(\Lambda_{2n}, v_{2n})}(\Db(X_{2n}))$ is $\fS_n$-invariant. 
Therefore, we may apply Theorem~\ref{theorem-G-invariant-stability} to obtain an induced stability condition 
$(\sigma_{2n}^{B,\omega})^{\fS_n}$ on $\Db([X_{2n}/\fS_n])$ with respect to a homomorphism $v_{2n}^{\fS_n} \colon \rK_0([X_{2n}/\fS_{n}]) \to \Lambda_{2n}^{\fS_n}$. 

On the other hand, by the BKR correspondence for Hilbert schemes \cite{BKR:MukaiMcKay,Hai:Hilb}, we have a derived equivalence
\[
\Db\left(\Hilb^n(S)\right)\cong\Db\left([X_{2n}/\fS_n]\right).
\]
Since we have already constructed a stability condition on the right-hand side, this completes the proof of part~\eqref{Hilbn-C1C2} of Theorem~\ref{theorem-Hilbn}. 

\eqref{Hilbn-Kum} Now let $E_1$ and $E_2$ be elliptic curves as in part~\eqref{Hilbn-Kum} of Theorem~\ref{theorem-Hilbn}. 
As above, if we choose degree $1$ line bundles $\cO_{E_1}(1)$ and $\cO_{E_2}(1)$ and set $A = E_1 \times E_2$ and $X_{2n} = A^n$, then we obtain for any $(B,\omega) \in \Q \times \Q_{>0}$ a stability condition $\sigma_{2n}^{B,\omega}$ on $\Db(X_{2n})$ with respect to $(\Lambda_{2n}, v_{2n})$. 
This time, instead of $\fS_n$, we consider the action of the group $\bm{\mu}_2^n \rtimes\fS_n$ on $X_{2n}$, where $\fS_n$ acts by permuting the $n$ factors of $A$ and the $i$th factor of $\bm{\mu}_2$ acts by $-1$ on the $i$th factor of $A$. 
We endow $\Lambda_{2n} = (\Lambda_1 \otimes \Lambda_1)^{\otimes n}$ with the $\bm{\mu}_2^n \rtimes\fS_n$ action where $\fS_n$ acts by permuting the $n$ factors of $\Lambda_1 \otimes \Lambda_1$ and $\bm{\mu}_2^n$ acts trivially.  Then it follows from Lemma~\ref{lem:GRR} that $v_{2n} \colon \rK_0(X_{2n}) \to \Lambda_{2n}$ and is $\bm{\mu}_2^n \rtimes S_{n}$-equivariant and from~\eqref{ZBomegan} that $Z_{2n}^{B,\omega}$ is $\bm{\mu}_2^n \rtimes\fS_n$-invariant. 
Hence, as above, we obtain an induced stability condition 
$(\sigma_{2n}^{B, \omega})^{\bm{\mu}_2^n \rtimes\fS_n}$ on $\Db([X_{2n}/\bm{\mu}_2^n \rtimes\fS_n])$ with respect to a homomorphism $v_{2n}^{\bm{\mu}_2^n \rtimes\fS_n} \colon \rK_0([X_{2n}/\bm{\mu}_2^n \rtimes\fS_n]) \to \Lambda_{2n}^{\bm{\mu}_2^n \rtimes\fS_n} = \Lambda_{2n}^{\fS_n}$. 

On the other hand, if $S = \Kum(A)$ is the Kummer K3 surface of $A$, then again by the BKR correspondence, we have derived equivalences 
\begin{equation*}
    \Db(S) \simeq \Db([A/\bm{\mu}_2]) 
    \quad \text{and} \quad 
    \Db(\Hilb^n(S)) \simeq \Db([S^{n}/\fS_n]). 
\end{equation*} 
These combine to an equivalence 
\begin{equation*} 
\Db(\Hilb^n(S)) \simeq \Db([X_{2n}/\bm{\mu}_2^n \rtimes\fS_n]). 
\end{equation*} 
Since we have already constructed a stability condition on the right-hand side, this completes the proof of part~\eqref{Hilbn-Kum} of Theorem~\ref{theorem-Hilbn}. \qed

\begin{rema}\label{rmk:InvariantLatticeHilb}
As the above proof shows, in the situation of Theorem~\ref{theorem-Hilbn}, the stability conditions we produced are with respect to a homomorphism $v \colon \rK_0(\Hilb^n(S)) \to \Lambda_{2n}^{\fS_n}$.  
\end{rema}



\begin{thebibliography}{MMS+}

\bibitem[AP]{AP:Sheaf} Abramovich, D., Polishchuk, A., Sheaves of t-structures and valuative criteria for stable complexes, {\it J. Reine Angew. Math.} {\bf 590} (2006), 89--130.

\bibitem[AF+]{AF+:Quantum} Alexandrov, S., Feyzbakhsh, S., Klemm, A., Pioline, B., Schimannek, T., Quantum geometry, stability and modularity, {\it Commun. Number Theory Phys.} {\bf 18} (2024), 49--151.

\bibitem[AB+]{AB+:MMP} Arcara, D., Bertram, A., Co\c{s}kun, \.{I}., Huizenga, J., The minimal model program for the Hilbert scheme of points on $\mathbb{P}^2$ and Bridgeland stability, {\it Adv. Math.} {\bf 235} (2013), 580--626.

\bibitem[B]{Bayer:ShortProof} Bayer, A., A short proof of the deformation property of Bridgeland stability conditions, {\it Math. Ann.} {\bf 375} (2019), 1597--1613.

\bibitem[BL+]{BLMNPS:Family} Bayer, A., Lahoz, M., Macr\`i, E., Nuer, H., Perry, A., Stellari, P., Stability conditions in families, {\it Publ. Math. Inst. Hautes \'Etudes Sci.} {\bf 133} (2021), 157--325.

\bibitem[BM]{BM:MMP} Bayer, A., Macr\`i, E., MMP for moduli of sheaves on K3s via wall-crossing: nef and movable cones, Lagrangian fibrations, {\it Invent. Math.} {\bf 198} (2014), 505--590.

\bibitem[BMS]{BMS:StabCY3s} Bayer, A., Macr\`i, E., Stellari, P., The space of stability conditions on abelian threefolds, and on some Calabi--Yau threefolds, {\it Invent. Math.} {\bf 206} (2016), 869--933.

\bibitem[Br]{Bridgeland:Stab} Bridgeland, T., Stability conditions on triangulated categories, {\it Ann. of Math. (2)} {\bf 166} (2007), 317--345.

\bibitem[BKR]{BKR:MukaiMcKay} Bridgeland, T., King, A., Reid, M., The McKay correspondence as an equivalence of derived categories, {\it J. Amer. Math. Soc.} {\bf 14} (2001), 535--554.

\bibitem[BS]{BS:Quadratic} Bridgeland, T., Smith, I., Quadratic differentials as stability conditions, {\it Publ. Math. Inst. Hautes \'Etudes Sci.} {\bf 121} (2015), 155--278.

\bibitem[C]{ChengY:Kummer} Cheng, Y., Bridgeland stability conditions on some higher-dimensional Calabi--Yau manifolds and generalized Kummer varieties, eprint {\tt arXiv:2510.22432}.

\bibitem[D]{dell-stability-quotients} Dell, H., Stability conditions on free abelian quotients, {\it \'Epijournal G\'eom. Alg\'ebrique} {\bf 9} (2025), Art. 16, 39 pp.

\bibitem[F]{Fey:Mukai} Feyzbakhsh, S., Mukai's program (reconstructing a K3 surface from a curve) via wall-crossing, {\it J. Reine Angew. Math.} {\bf 765} (2020), 101--137.

\bibitem[FK+]{FK+:CY3} Feyzbakhsh, S., Koseki, N., Liu, Z., Rekuski, N., Stability conditions on Calabi--Yau threefolds via Brill--Noether theory of curves, eprint {\tt arXiv:2509.24990}.

\bibitem[FT]{FT:DT} Feyzbakhsh, S., Thomas, R., Rank~$r$ DT theory from rank~1, {\it J. Amer. Math. Soc.} {\bf 36} (2023), 795--826.

\bibitem[FLZ]{FLZ:Albanese} Fu, L., Li, C., Zhao, X., Stability manifolds of varieties with finite Albanese morphisms, {\it Trans. Amer. Math. Soc.} {\bf 375} (2022), 5669--5690.

\bibitem[HKK]{HKK:Flat} Haiden, F., Katzarkov, L., Kontsevich, M., Flat surfaces and stability structures, {\it Publ. Math. Inst. Hautes \'Etudes Sci.} {\bf 126} (2017), 247--318.

\bibitem[HP]{PI-abelian-3fold} Hotchkiss, J., Perry, A., The period-index conjecture for abelian threefolds and Donaldson--Thomas theory, eprint {\tt arXiv:2405.03315}.

\bibitem[HS]{HaidenSung:EEE} Haiden, F., Sung, B., The stability manifold of $E{\times} E{\times} E$, eprint {\tt arxiv/2410.08028} (2024).

\bibitem[H]{Hai:Hilb} Haiman, M., Hilbert schemes, polygraphs and the Macdonald positivity conjecture, {\it J. Amer. Math. Soc.} {\bf 14} (2001), 941--1006.

\bibitem[KS]{KS:Stability} Kontsevich, M., Soibelman, Y., Stability structures, motivic Donaldson--Thomas invariants and cluster transformations, eprint {\tt arXiv:0811.2435}.

\bibitem[L]{Li:CY3} Li, C., On stability conditions for the quintic threefold, {\it Invent. Math.} {\bf 218} (2019), 301--340.

\bibitem[LM+]{LMPSZ:Deformation} Li, C., Macr\`i, E., Perry, A., Stellari, P., Zhao, X., Deformations of stability conditions, in preparation (2025).

\bibitem[LZ]{LZ:MMP} Li, C., Zhao, X., Birational models of moduli spaces of coherent sheaves on the projective plane, {\it Geom. Topol.} {\bf 23} (2019), 347--426.

\bibitem[LiuY]{LiuY:Products} Liu, Y., Stability conditions on product varieties, {\it J. Reine Angew. Math.} {\bf 770} (2021), 135--157.

\bibitem[LiuZ]{LiuZ:Castelnuovo} Liu, Z., Castelnuovo bound for curves in projective 3-folds, eprint {\tt arXiv:2407.20161}.

\bibitem[LR]{LR:Castelnuovo} Liu, Z., Ruan, Y., Castelnuovo bound and higher genus Gromov--Witten invariants of quintic 3-folds, eprint {\tt arXiv:2210.13411}.

\bibitem[MP]{MP:Abelian} Maciocia, A., Piyaratne, D., Fourier--Mukai transforms and Bridgeland stability conditions on abelian threefolds, {\it Algebr. Geom.} {\bf 2} (2015), 270--297.

\bibitem[MMS]{MMS:Inducing} Macr\`i, E., Mehrotra, S., Stellari, P., Inducing stability conditions, {\it J. Algebraic Geom.} {\bf 18} (2009), 605--649.

\bibitem[MPS]{MPS:Deformation} Macr\`i, E., Perry, A., Stellari, P., Deformations and moduli of t-structures, in preparation (2025).

\bibitem[OPT]{OPT} Oberdieck, G.,  Piyaratne, D., Toda, Y., 
Donaldson--Thomas invariants of abelian threefolds and Bridgeland stability conditions, {\it J. Algebraic Geom.} {\bf 31} (2022), 13--73.

\bibitem[PPZ]{enriques-category-stability} Perry, A., Pertusi, L., Zhao, X., Moduli spaces of stable objects in Enriques categories, eprint {\tt arXiv:2305.10702}.

\bibitem[P1]{Pol:Abelian} Polishchuk, A., {\it Abelian varieties, theta functions and the Fourier transform}, Cambridge Tracts in Math. {\bf 153},
Cambridge University Press, Cambridge, 2003.

\bibitem[P2]{Pol:Family} \bysame, Constant families of t-structures on derived categories of coherent sheaves, {\it Mosc. Math. J.} {\bf 7} (2007), 109--134, 167.

\bibitem[StP]{stacks-project} The Stacks Project Authors, {\it The Stacks project}, 2025, available at \url{http://stacks.math.columbia.edu}.

\end{thebibliography}
\end{document}